\documentclass[11pt]{amsart}
\usepackage[left=1.25in,right=1.25in]{geometry}
\usepackage{amsthm}
\usepackage{amsmath}
\usepackage{mathtools}
\usepackage{amssymb}
\usepackage{tikz-cd}
\usepackage{tikz}
\usepackage{array}
\usetikzlibrary{decorations.markings}
\usetikzlibrary{patterns}
\usepackage{enumitem}
\usepackage{amsfonts}
\usepackage{todonotes}
\usepackage[utf8]{inputenc}
\usepackage{ytableau}
\usepackage{subcaption}
\usepackage{caption}
\usepackage[export]{adjustbox}
\usepackage{mathdots}
\usepackage{comment}
\usepackage{hyperref}
\usepackage{cleveref}
\usetikzlibrary{calc, shapes, backgrounds,arrows,positioning,decorations.pathmorphing}

\newtheorem{theorem}{Theorem}[section]
\newtheorem{prop}[theorem]{Proposition}

\newtheorem{lemma}[theorem]{Lemma}
\newtheorem{cor}[theorem]{Corollary}

\theoremstyle{definition}
\newtheorem{ex}[theorem]{Example}

\newtheorem{defin}[theorem]{Definition}
\newtheorem{remark}[theorem]{Remark}

\theoremstyle{plain}

\captionsetup[subfigure]{labelformat=simple}

\makeatletter
\newcommand\circled[1]{%
  \mathpalette\@circled{#1}%
}
\newcommand\@circled[2]{%
  \tikz[baseline=(math.base)] \node[draw,circle,inner sep=1pt] (math) {$\m@th#1#2$};%
}
\newcommand{\gr}{\mathrm{Gr}}
\newcommand{\RNum}[1]{\uppercase\expandafter{\romannumeral #1\relax}}
\makeatother
\newcommand{\shiliang}[1]{{\color{blue} \sf Shiliang: [#1]}}
\newcommand{\jiyang}[1]{{\color{red} \sf Jiyang: [#1]}}

\DeclareMathOperator{\wt}{wt}
\DeclareMathOperator{\depth}{depth}

\title{Quantum Bruhat graphs and tilted Richardson varieties}

\author{Jiyang Gao}
\address{Department of Mathematics, Harvard University, Cambridge, MA 02138}
\email{\href{mailto:jgao@math.harvard.edu}{{\tt jgao@math.harvard.edu}}}

\author{Shiliang Gao}
\address{Department of Mathematics, University of Illinois Urbana-Champaign, Urbana, IL 61801}
\email{\href{mailto:sgao23@illinois.edu}{{\tt sgao23@illinois.edu}}}

\author{Yibo Gao}
\address{Beijing International Center for Mathematical Research, Peking University, Beijing 100084}
\email{\href{mailto:gaoyibo@bicmr.pku.edu.cn}{{\tt gaoyibo@bicmr.pku.edu.cn}}}

\date{\today}

\begin{document}
\begin{abstract}
Quantum Bruhat graph is a weighted directed graph on a finite Weyl group first defined by Brenti-Fomin-Postnikov. It encodes quantum Monk's rule and can be utilized to study the $3$-point Gromov-Witten invariants of the flag variety. In this paper, we provide an explicit formula for the minimal weights between any pair of permutations on the quantum Bruhat graph, and consequently obtain an Ehresmann-like characterization for the tilted Bruhat order. Moreover, for any ordered pair of permutations $u$ and $v$, we define the tilted Richardson variety $\mathcal{T}_{u,v}$, with a stratification that gives a geometric meaning to intervals in the tilted Bruhat order. We provide a few equivalent definitions to this new family of varieties that include Richardson varieties, and establish some fundamental geometric properties including their dimensions and closure relations. 
\end{abstract}
\maketitle
\tableofcontents
\section{Introduction}\label{sec:intro}
Hilbert's fifteenth problem, Schubert calculus, concerns the full flag variety \[\mathrm{Fl}_n=\{0\subset V_1\subset \cdots\subset V_{n-1}\subset \mathbb{C}^n\:|\: \dim V_i=i\text{ for }i=1,\ldots,n-1\}\]
and its Schubert decomposition $\mathrm{Fl}_n=\bigsqcup_{w\in S_n} X_w^{\circ}$. The cohomology ring $H^*(\mathrm{Fl}_n)$ has a linear basis given by the Schubert varieties $\{[X_w]\}_{w\in S_n}$. The corresponding structure constants $c_{u,w}^v$'s, also referred to as the \emph{generalized Littlewood-Richardson numbers}, are known to be nonnegative from transversal intersection. It has been a long standing open problem to find a combinatorial interpretation of them. The study of flag varieties, Schubert varieties and these structure constants is central in algebraic geometry and algebraic combinatorics.

The small quantum cohomology ring $QH^*(\mathrm{Fl}_n)$ is a deformation of the cohomology ring. It shows up naturally in theoretical physics. The structure constants of $QH^*(\mathrm{Fl}_n)$ with respect to the Schubert basis are known as the \emph{3-point genus-0 Gromov-Witten invariants}. They extend the generalized Littlewood-Richardson numbers in ``quantum" direction. 

The problem of multiplying Schubert classes in $QH^*(\mathrm{Fl}_n)$ can be naturally encoded via the \emph{quantum Bruhat graph}, first defined by Brenti-Fomin-Postnikov \cite{BFP-tilted-Bruhat} and utilized by Postnikov \cite{Postnikov-quantum-Bruhat-graph}. The quantum Bruhat graph can be seen as a graphical representation of the \emph{quantum Monk's rule} and enjoys very rich algebraic and combinatorial properties. In particular,  the minimal degree $q^d$ that appears in the quantum product $[X_u]*[X^v]$ is the weight of any shortest directed path from $u$ to $v$ \cite{Postnikov-quantum-Bruhat-graph}. The quantum Bruhat graph directly gives rise to the \emph{tilted Bruhat order} \cite{BFP-tilted-Bruhat}, and these are our main combinatorial objects of interest for this paper.

\

\textbf{Main result 1:} weights in the quantum Bruhat graph:
\begin{enumerate}
\item An explicit combinatorial formula for the minimal weight between any pair of permutations $u$ to $v$ (Theorem~\ref{thm:weight-distance}).
\item An Ehresmann-like characterizaiton for the tilted Bruhat order (Theorem~\ref{thm:tilted-criterion}).
\end{enumerate}

\

We remark that Theorem~\ref{thm:weight-distance} was also obtained via a combination of Postnikov's toric Schur polynomials \cite{Postnikov-quantum-Schur} on the quantum cohomology ring of the Grassmannian, and a geometric result by Buch-Chung-Li-Mihalcea \cite{bclm2020-quantumK}. See also \cite{FW}. Our proof is independent and combinatorial. 

While weights on the quantum Bruhat graph encode important information in the quantum cohomology of the flag variety, we present a novel geometric interpretation of intervals in the tilted Bruhat order with a more classical flavor. For any $u,v\in S_n$, we define the \emph{tilted Richardson variety} $\mathcal{T}_{u,v}$ and the \emph{open tilted Richardson variety} $\mathcal{T}_{u,v}^{\circ}$ which reduce to the well-known (open) Richardson variety if $u\leq v$ in the Bruhat order. 

\

\textbf{Main result 2:} definitions of the (open) tilted Richardson variety using
\begin{enumerate}
\item rank conditions (\Cref{def:main});
\item cyclically rotated Richardson varieties in the Grassmannian (\Cref{thm:altdefT});
\item multi-Pl\"ucker coordinates (\Cref{thm:altdefTplucker}).
\end{enumerate}

The (open) tilted Richardson varieties are our central geometric objects of study. Their geometric properties resemble those of Richardson varieties. However, since an analogue of Schubert varieties do not exist in our setting, most of the analysis requires new insights and techniques. 

\

\textbf{Main result 3:} geometric properties of the (open) tilted Richardson variety:
\begin{enumerate}
\item a stratification $\mathcal{T}_{u,v}=\sqcup_{[x,y]\subset[u,v]}\mathcal{T}_{x,y}^{\circ}$ that relates tilted Bruhat order (\Cref{thm:Tunion});
\item $\dim\mathcal{T}_{u,v}=\dim\mathcal{T}_{u,v}^{\circ}=\ell(u,v)$ in the quantum Bruhat graph (\Cref{thm:uv});
\item the closure relation $\overline{\mathcal{T}_{u,v}^{\circ}}=\mathcal{T}_{u,v}$ (\Cref{thm:closure}).
\end{enumerate}

\

In a sequel, we connect tilted Richardson varieties with curve neighborhoods $\Gamma_{d}(X_u,X^v)$. These are subvarieties of $\mathrm{Fl}_n$ introduced by Buch-Chaput-Mihalcea-Perrin in \cite{curveneighborhood} (see also \cite{BCMP}, \cite{BM15}, \cite{LiMihalcea} and references therein). They are closely related to computations in $QH^{*}(\mathrm{Fl}_n)$ and $QK^*(\mathrm{Fl}_n)$.

Our paper is organized as follows. In \Cref{sec:prelim}, we introduce background information on quantum Bruhat graphs, tilted Bruhat orders, root systems, Grassmannians, flag varieties, Richardson varieties and Pl\"ucker coordinates. In Section~\ref{sec:graph}, \ref{sec:tilted-Richardson-definition} and \ref{sec:tilted-Richardson-geometry}, we establish each aforementioned main results respectively on the combinatorics of quantum Bruhat graphs, definitions of the (open) tilted Richardson varieties, and their geometry.
\section{Preliminaries}\label{sec:prelim}
Let $S_n$ be the symmetric group on $n$ elements, generated by the \emph{simple transpositions} $S=\{s_i:=(i\ i{+}1)\text{ for }i=1,\ldots,n-1\}$. We typically write a permutation $w$ using its \emph{one-line} notation $w(1)w(2)\cdots w(n)$ or simplified as $w_1w_2\cdots w_n$. For $w\in S_n$, let $\ell(w)$ be the \emph{Coxeter length} of $w$, which is the smallest $\ell$ such that $w=s_{i_1}\cdots s_{i_{\ell}}$ is a product of $\ell$ simple transpositions. Such an expression is called a \emph{reduced word} of $w$. Let $R(w)$ be the set of reduced words of $w$. Let $I(w)=\{(i,j)\:|\: i<j,w_i>w_j\}$ be the \emph{inversion set} of $w$. It is well-known that $\ell(w)$ equals the number of inversions of $w$. Let $T=\{t_{ij}:=(i\ j)\:|\:1\leq i<j\leq n\}$ be the set of \emph{transpositions}, or equivalently, conjugates of the simple transpositions $S$. 
\subsection{The quantum Bruhat graph and the tilted Bruhat orders}
\begin{defin}\label{def:quantum-bruhat-graph}
The \emph{quantum Bruhat graph} $\Gamma_n$ is a weighted directed graph on $S_n$ with the following two types of edges:
\[
\begin{cases}
w\rightarrow wt_{ij}\text{ of weight }1&\text{ if }\ell(wt_{ij})=\ell(w)+1,\\
w\rightarrow wt_{ij}\text{ of weight }q_{ij}:=q_iq_{i+1}\cdots q_{j-1}&\text{ if }\ell(wt_{ij})=\ell(w)+1-2(j-i),
\end{cases}
\]
where $1\leq i<j\leq n$. Write $\wt(w\rightarrow wt_{ij})\in\mathbb{Z}[q_1,\ldots,q_{n-1}]$ for the weight.
\end{defin}
In other words, $w\rightarrow wt_{ij}$ if $w_i<w_j$ and for every $i<k<j$, $w_k>w_j$ or $w(k)<w(i)$, which gives an edge in the Hasse diagram of the strong Bruhat order, or $w\rightarrow wt_{ij}$ if $w_i>w_j$ and for every $i<k<j$, $w_i>w_k>w_j$. We can unify these two cases together using cyclic intervals.
\begin{defin}\label{def:cyclicinterval}
For $a\neq b\in [n]$, the (open) \emph{cyclic interval} from $a$ to $b$ is $(a,b)_c:=\{a+1,\ldots,b-1\}$ if $a<b$, and $(a,b)_c:=\{b+1,\ldots,n\}\cup\{1,\ldots,a-1\}$ if $a>b$. We can similarly define cyclic intervals $[,]$, $(,]$ and $[,)$.
\end{defin}
Now $w\rightarrow wt_{ij}$ in the quantum Bruhat graph if and only if $w_k\in (w_i,w_j)_c$ for all $i<k<j$. It is also clear that $k\in(a,b)_c$ if and only if $k+1\in(a+1,b+1)_c$. Therefore, we have the following immediate observation on the cyclic symmetry of the quantum Bruhat graph.
\begin{lemma}\label{lem:cyclic-symmetry}
Let $\tau\in S_n$ be the long cycle $(123\cdots n)$. Then $w\mapsto \tau w$ is an automorphism of the unweighted quantum Bruhat graph.
\end{lemma}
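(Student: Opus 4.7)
The plan is to use the unified cyclic-interval characterization of edges in $\Gamma_n$ that was just stated in the paragraph before the lemma. The key initial observation is that left multiplication by $\tau$ acts on one-line notation by shifting every entry cyclically by one: $(\tau w)(k) = \tau(w(k))$, which equals $w(k)+1$ when $w(k) < n$ and equals $1$ when $w(k)=n$. In particular, the map $w \mapsto \tau w$ is a bijection on $S_n$, so it suffices to show that it sends edges to edges.

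Next, I would pick an arbitrary edge $w \to w t_{ij}$ in $\Gamma_n$ and verify that $\tau w \to (\tau w) t_{ij}$ is also an edge. Using the characterization $w \to w t_{ij}$ iff $w_k \in (w_i, w_j)_c$ for every $i < k < j$, this reduces to checking, for each such $k$, whether $(\tau w)_k \in ((\tau w)_i, (\tau w)_j)_c$. By the first paragraph, this is exactly the assertion that $w_k + 1 \in (w_i + 1, w_j + 1)_c$ (with values read mod $n$). The identity $k \in (a,b)_c \iff k+1 \in (a+1,b+1)_c$ explicitly noted in the excerpt makes this immediate.

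A small subtlety worth flagging, which justifies the word ``unweighted'' in the statement, is that $\tau$ need not preserve the \emph{type} of an edge: a Bruhat-type edge (where $w_i < w_j$) can become a quantum-type edge after applying $\tau$, if exactly one of $w_i, w_j$ is sent from $n$ to $1$. Consequently the weight can change from $1$ to some $q_{i'j'}$ or vice versa, so the statement cannot be strengthened to a weighted automorphism. There is no real obstacle in the argument; the whole content of the lemma is that packaging the two edge conditions into a single cyclic-interval condition makes the cyclic $\mathbb{Z}/n\mathbb{Z}$-symmetry visible.
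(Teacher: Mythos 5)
Your argument is correct and is exactly the paper's: the paper presents the cyclic-interval characterization $w\to wt_{ij}$ iff $w_k\in(w_i,w_j)_c$ for $i<k<j$, notes that $k\in(a,b)_c\iff k+1\in(a+1,b+1)_c$, and then states the lemma as an ``immediate observation.'' Your additional remark on why the statement is only an unweighted automorphism is a useful clarification but not a deviation in approach.
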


The quantum Bruhat graph for $n=3$ is shown in Figure~\ref{fig:quantum-Bruhat-graph-n=3}.
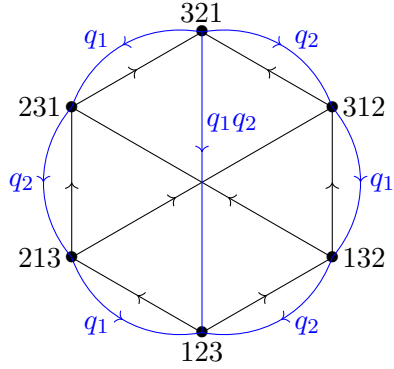
\begin{figure}[h!]
\centering
\begin{tikzpicture}[scale=1.0]
\coordinate (a) at (0,-2);
\coordinate (b) at (1.73205080757,-1);
\coordinate (c) at (1.73205080757,1);
\coordinate (d) at (0,2);
\coordinate (e) at (-1.73205080757,1);
\coordinate (f) at (-1.73205080757,-1);
\node at (a) {$\bullet$};
\node[below] at (a) {$123$};
\node at (b) {$\bullet$};
\node[right] at (b) {$132$};
\node at (c) {$\bullet$};
\node[right] at (c) {$312$};
\node at (d) {$\bullet$};
\node[above] at (d) {$321$};
\node at (e) {$\bullet$};
\node[left] at (e) {$231$};
\node at (f) {$\bullet$};
\node[left] at (f) {$213$};
\draw[postaction={decorate,decoration={markings,mark=at position 0.5 with {\arrow{>}}}}] (a) -- (b);
\draw[postaction={decorate,decoration={markings,mark=at position 0.5 with {\arrow{>}}}}] (a) -- (f);
\draw[postaction={decorate,decoration={markings,mark=at position 0.5 with {\arrow{>}}}}] (b) -- (c);
\draw[postaction={decorate,decoration={markings,mark=at position 0.4 with {\arrow{>}}}}] (b) -- (e);
\draw[postaction={decorate,decoration={markings,mark=at position 0.4 with {\arrow{>}}}}] (f) -- (c);
\draw[postaction={decorate,decoration={markings,mark=at position 0.5 with {\arrow{>}}}}] (f) -- (e);
\draw[postaction={decorate,decoration={markings,mark=at position 0.5 with {\arrow{>}}}}] (e) -- (d);
\draw[postaction={decorate,decoration={markings,mark=at position 0.5 with {\arrow{>}}}}] (c) -- (d);
\draw[blue, postaction={decorate,decoration={markings,mark=at position 0.5 with {\arrow{>}}}}] (b) to[bend left=40] (a);
\node[blue] at (1.4,-1.9) {$q_2$};
\draw[blue, postaction={decorate,decoration={markings,mark=at position 0.5 with {\arrow{>}}}}] (c) to[bend left=40] (b);
\node[blue] at (2.4,0) {$q_1$};
\draw[blue, postaction={decorate,decoration={markings,mark=at position 0.5 with {\arrow{>}}}}] (d) to[bend left=40] (c);
\node[blue] at (1.4,1.9) {$q_2$};
\draw[blue, postaction={decorate,decoration={markings,mark=at position 0.5 with {\arrow{>}}}}] (d) to[bend right=40] (e);
\node[blue] at (-1.4,1.9) {$q_1$};
\draw[blue, postaction={decorate,decoration={markings,mark=at position 0.5 with {\arrow{>}}}}] (e) to[bend right=40] (f);
\node[blue] at (-2.4,0) {$q_2$};
\draw[blue, postaction={decorate,decoration={markings,mark=at position 0.5 with {\arrow{>}}}}] (f) to[bend right=40] (a);
\node[blue] at (-1.4,-1.9) {$q_1$};
\draw[blue, postaction={decorate,decoration={markings,mark=at position 0.4 with {\arrow{>}}}}] (d) to (a);
\node[blue] at (0.4,0.8) {$q_1q_2$};
\end{tikzpicture}
\caption{Quantum Bruhat graph $\Gamma_3$ (unlabeled edges have weight $1$)}
\label{fig:quantum-Bruhat-graph-n=3}
\end{figure}

For a directed path $P:w^{(0)}\rightarrow w^{(1)}\rightarrow\cdots\rightarrow w^{(k)}$ in $\Gamma_n$, we say that $P$ has length $k$, with weight $\prod_{i=1}^k\wt(w^{(i-1)}\rightarrow w^{(i)})$. For $u,v\in S_n$, let $\ell(u,v)$ be the length of a shortest path from $u$ to $v$. Note that it is clear that for any $u,v\in S_n$, there exists a directed path from $u$ to $v$, as one can always go through the identity. Postnikov \cite{Postnikov-quantum-Bruhat-graph} established some nice properties regarding weights of shortest paths. Here, we write $q^{\alpha}$ for $q_1^{\alpha_1}\cdots q_{n-1}^{\alpha_{n-1}}$. 
\begin{lemma}[\cite{Postnikov-quantum-Bruhat-graph}]\label{lem:shortest-length-equals-weight}
For any $u,v\in S_n$, all shortest paths from $u$ to $v$ have the same weight $q^{d(u,v)}$. Moreover, the weight of any path from $u$ to $v$ is divisible by $q^{d(u,v)}$. In addition, if a path from $u$ to $v$ has weight $q^{d(u,v)}$, it must be a shortest path. 
\end{lemma}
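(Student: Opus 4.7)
The plan is to reduce everything to a local \emph{diamond lemma} for $\Gamma_n$, combined with a \emph{length--weight balance}. First, summing the per-edge length changes along any path $P: u=w^{(0)}\to w^{(1)}\to\cdots\to w^{(k)}=v$ of weight $q^\alpha$ yields
\[
\ell(v)-\ell(u) = k - 2|\alpha|, \qquad |\alpha|:=\alpha_1+\cdots+\alpha_{n-1},
\]
because each classical edge contributes $+1$ to the Coxeter length, while each quantum edge $w\to wt_{ij}$ contributes $1-2(j-i)$ to the length and carries weight $q_iq_{i+1}\cdots q_{j-1}$, contributing $j-i$ to $|\alpha|$. Thus $|\alpha|$ is determined by $k$ and the endpoints, and a path is shortest if and only if its $|\alpha|$ is minimal.

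The key tool I would establish is a diamond lemma: for any two distinct edges $u\to u'$ and $u\to u''$ in $\Gamma_n$, there exists a common vertex $w$ with edges $u'\to w$ and $u''\to w$ such that the length-$2$ paths $u\to u'\to w$ and $u\to u''\to w$ carry equal weights. The proof proceeds by case analysis on how the supports of the two transpositions $t_a, t_b$ (with $u'=ut_a$, $u''=ut_b$) interact---disjoint, sharing one index, or nested---verifying in each case, via the cyclic interval description just after \Cref{def:cyclicinterval}, that the completing edges exist in $\Gamma_n$ and that the two weight monomials match. I expect this case analysis to be the main obstacle, since mixed classical/quantum configurations require tracking both the combinatorial existence of the completing edges and the precise factorization of the $q$-monomials.

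Given the diamond lemma, parts (a)--(c) follow by standard arguments. For (a), any two paths of common length and common endpoints can be connected by a sequence of local diamond substitutions, each of which preserves total weight; hence all shortest paths from $u$ to $v$ share a common weight, which we name $q^{d(u,v)}$. Part (c) is then immediate from the balance relation: a path of weight $q^{d(u,v)}$ has the same $|\alpha|$ as a shortest path and so must have the same length. Finally, for (b) I would induct on $k-\ell(u,v)$: any non-shortest path admits, after alignment via diamond moves with a fixed shortest path, two consecutive edges whose replacement yields a path of length $k-2$ with weight dividing the original, reducing to the shortest-path case where divisibility is trivial by (a).
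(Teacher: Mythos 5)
Your length--weight balance is correct and genuinely useful: summing the per-edge length changes gives $\ell(v)-\ell(u)=k-2|\alpha|$, so once part (a) is known, any path of weight $q^{d(u,v)}$ automatically has length $\ell(u,v)$, which cleanly disposes of the last sentence of Lemma~\ref{lem:shortest-length-equals-weight}. The problem is the diamond lemma on which (a) and (b) rest: it is false as stated. Take $n=3$, $u=321$, $u'=ut_{12}=231$ and $u''=ut_{23}=312$ (both quantum edges, of weights $q_1$ and $q_2$). Reading off Figure~\ref{fig:quantum-Bruhat-graph-n=3}, the only common out-neighbor of $231$ and $312$ is $321$ itself, and the two length-$2$ paths $321\to 231\to 321$ and $321\to 312\to 321$ have weights $q_1\neq q_2$. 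So there is no completing vertex giving equal weights, and the planned case analysis on how the two transpositions interact cannot go through for arbitrary pairs of edges.

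Moreover, even if a correct local-move statement were available, the remaining steps are not routine: the claim that any two equal-length paths with the same endpoints are connected by diamond substitutions is a global connectivity assertion that does not follow from the mere existence of local diamonds (no Newman/termination argument is available, since the proposed moves preserve length), and the induction for (b) presupposes an ``alignment'' of an arbitrary path with a fixed shortest path that is never constructed. This is essentially the content that has to be proved, and it is exactly what the cited literature supplies by a different mechanism: the paper itself does not prove Lemma~\ref{lem:shortest-length-equals-weight} but attributes it to Postnikov's Lemma~1, whose proof rests on the reflection-ordering result recalled here as Theorem~\ref{thm:shortest-path-BFP} (BFP, Lemma~6.7): for each reflection ordering there is a unique label-increasing path from $u$ to $v$, it is shortest, and comparison of an arbitrary path with this canonical one yields both the common weight of shortest paths and the divisibility statement. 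A self-contained argument should go through that machinery (or the quantum Bruhat operators of Brenti--Fomin--Postnikov) rather than an unrestricted diamond lemma; your balance identity can then be retained as the final step giving (c).
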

Lemma~\ref{lem:shortest-length-equals-weight} is essentially saying that shortest length paths are precisely minimal weight paths. We remark that the last sentence from Lemma~\ref{lem:shortest-length-equals-weight} is not explicitly written done in \cite{Postnikov-quantum-Bruhat-graph}, but follows directly from the proof of Lemma~1 of \cite{Postnikov-quantum-Bruhat-graph}, with much of the main content established in Lemma~6.7 of \cite{BFP-tilted-Bruhat}. From now on, write $q^{d(u,v)}=q_1^{d_1(u,v)}q_2^{d_2(u,v)}\cdots q_{n-1}^{d_{n-1}(u,v)}$ for the minimal weight from $u$ to $v$, where $d(u,v)$ is a $(n-1)$-tuple.
\begin{ex}
Let $u=321$ and $v=213$. There are two shortest paths from $u$ to $v$ of length $2$: $321\rightarrow 231\rightarrow 213$ and $321\rightarrow 123\rightarrow 213$. We see that both paths have the same weight $q_1q_2$ so $d(321,213)=(1,1)$. It is straightfroward to check that any other paths from $u$ to $v$ have weight divisible by (and not equal to) $q_1q_2$. 
\end{ex}

\begin{defin}[tilted Bruhat order \cite{BFP-tilted-Bruhat}]\label{def:tilted-Bruhat-order}
For $u\in S_n$, define the \emph{tilted Bruhat order} $D_u$ to be the graded partial order on $S_n$ such that $w\leq_u v$ if
\begin{equation}\label{eqn:tilted-order-def}
    \ell(u,w)+\ell(w,v) = \ell(u,v).
\end{equation}
Equivalently, $w\leq_{u} v$ if there is a shortest path in the quantum Bruhat graph from $u$ to $v$ that passes through $w$.
\end{defin}

\begin{remark}
    A special case of the tilted Bruhat order is the strong Bruhat order. This is obtained by setting $u = id$.
\end{remark}
The tilted Bruhat order $D_{132}$ is shown in Figure~\ref{fig:tilted-132}.
\begin{figure}[h!]
\centering
\begin{tikzpicture}[scale=1.0]
\node at (0,0) {$\bullet$};
\node[below] at (0,0) {$132$};
\node at (-1.6,1) {$\bullet$};
\node[left] at (-1.6,1) {$123$};
\node at (0,1) {$\bullet$};
\node[above] at (0,1) {$231$};
\node at (1.6,1) {$\bullet$};
\node[right] at (1.6,1) {$312$};
\node at (-0.8,2) {$\bullet$};
\node[above] at (-0.8,2) {$213$};
\node at (0.8,2) {$\bullet$};
\node[above] at (0.8,2) {$321$};
\draw(0,0)--(-1.6,1)--(-0.8,2)--(0,1)--(0,0);
\draw(0,0)--(1.6,1)--(0.8,2)--(0,1);
\end{tikzpicture}
\caption{The tilted Bruhat order $D_{132}$}
\label{fig:tilted-132}
\end{figure}
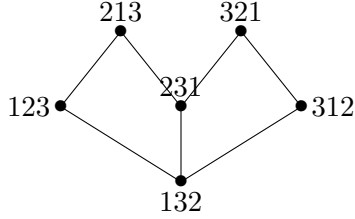

\begin{defin}
    For $w\leq_u v$, define the \emph{tilted Bruhat interval} to be 
    \[[w,v]_u = \{x\in S_n:w\leq_u x \leq_u v\}.\]
\end{defin}
\begin{remark}
    It follows from \eqref{eqn:tilted-order-def} that $[w,v]_u = [w,v]_{u'}$ as long as $w\leq_u v$ and $w\leq_{u'} v$. Since we always have $w\leq_w v$, we will omit the subscript and write $[w,v]$ instead of $[w,v]_u$ in the remaining part of this paper.
\end{remark}

\subsection{Root system and reflection ordering}
The root system of type $A_{n-1}$ consists of $\Phi=\{e_i-e_j\:|\: 1\leq i,\neq j\leq n\}$ with positive roots $\Phi^+=\{e_i-e_j\:|\: 1\leq i<j\leq n\}$ and simple roots $\Delta=\{\alpha_i:=e_i-e_{i+1}\:|\: 1\leq i\leq n-1\}$. It's corresponding Weyl group is identified with the symmetric group $S_n$, while the reflection across the hyperplane normal to $e_i-e_j$ is identified with $t_{ij}=(i\ j)$. 
\begin{defin}
An ordering of $\Phi^+$ (or equivalently, of the reflections $T$) as $\gamma_1,\ldots,\gamma_{\ell}$ is called a \emph{reflection ordering} if $e_i-e_k$ appears (not necessarily consecutively) in the middle of $e_i-e_j$ and $e_j-e_k$ for all $i<j<k$.
\end{defin}
The following lemma is very classical. See for example Proposition 3 of \cite{bjorner1984orderings}.
\begin{lemma}
Reflection orderings are in bijection with reduced words of the longest permutation $w_0=n\ n{-}1\cdots 21$. Given $a=a_1\cdots a_{\ell}\in R(w_0)$, its corresponding reflection ordering $\gamma_1,\ldots,\gamma_{\ell}$ is constructed via $\gamma_k=s_{a_1}\cdots s_{a_{j-1}}\alpha_{a_j}$ for $k=1,\ldots,\ell={n\choose 2}$.
\end{lemma}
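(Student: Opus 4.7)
The plan is to prove both directions of the bijection by inducting on $\ell = \binom{n}{2}$, using the standard "peeling off a simple root" procedure that extracts a reduced word from an ordering (and vice versa).

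First I would check that the construction $\gamma_j = s_{a_1}\cdots s_{a_{j-1}}\alpha_{a_j}$ produces a well-defined sequence of $\binom{n}{2}$ distinct positive roots exhausting $\Phi^+$. This is the classical identification of the inversions of a reduced expression: writing $w_j = s_{a_1}\cdots s_{a_j}$, one has $w_{j-1}\alpha_{a_j}\in\Phi^+$ precisely because $\ell(w_j)=\ell(w_{j-1})+1$, and the roots $\gamma_1,\ldots,\gamma_\ell$ are pairwise distinct because they biject with the inversions of $w_\ell=w_0$, whose inversion set is all of $\Phi^+$.

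Next I would verify the reflection ordering property (betweenness of $e_i-e_k$ between $e_i-e_j$ and $e_j-e_k$ for $i<j<k$). The cleanest route is to observe that any two reduced words of $w_0$ are connected by commutation moves $s_p s_q \leftrightarrow s_q s_p$ (with $|p-q|\geq 2$) and braid moves $s_p s_{p+1} s_p \leftrightarrow s_{p+1} s_p s_{p+1}$. Commutation moves swap two roots $\gamma_j,\gamma_{j+1}$ whose reflections commute; such a swap is supported in a reducible rank-$2$ subsystem and cannot violate betweenness in any $A_2$-subsystem. A braid move replaces a triple $\gamma_j,\gamma_{j+1},\gamma_{j+2}$ equal to $(\alpha,\alpha+\beta,\beta)$ inside a single $A_2$-subsystem by $(\beta,\alpha+\beta,\alpha)$; in both cases the middle root is $\alpha+\beta$, so betweenness for this triple is preserved. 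Since betweenness in subsystems disjoint from the move is obviously preserved, and since the identity reduced word $s_1 s_2 s_1 s_3 s_2 s_1\cdots$ can be checked directly (or one can explicitly exhibit a single reduced word whose associated ordering is easy to verify), the property propagates to every reduced word.

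For the bijection, I would argue by induction on $n$. Given a reflection ordering $\gamma_1,\ldots,\gamma_\ell$, I first claim $\gamma_1$ is a simple root: if $\gamma_1=e_i-e_k$ with $k>i+1$, then for any $i<j<k$ the betweenness condition forces $\gamma_1$ to lie between the positive roots $e_i-e_j$ and $e_j-e_k$, which contradicts $\gamma_1$ being first. So write $\gamma_1=\alpha_{a_1}$. Then $s_{a_1}$ permutes $\Phi^+\setminus\{\alpha_{a_1}\}$, and I would check that $s_{a_1}\gamma_2,\ldots,s_{a_1}\gamma_\ell$ is again a reflection ordering: the betweenness condition for a triple in the tail transports under $s_{a_1}$ because the map $\beta\mapsto s_{a_1}\beta$ sends rank-$2$ subsystems to rank-$2$ subsystems and preserves the relation "middle root equals sum of the other two". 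Iterating extracts a unique sequence $a_1,a_2,\ldots,a_\ell$, which is the reduced word whose construction in the lemma reproduces the original ordering. Injectivity of $a\mapsto(\gamma_1,\ldots,\gamma_\ell)$ follows at once from this reconstruction.

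The main obstacle is the verification that applying $s_{a_1}$ to the tail preserves the reflection ordering axiom; this requires a small case analysis, distinguishing triples of positive roots in $\Phi^+\setminus\{\alpha_{a_1}\}$ according to whether $s_{a_1}$ acts trivially or not on the supporting rank-$2$ subsystem, and using the fact that in type $A$ each rank-$2$ subsystem is of type $A_1\times A_1$ or $A_2$, so the three positive roots of an $A_2$-subsystem are mapped either identically, or by a permutation that still places the sum in the middle.
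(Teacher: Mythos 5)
The paper does not prove this lemma; it is quoted as a classical fact with a citation to Proposition~3 of \cite{bjorner1984orderings}, so there is no in-paper argument to compare against. Your proof is a sound proof of the classical statement, modulo a few small imprecisions worth tightening. In the backward direction the recursion is an induction on the number of roots already peeled off (equivalently, on the length of the prefix $s_{a_1}\cdots s_{a_j}$), not an induction on $n$: the ambient set $\Phi^+$ never shrinks. Relatedly, $s_{a_1}\gamma_2,\ldots,s_{a_1}\gamma_\ell$ has only ${n\choose 2}-1$ terms, so it is not literally a reflection ordering of $\Phi^+$; the precise claim to prove is that $s_{a_1}\gamma_2,\ldots,s_{a_1}\gamma_\ell,\alpha_{a_1}$ is again a reflection ordering. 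Beyond the ``$s_{a_1}$ transports $A_2$-subsystems'' step you describe, this needs one extra check for $A_2$-triples containing $\alpha_{a_1}$: if $\{\alpha_{a_1},\beta,\alpha_{a_1}+\beta\}\subset\Phi^+$ then $s_{a_1}\beta=\alpha_{a_1}+s_{a_1}(\alpha_{a_1}+\beta)$, so in the original ordering $s_{a_1}\beta$ sits between $\gamma_1=\alpha_{a_1}$ (first) and $s_{a_1}(\alpha_{a_1}+\beta)$; after applying $s_{a_1}$ and placing $\alpha_{a_1}$ at the end, $\alpha_{a_1}+\beta$ is again between $\beta$ and $\alpha_{a_1}$, as required. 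For the forward direction, the move-propagation argument is correct once a base case is exhibited, which you acknowledge; one can also avoid the base case entirely by arguing directly from biconvexity of the inversion sets $\mathrm{Inv}(w_m^{-1})=\{\gamma_1,\ldots,\gamma_m\}$ of the prefixes $w_m=s_{a_1}\cdots s_{a_m}$, which immediately forces $e_i-e_k$ to sit between $e_i-e_j$ and $e_j-e_k$.
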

\begin{ex}
Consider the reduced word $4321=s_3s_1s_2s_1s_3s_2$ which corresponds to the reflection ordering written on top of the arrows:
\[
\begin{tikzcd}
1234 \arrow[r, "e_3-e_4"]& 1243 \arrow[r, "e_1-e_2"] & 2143 \arrow[r, "e_1-e_4"] & 2413 \arrow[r, "e_2-e_4"] & 4213 \arrow[r, "e_1-e_3"] & 4231 \arrow[r, "e_2-e_3"] & 4321.
\end{tikzcd}
\]
\end{ex}

\subsection{Grassmaniann, Flag variety, and Pl\"ucker coordinates}

Define the \emph{Grassmaniann} $\gr(k,n)$ to be the space of $k$-dimensional subspaces $V\subseteq\mathbb{C}^n$. The \emph{Pl\"ucker embedding} is a closed embedding $\iota: \gr(k,n) \hookrightarrow \mathbb{P}(\Lambda^k(\mathbb{C}^n)))$ that sends the subspace $V$ with basis $\vec{v}_1,\vec{v}_2 ,\dots,\vec{v}_k$ to $[\vec{v}_1 \wedge \vec{v}_2 \wedge \dots \wedge \vec{v}_k]$, the projective equivalence class of  $\vec{v}_1 \wedge \vec{v}_2 \wedge \dots \wedge \vec{v}_k$ in $\Lambda^k(\mathbb{C}^n)$.
For any $i_1,\dots,i_k\in [n]$, let $P_{i_1,\dots,i_k}(V)$ be the \emph{Pl\"ucker coordinate} of $V$. For any permutation $\sigma\in S_k$,
\[P_{i_1,\dots,i_k}(V) = (-1)^{\ell(\sigma)} P_{\sigma(i_1),\dots,\sigma(i_k)}(V).\]
In particular, this means $P_{i_1,\dots,i_k}(V) = 0$ if $i_a = i_b$ for some $a\neq b\in [k]$.
For $I\in {[n]\choose k}$, set $P_I := P_{i_1,\dots,i_k}$ where $I = \{i_1<\dots<i_k\}$.
For any permutation $w\in S_n$, define 
\[P_w=:\prod_{k=1}^{n-1}P_{w[k]}\]
where $w[k]:=\{w_1,w_2,\dots,w_k\}\in {[n]\choose k}$ for $k\in [n-1]$. In particular, $P_{w[k]}(w) \in \{-1,1\}$. 
Set
$P_{I+i} := P_{i_1,\dots,i_k,i}$ and $P_{I-i_j} := (-1)^{k-j}P_{i_1,\dots,\widehat{i_j},\dots,i_k}$ for all $j\in [k]$. More generally, for $J = \{i_{j_1}<\dots<i_{j_r}\}\subset I$, set $P_{I-J}:= P_{((I-i_{j_r})-\dots)-i_{j_1}}$.
The variety $\gr(k,n)$ is the zero locus of the ideal generated by the following \emph{Pl\"ucker relations}.

\begin{defin}
    For any $I\in {[n]\choose k+1}, J\in {[n]\choose k-1}$, the \emph{Pl\"ucker relation} associated to $I,J$ is:
    \[\sum_{t\in I} P_{I-t}P_{J+t} = 0.\]
\end{defin}

Let $G = {GL}_n(\mathbb{C})$ and $B,B_{-}\subset G$ be the Borel and opposite Borel subgroup of $G$ consisting of invertible upper and lower triangular matrices respectively. Let $T=B\cap B_{-}$ be the maximal torus of diagonal matrices in $G$. 

The \emph{complete flag variety} is defined to be $\mathrm{Fl}_n = G/B$. Fixing a basis of $\mathbb{C}^n$, we can identify a point $gB\in G/B$ with a \emph{flag} $F_{\bullet} = 0=F_0\subsetneq F_1 \subsetneq F_2 \subsetneq \cdots \subsetneq F_{n-1}\subsetneq F_n=\mathbb{C}^n$ where $F_k\in\gr(k,n)$ is the span of the first $k$ column vectors of any $n\times n$ matrix representative $M_F\in gB$.
We define the \emph{multi-Pl\"ucker embedding} to be the composition \[\mathrm{Fl}_n \hookrightarrow \prod_{k = 0}^n \gr(k,n)\hookrightarrow \prod_{k = 0}^n \mathbb{P}(\Lambda^k (\mathbb{C}^n))\] that sends $F_\bullet$ to $(\iota(F_0), \iota(F_1),\dots,\iota(F_n))$. For any $k$ and $I\in {[n]\choose k}$, define the Pl\"ucker coordinate $P_I(F_\bullet)$ to be $P_I(F_k)$. It is also the minor of $M_F$ in the rows indexed by $I$ in the first $k$ columns. Similar to the Grassmaniann, the complete flag variety is the zero locus of the following \emph{incidence Pl\"ucker relations}:
\begin{defin}[\cite{youngtableaux}]
    For any $I\in {[n]\choose r}, J\in {[n]\choose s}$ with $1\leq s\leq r < n$ fix $A\subseteq J$, the \emph{incidence Pl\"ucker relation} is:
    \begin{equation}\label{eqn:fultonplucker}
        P_IP_J = \sum_{B\subseteq I, |B| = |A|}P_{(I- B)+ A}P_{(J- A)+ B}.
    \end{equation}
\end{defin}
There are two special cases of Equation~\eqref{eqn:fultonplucker} that will be of particular importance to us. We now rewrite the incidence Pl\"ucker relations in these cases for convenience.

\noindent\textbf{Case \RNum{1}.}($A = J$ and $|I| = |J|+1$): For $I\in {[n]\choose k}, J \in {[n]\choose k-1}$ where $1< k<n$,
\begin{equation}\label{eqn:incidenceplucker1}
    P_IP_J = \sum_{i\in I}P_{I-i}P_{J+i}
\end{equation}

\noindent\textbf{Case \RNum{2}.}($|A| = 1$ and $r-s\geq 2$): 
For $I\in {[n]\choose r}, J\in {[n]\choose s}$ with $1\leq s\leq r<n$,
\begin{equation}\label{eqn:incidenceplucker2}
     \sum_{i\in I} P_{I-i}P_{J+i} = 0.
 \end{equation}
Equivalently, 
\begin{equation}\label{eqn:incidenceplucker3}
    P_{I}P_{J+j} = -\sum_{i\in I}P_{I-i+j}P_{J+i}.
\end{equation}

\subsection{Schubert and Richardson varieties in $\mathrm{Fl}_n$}


Given a permutation $w\in S_n$, we also view $w$ as the permutation matrix with $1$ at $(w(i),i)$ and $0$ everywhere else\footnote{We note that in some literature, this is the permutation matrix that corresponds to $w^{-1}$ instead of $w$.}. The set of $T$-fixed points on $\mathrm{Fl}_n$ is $\{e_w = wB/B: w\in S_n\}$. Define the \emph{Schubert cell} $X_{w}^\circ = Be_w$ to be the Borel orbit of $e_w$ and the \emph{Schubert variety} $X_{w} = \overline{X_w^\circ}$ to be the closure of the Schubert cell. Similarly, define the \emph{opposite Schubert cell} $\Omega_{w}^\circ = B_{-}e_w$ and the \emph{opposite Schubert variety} $\Omega_w = \overline{\Omega_w^\circ}$. Here $X_{w}^\circ\cong \mathbb{C}^{\ell(w)}$ and $\Omega_w^\circ \cong \mathbb{C}^{{n\choose 2}-\ell(w)}$. 

We will make use of the following equivalent definition of (opposite) Schubert cells and Schubert varieties. For $S$ any subset of $[n]$, let $\mathrm{Proj}_S:\mathbb{C}^{n}\twoheadrightarrow \mathbb{C}^{|S|}$ be the projection onto the coordinates with indices in $S$. 
The Schubert cell is
\[X_w^{\circ} = \{F_{\bullet}\in \mathrm{Fl}_n: \dim(\mathrm{Proj}_{[n-j+1,n]}(F_i)) = \#\{w([i])\cap [n-j+1,n]\} \text{ for all }i,j\in [n]\},\]
and the Schubert variety is 
\[X_w = \{F_{\bullet}\in \mathrm{Fl}_n: \dim(\mathrm{Proj}_{[n-j+1,n]}(F_i)) \leq \#\{w([i])\cap [n-j+1,n]\} \text{ for all }i,j\in [n]\}.\]
Similarly, the opposite Schubert cell is
\[\Omega_w^{\circ} = \{F_{\bullet}\in \mathrm{Fl}_n: \dim(\mathrm{Proj}_{[j]}(F_i)) = \#\{w([i])\cap [j]\} \text{ for all }i,j\in [n]\},\]
and the opposite Schubert variety is
\[\Omega_w = \{F_{\bullet}\in \mathrm{Fl}_n: \dim(\mathrm{Proj}_{[j]}(F_i)) \leq \#\{w([i])\cap [j]\} \text{ for all }i,j\in [n]\}.\]

The (opposite) Schubert varieties possess a stratification by (opposite) Schubert cells:
\[X_{w} = \bigsqcup_{u\leq w}X_u^\circ\ \text{and } \Omega_w = \bigsqcup_{w\leq v}\Omega_v^\circ,\]
where ``$\leq$" denotes the strong Bruhat order on $S_n$.

It is perhaps easier to visualize Schubert and opposite Schubert varieties in the following way. For any $i,j\in [n]$ and any $M\in \mathrm{Mat}_{n\times n}$, define $r_{i,j}^{SW}(M)$ to be the rank of the submatrix of $M$ obtained by taking the bottom $n-i+1$ rows and left $j$ columns. Define similarly $r_{i,j}^{NW}(M)$ to be the rank of the submatrix obtained by taking the top $i$ rows and left $j$ columns. Let $M_F\in G$ be an matrix representative of $F_\bullet$ such that $F_i$ is the span of the first $i$ column vectors of $M_F$. Then
\begin{align}\label{eqn:Schubertrank}
    \begin{split}
        F_\bullet \in X_{w} &\iff r^{SW}_{i,j}(M_F)\leq r^{SW}_{i,j}(w)\\
    F_\bullet \in \Omega_{w} &\iff r^{NW}_{i,j}(M_F)\leq r^{NW}_{i,j}(w)
    \end{split},
\end{align}
and $F_\bullet$ lies in the (opposite) Schubert cell if \eqref{eqn:Schubertrank} holds after replacing ``$\leq$" with ``$=$".

Define the \emph{open Richardson variety} to be $\mathcal{R}^\circ_{u,v} = X^\circ_{v}\cap \Omega^\circ_{u}$ and the \emph{Richardson variety} to be $\mathcal{R}_{u,v} = X_{v}\cap \Omega_u$. In particular, $\mathcal{R}^\circ_{u,v}$ and $\mathcal{R}_{u,v}$ are non-empty if and only if $u\leq v$, in which case we have
\[\dim(\mathcal{R}_{u,v}) = \dim(\mathcal{R}_{u,v}^\circ) = \ell(v)-\ell(u) = \ell(u,v).\]
Similar to Schubert varieties, we also have
\begin{equation}\label{eqn:Richardson}
    \mathcal{R}_{u,v} = \overline{\mathcal{R}_{u,v}^{\circ}}\ \text{and }\mathcal{R}_{u,v} = \bigsqcup_{[x,y]\subseteq [u,v]}\mathcal{R}_{x,y}^\circ,
\end{equation}
where the disjoint union is taken over all Bruhat intervals $[x,y]$ contained in $[u,v]$.

Moreover, as subvarieties of $\mathrm{Fl_n}$, each $X_u,\Omega_u$ and $\mathcal{R}_{u,v}$ can be  defined by vanishing of Pl\"ucker coordinates:
\begin{align}\label{eqn:SchubPlucker}
        X_u &= \{F_\bullet\in \mathrm{Fl}_n:   P_w(F_\bullet) = 0 \text{ for }w \nleq u\} & X_u^\circ &= X_u\cap \{P_u \neq 0\}\nonumber\\
        \Omega_u &= \{F_\bullet\in \mathrm{Fl}_n:  P_w(F_\bullet) = 0 \text{ for }w \ngeq u\} & \Omega_u^\circ &= \Omega_u \cap \{P_u \neq 0\}\\
        \mathcal{R}_{u,v} &= \{F_\bullet\in \mathrm{Fl}_n:  P_w(F_\bullet) = 0 \text{ for } w \notin [u,v]\} & \mathcal{R}^\circ_{u,v} &= \mathcal{R}_{u,v} \cap \{P_uP_v \neq 0\}\nonumber.
\end{align}

\subsection{Schubert and Richardson varieties in Grassmannian} 
For each $I\in {[n]\choose k}$, define the \emph{Grassmannian Schubert variety} by
\[X_I = \{V\in \gr(k,n): \dim(\mathrm{Proj}_{[n-j+1,n]}) (V) \leq \#(I\cap [n-j+1,n])\text{ for all }j\in[n]\}\]
and the \emph{Grassmannian Schubert cell} by
\[X_I^\circ = \{V\in \gr(k,n): \dim(\mathrm{Proj}_{[n-j+1,n]}) (V) = \#(I\cap [n-j+1,n])\text{ for all }j\in[n]\}.\]
Define the \emph{Grassmannian opposite Schubert variety} and \emph{opposite Schubert cell} by
\[\Omega_I = \{V\in \gr(k,n): \dim(\mathrm{Proj}_{[j]}) (V) \leq \#(I\cap [j])\text{ for all }j\in[n]\}\]
and
\[\Omega_I^\circ = \{V\in \gr(k,n): \dim(\mathrm{Proj}_{[j]}) (V) = \#(I\cap [j])\text{ for all }j\in[n]\}.\]

Similar to Schubert varieties in $\mathrm{Fl}_n$, we have $X_I=\overline{X_I^\circ}$ and $\Omega_I=\overline{\Omega_I^\circ}$. For any two $k$-element subsets $I,J\subset [n]$ where $I = \{i_1<i_2<\dots<i_k\}$ and $J = \{j_1<j_2<\dots<j_k\}$, we say $I\leq J$ in the \emph{Gale order} if $i_r\leq j_r$ for all $r\in [k]$. Then the (opposite) Grassmaniann Schubert varieties are disjoint union of (opposite) Grassmaniann Schubert cells:
\[X_{I} = \bigsqcup_{J\leq I}X_J^\circ\ \text{and } \Omega_J = \bigsqcup_{I\leq J}\Omega_J^\circ.\]

When $I\leq J$, define the \emph{open Grassmaniann Richardson variety} $\mathcal{R}_{I,J}^\circ = X_{J}^\circ \cap \Omega_{I}^\circ$ and its closure the \emph{Grassmaniann Richardson variety} $\mathcal{R}_{I,J} = X_{J}\cap \Omega_I$. We similarly have
\begin{equation}\label{eqn:grRichardson}
    \mathcal{R}_{I,J} = \overline{\mathcal{R}_{I,J}^{\circ}}\ \text{and }\mathcal{R}_{I,J} = \bigsqcup_{[I',J']\subseteq [I,J]}\mathcal{R}_{I',J'}^\circ.
\end{equation}

One can also define $X_I,\Omega_I$ and $\mathcal{R}_{I,J}$ by vanishing of Pl\"ucker coordinates:
\begin{align}\label{eqn:grSchubPlucker}
        X_I &= \{V\in \gr(k,n):   P_K(V) = 0\text{ for }K \nleq I\} & X_I^\circ &= X_I\cap \{P_I \neq 0\}\nonumber\\
        \Omega_I &= \{V\in \gr(k,n):   P_K(V) = 0\text{ for }K \ngeq I\} & \Omega_I^\circ &= \Omega_I \cap \{P_I \neq 0\}\\
        \mathcal{R}_{I,J} &= \{V\in \gr(k,n):   P_K(V) = 0\text{ for }K \notin [I,J]\} & \mathcal{R}^\circ_{I,J} &= \mathcal{R}_{I,J} \cap \{P_IP_J \neq 0\}\nonumber.
\end{align}

\section{Combinatorics of the quantum Bruhat graph}\label{sec:graph}
\subsection{Minimal weights in the quantum Bruhat graph}
Our first theorem provides an explicit formula for the minimal weight $q^{d(u,v)}$ from any pair of permutation $u$ to $v$ in $S_n$. 
\begin{defin}\label{def:up-down-path}
For $A,B\subset[n]$ with $|A|=|B|$, we construct a lattice path $P(A,B)$ starting at $(0,0)$ and ending at $(n,0)$ with $n$ steps as follows. For each $i=1,\ldots,n$, the $i^{th}$ step is
\begin{itemize}
\item an upstep $(1,1)$ if $i\in A$ and $i\notin B$,
\item a downstep $(1,-1)$ if $i\notin A$ and $i\in B$,
\item a horizontal step $(1,0)$ if $i\in A\cap B$ or $i\notin A\cup B$. 
\end{itemize}
Define its \emph{depth} to be the largest number $y\geq0$ such that this lattice path passes through $(x,-y)$ for some $x$, denoted $\depth(A,B)$. 
\end{defin}
\begin{ex}\label{ex:lattice-path-PAB}
Let $n=7$ with $A=\{3,4,6,7\}$ and $B=\{1,2,3,5\}$. The lattice path $P(A,B)$ is shown in Figure~\ref{fig:lattice-path-PAB} with $\depth(A,B)=2$.
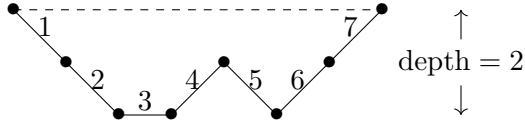
\begin{figure}[h!]
\centering
\begin{tikzpicture}[scale=0.7]
\node at (0,0) {$\bullet$};
\node at (1,-1) {$\bullet$};
\node at (2,-2) {$\bullet$};
\node at (3,-2) {$\bullet$};
\node at (4,-1) {$\bullet$};
\node at (5,-2) {$\bullet$};
\node at (6,-1) {$\bullet$};
\node at (7,0) {$\bullet$};
\draw(0,0)--(2,-2)--(3,-2)--(4,-1)--(5,-2)--(7,0);
\draw[dashed](0,0)--(7,0);
\node (d) at (8.5,-1) {$\mathrm{depth}=2$};
\draw[->] (d)--(8.5,0);
\draw[->] (d)--(8.5,-2);
\node[above] at (0.6,-0.7) {$1$};
\node[above] at (1.6,-1.7) {$2$};
\node[above] at (2.5,-2.1) {$3$};
\node[above] at (3.4,-1.7) {$4$};
\node[above] at (4.6,-1.7) {$5$};
\node[above] at (5.4,-1.7) {$6$};
\node[above] at (6.4,-0.7) {$7$};
\end{tikzpicture}
\caption{The lattice path $P(A,B)$}
\label{fig:lattice-path-PAB}
\end{figure}
\end{ex}

\begin{theorem}\label{thm:weight-distance}
Let $u,v\in S_n$. All shortest paths from $u$ to $v$ have weight $q^{d(u,v)}=q_1^{d_1(u,v)}\cdots q_{n-1}^{d_{n-1}(u,v)}$ where $d_k(u,v)=\depth(u[k],v[k])$. Here, $w[k]:=\{w_1,\ldots,w_k\}$.
\end{theorem}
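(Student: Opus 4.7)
The plan is to establish $d_k(u,v)=\depth(u[k],v[k])$ for each $k$ separately, as two inequalities; combined with Lemma~\ref{lem:shortest-length-equals-weight} this will give the theorem. A useful reformulation I would record first is
\[
\depth(A,B)=\max_{m\in[n]}\bigl(|B\cap[m]|-|A\cap[m]|\bigr),
\]
which is immediate from Definition~\ref{def:up-down-path} since the $y$-coordinate of $P(A,B)$ at $x=m$ equals $|A\cap[m]|-|B\cap[m]|$.

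For the lower bound $d_k(u,v)\ge\depth(u[k],v[k])$, I would fix $v$ and $k$ and introduce the potential $\phi_k(w):=\depth(w[k],v[k])$. The key computation is that along any edge $w\to wt_{ij}$ of $\Gamma_n$, the set $w[k]$ is unchanged unless $i\le k<j$, in which case $w_i$ is exchanged for $w_j$. Thus $|w[k]\cap[m]|$ can only change for $m$ strictly between $w_i$ and $w_j$, and a short case analysis shows that along a Bruhat edge $\phi_k$ can only weakly increase, while along a quantum edge $\phi_k$ weakly decreases by at most $1$. Since $\phi_k(u)=\depth(u[k],v[k])$ and $\phi_k(v)=0$, any directed path from $u$ to $v$ must traverse at least $\depth(u[k],v[k])$ quantum edges $w\to wt_{ij}$ with $i\le k<j$, each contributing a factor of $q_k$ to the weight.

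For the upper bound, I would induct on $\ell(u,v)$. The base case $u=v$ is trivial. For the inductive step, it suffices to exhibit an edge $u\to u'$ in $\Gamma_n$ satisfying
\begin{enumerate}
\item $\ell(u',v)=\ell(u,v)-1$, so that $u'$ lies on a shortest path to $v$; and
\item for every $k$, the drop $\depth(u[k],v[k])-\depth(u'[k],v[k])$ equals the $q_k$-exponent of $\wt(u\to u')$, namely $0$ for a Bruhat edge and $1$ for a quantum edge with $i\le k<j$.
\end{enumerate}
Granting such an edge, the inductive hypothesis applied to $(u',v)$ produces a path from $u'$ to $v$ of weight $\prod_k q_k^{\depth(u'[k],v[k])}$, which prepended by $u\to u'$ yields a path from $u$ to $v$ of weight $\prod_k q_k^{\depth(u[k],v[k])}$. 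By Lemma~\ref{lem:shortest-length-equals-weight} this path is of minimal weight, so its $q_k$-exponents match the lower bound term-by-term.

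The main obstacle will be producing the edge required in the inductive step. Existence of distance-reducing edges $u\to u'$ is standard, but demanding simultaneous depth-compatibility at every Grassmannian level $k$ is more delicate: a generic distance-reducing edge may change $\depth(u[k],v[k])$ in the ``wrong'' direction at some level (strictly increasing along a Bruhat edge, or failing to strictly decrease along a quantum edge). I expect to pick this edge using the geometry of the lattice paths $P(u[k],v[k])$: choose a level $k^{*}$ where the depth is largest, find a transposition $t_{ij}$ that peels off a deepest excursion at level $k^{*}$, and then verify, via a diamond/exchange argument inside $\Gamma_n$, that the same choice is compatible with all other levels. The cyclic symmetry of Lemma~\ref{lem:cyclic-symmetry} may help reduce the problem to a canonical configuration where the ``deepest'' excursion of $P(u[k^*],v[k^*])$ can be normalized.
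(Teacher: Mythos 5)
Your lower-bound half is sound and is genuinely different from the paper: the monovariant argument (Bruhat edges never decrease $\phi_k(w)=\depth(w[k],v[k])$, quantum edges $w\to wt_{ij}$ with $i\le k<j$ decrease it by at most $1$, and these are exactly the edges contributing $q_k$) correctly shows that the $q_k$-exponent of \emph{any} path from $u$ to $v$ is at least $\depth(u[k],v[k])$. The paper never isolates this inequality; it gets both directions at once from a single explicit path.

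The gap is the upper bound, and it is exactly where you say the ``main obstacle'' lies: you never produce the edge $u\to u'$ required in your inductive step, and its existence is not a routine exchange argument --- it is essentially the content of the theorem. An arbitrary distance-decreasing edge will not do: a priori a shortest path could begin with a Bruhat edge that strictly increases $\depth(u[k],v[k])$ at some level $k$, or with a quantum edge that fails to decrease it at some level in its support; ruling this out for \emph{some} (let alone every) first edge of a shortest path is equivalent, via Lemma~\ref{lem:shortest-length-equals-weight} and your lower bound, to exhibiting a path of weight exactly $\prod_k q_k^{\depth(u[k],v[k])}$, which is what must be proved. The sketched remedy (pick a level $k^*$ of maximal depth, peel off a deepest excursion, then check compatibility at all other levels ``via a diamond/exchange argument'') is not carried out, and simultaneous compatibility across all $n-1$ levels is precisely the delicate point: a transposition that fixes the excursion at level $k^*$ can a priori misbehave at other levels, and no argument is given that a single $t_{ij}$ works for all of them. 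The paper resolves this by invoking Theorem~\ref{thm:shortest-path-BFP}: for the reflection ordering $e_1-e_2,e_1-e_3,\ldots,e_1-e_n,e_2-e_3,\ldots$ there is a unique label-increasing path from $u$ to $v$, which is automatically shortest; its first block of edges (those with labels $e_1-e_{p_i}$) is described explicitly, its weight $\wt_1(u,v)$ is computed, and the identity $\wt_1(u,v)\,q^{\depth(u',v)}=q^{\depth(u,v)}$ is verified coordinatewise by a lattice-path case analysis, with induction on $n$ finishing the proof. If you import that construction (or any other explicit path realizing the weight $q^{\depth(u,v)}$), your argument closes --- and in fact your potential-function inequality then becomes unnecessary, since divisibility of all path weights by $q^{d(u,v)}$ (Lemma~\ref{lem:shortest-length-equals-weight}) plus the exhibited path already forces $d(u,v)=\depth(u,v)$. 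As it stands, however, the inductive step rests on an unproved existence claim, so the proposal is incomplete.
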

\begin{ex}
Consider $u=7364152$ and $v=2513746$ in $S_7$. We need to figure out $d_k(u,v)$ for $k=1,\ldots,n-1$. For $k=1$, $u[1]=\{7\}$ and $v[1]=\{2\}$ so we need to construct a lattice path with an upstep at position $7$ and a downstep at position $2$. This lattice path has depth $1$, meaning that $d_1(u,v)=1$. We continue this procedure for all $k$'s. For example, at $k=4$, the lattice path $P(u[4],v[4])$ is discussed in Example~\ref{ex:lattice-path-PAB} with $d_4(u,v)=2$. In the end, we arrive at $q^{d(u,v)}=q_1q_2q_3^2q_4^2q_5q_6$.
\end{ex}

We make use of the following technical tool by Brenti-Fomin-Postnikov. In the quantum Bruhat graph $\Gamma_n$, label each directed edge $w\rightarrow wt_{ij}$ by $e_i-e_j$. The ``label" here does not have indications towards the weight. 
\begin{theorem}[\cite{BFP-tilted-Bruhat}]\label{thm:shortest-path-BFP}
Fix a reflection ordering $\tau=\tau_1,\ldots,\tau_{n\choose 2}$ of $\Phi^+$ and fix $u,v\in S_n$. Then there is a unique directed path from $u$ to $v$ in $\Gamma_n$ such that its sequence of labels is strictly increasing with respect to $\tau$. Moreover, this path has length $\ell(u,v)$.
\end{theorem}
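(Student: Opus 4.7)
The plan is to argue by strong induction on the shortest-path distance $\ell(u,v)$, establishing existence, uniqueness, and the length claim simultaneously. Fix a reflection ordering $\tau_1<\tau_2<\cdots<\tau_{\binom{n}{2}}$ of $\Phi^+$ once and for all. The base case $u=v$ (so $\ell(u,v)=0$) is immediate: the empty path is the unique strictly increasing path from $u$ to itself, and has length $0$.

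For the inductive step, assume the statement holds for all pairs with smaller shortest-path distance. Let $k$ be the smallest index such that $u\to u\tau_k$ is an edge of $\Gamma_n$ lying on \emph{some} shortest path from $u$ to $v$, i.e.\ $\ell(u\tau_k,v)=\ell(u,v)-1$. Such a $k$ exists because $\ell(u,v)\ge 1$ forces the first edge of any shortest path to be some outgoing edge of $u$. By induction there is a unique strictly increasing path $P'$ from $u\tau_k$ to $v$, and it has length $\ell(u,v)-1$. Prepending $u\to u\tau_k$ to $P'$ produces a path of length $\ell(u,v)$ from $u$ to $v$; the key point is that the first label of $P'$ must be some $\tau_{k'}$ with $k'>k$, so that the concatenated path remains strictly increasing. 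This is the content of a local \emph{diamond lemma} described below.

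Uniqueness is then handled by the same local argument. Suppose $P_1$ and $P_2$ are two strictly increasing paths from $u$ to $v$, both shown to have length $\ell(u,v)$, with first labels $\tau_{k_1}<\tau_{k_2}$. Since each first edge of a shortest path satisfies $\ell(u\tau_{k_i},v)=\ell(u,v)-1$, the minimality in the choice of $k$ above forces $k_1=k_2=k$, so the two paths must agree on their first edge; applying induction to the tail finishes the argument. Combined with Lemma~\ref{lem:shortest-length-equals-weight}, the constructed strictly increasing path has length exactly $\ell(u,v)$ as claimed (a strictly increasing path can never revisit an edge label, hence never creates a cycle that could lengthen it).

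The main obstacle is the local diamond lemma: given any 2-step path $u\to u t_\alpha\to u t_\alpha t_\beta$ in $\Gamma_n$ with successive labels $\alpha,\beta\in\Phi^+$ satisfying $\beta<\alpha$ in the reflection ordering, one must produce an alternative 2-step path $u\to u t_{\alpha'}\to u t_\alpha t_\beta$ whose first label $\alpha'$ precedes $\alpha$ in the ordering. This is precisely where the cyclic-interval characterization of edges in $\Gamma_n$ (\Cref{def:quantum-bruhat-graph}) and the defining betweenness property of reflection orderings interact. The proof requires a careful case analysis on the four combinations of Bruhat-edge versus quantum-edge for each of the two steps, using the fact that $\alpha,\beta,\alpha+\beta$ must span a rank-$2$ subsystem of type $A_2$ and that any reflection ordering restricts to a reflection ordering on this rank-$2$ subsystem, for which the claim reduces to a finite check. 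Once this lemma is in hand, both existence and uniqueness drop out cleanly, and the length assertion follows from the shortest-path/weight equivalence of Lemma~\ref{lem:shortest-length-equals-weight}.
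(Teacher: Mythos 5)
This statement is not proved in the paper at all: it is quoted from \cite{BFP-tilted-Bruhat}, where it is obtained from the Yang--Baxter-type relations satisfied by the quantum Bruhat operators $1+Q_\gamma$, the technical heart being their rank-two lemma --- essentially the same local exchange your ``diamond lemma'' is meant to capture. Judged on its own, your existence step is plausible modulo that lemma: with $k$ minimal such that $\ell(u\tau_k,v)=\ell(u,v)-1$, if the tail $P'$ started with $\tau_{k'}$, $k'<k$, the exchange would produce an edge $u\to ut_{\alpha'}$ with $\alpha'$ before $\tau_k$ and $\ell(ut_{\alpha'},v)=\ell(u,v)-1$, contradicting minimality; so the concatenation is increasing of length $\ell(u,v)$. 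The genuine gap is in uniqueness and in the length assertion. The theorem quantifies over \emph{all} strictly increasing paths, not just geodesics, and your induction on $\ell(u,v)$ cannot even be applied to the tail of an arbitrary increasing path: if its first edge is $u\to u\tau_{k_1}$, then $\ell(u\tau_{k_1},v)$ may well equal $\ell(u,v)+1$ (reverse edges generally do not exist in $\Gamma_n$), so the inductive hypothesis is unavailable. The parenthetical that an increasing path ``never creates a cycle that could lengthen it'' is not an argument --- paths longer than a geodesic with pairwise distinct labels exist in abundance, and excluding increasing ones is part of what must be proved. Even for increasing geodesics, the claim that ``minimality forces $k_1=k_2=k$'' is a non sequitur: nothing you have established prevents an increasing geodesic from starting with a non-minimal label $\tau_{k_1}$ with $k_1>k$; the diamond lemma only converts a \emph{decreasing} two-step path into an increasing one and says nothing about this configuration. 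This is precisely why \cite{BFP-tilted-Bruhat} do not argue path-by-path for a fixed ordering: they prove the ordered product $\prod_\gamma(1+Q_\gamma)$ is independent of the reflection ordering and then evaluate it for one convenient ordering (in effect the lexicographic one used in the proof of Theorem~\ref{thm:weight-distance}), so uniqueness and the length statement follow from positivity of the weights rather than from an exchange induction.

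A secondary caveat: your diamond lemma is true, but ``reduces to a finite check'' in the rank-two subsystem is misleading. Whether $u\to ut_{\alpha'}$ and $ut_{\alpha'}\to ut_\alpha t_\beta$ are edges of $\Gamma_n$ depends on all the values of $u$ at positions lying between the supports of the transpositions, not only on the abstract $A_2$ or $A_1\times A_1$ subsystem; verifying the exchange is an honest case analysis in $S_n$ (this is exactly the content of the key lemma of \cite{BFP-tilted-Bruhat}), not a dihedral computation. So to make your route work you would need both that full case analysis and a different mechanism --- for instance an induction on the position in the reflection ordering, or the operator identity itself --- to handle uniqueness and to rule out longer increasing paths.
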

Therefore, to prove Theorem~\ref{thm:weight-distance}, we pick a specific reflection ordering, and find its corresponding directed path and compute its weight, recalling that shortest paths are precisely minimal weight paths (Lemma~\ref{lem:shortest-length-equals-weight}). We need one more definition.

\begin{defin}
For $r\in [n]$, let $\leq_r$ be the \emph{shifted linear order} on $[n]$ given by \[r<_r r+1 <_r \cdots <_r n <_r 1 <_r \cdots <_r r-1.\] For $A = \{a_1 <_r \cdots <_r a_k\},B = \{b_1<_r \cdots <_r b_k\} \subset [n]$, define the \emph{shifted Gale order} $\leq_r$ as
\[A\leq_r B \iff a_i \leq_r b_i \text{ for all }i\in [k].\]
\end{defin}

\begin{proof}[Proof of Theorem~\ref{thm:weight-distance}]
Choose the reflection ordering $\tau=e_1-e_2,e_1-e_3,\ldots,e_1-e_n,e_2-e_3,\ldots e_2-e_{n},\ldots,e_{n-1}-e_n$ that starts with all positive roots involving $e_1$ and then $e_2$ and so on. This is a valid reflection order because for any $i<j<k$, $e_i-e_j$, $e_i-e_k$ and $e_j-e_k$ appear in this order. One can also check that this ordering corresponds to the reduced word $w_0=(s_1s_2\cdots s_{n-1})(s_1s_2\cdots s_{n-2})\cdots (s_1)$. 

Fix $u$ and $v$. Theorem~\ref{thm:shortest-path-BFP} says that there is a unique directed path from $u$ to $v$ in $\Gamma_n$ by $u\rightarrow ut_{i_1j_1}\rightarrow ut_{i_1j_1}t_{i_2j_2}\rightarrow\cdots\rightarrow ut_{i_1j_1}\cdots t_{i_\ell j_{\ell}}=v$ such that $e_{i_1}-e_{j_1},\ldots,e_{i_{\ell}}-e_{j_{\ell}}$ appear in the order of $\tau$. We describe this unique directed path explicitly. Consider the first $n-1$ roots $e_1-e_2,\ldots,e_1-e_{n}$. After choosing $e_1-e_{p_1},e_1-e_{p_2},\ldots,e_1-e_{p_m}$ such that we have a directed path $u\rightarrow ut_{1p_1}\rightarrow\cdots\rightarrow ut_{1p_1}\cdots t_{1p_m}=u'$, we must need $u'(1)=v(1)=v_1$ since the rest of positive roots do not involve the index $1$. At the same time, as long as $u'(1)=v(1)$, we can continue finding the path via induction on $n$.

We can choose $p_i$ to be the smallest index greater than $p_{i-1}$ (with the convention that $p_0=1$) such that $u(p_i)>_{v_1+1}u(p_{i-1})$ in the shifted linear order on $[n]$ where $v_1$ is declared the largest number. Such choices work because by construction, there is an edge from $ut_{1p_1}\cdots t_{1p_{i-1}}$ to $ut_{1p_1}\cdots t_{1p_i}$ in $\Gamma_n$ and we inevitably end up at $p_m=u^{-1}(v_1)$ since $v_1$ is the largest number in this linear order. Here is an example with $v_1=4$:
\[\begin{tabular}{c|c|c|c}
& permutation & root & weight \\ \hline
$u$ & $\underline{\textbf{6}}5\underline{\textbf{7}}913428$ & $e_1-e_3$ & 1  \\ \hline
 & $\underline{\textbf{7}}56\underline{\textbf{9}}13428$ & $e_1-e_4$ & 1 \\ \hline
 & $\underline{\textbf{9}}567\underline{\textbf{1}}3428$ & $e_1-e_5$ & $q_1q_2q_3q_4$ \\\hline
 & $\underline{\textbf{1}}5679\underline{\textbf{3}}428$ & $e_1-e_6$ & 1 \\\hline
 & $\underline{\textbf{3}}56791\underline{\textbf{4}}28$ & $e_1-e_7$ & 1 \\\hline
 & $\textbf{4}56791328$ & & \\
 & $\cdots$ & & \\\hline
$v$ & $\textbf{4}\rule{1.46cm}{0.15mm}$ & & \\\hline
\end{tabular}.\]

We now examine how the weight accumulated. The weight of this path $u\rightarrow\cdots\rightarrow u'$ equals $q_1q_2\cdots q_{p_k-1}$ if $u(p_k)<u(p_{k-1})$ and $u(p_k)>_{v_1+1}u(p_{k-1})$, and equals $1$ if no such $k$ exists, i.e. $u(1)<v(1)$. It's clear that at most one such $k$ exists which is intuitively at the place where the shifted linear order ``wraps around". Denote this weight by $\wt_1(u,v)$. Recall that the weight of a minimal weight path from $u$ to $v$ is denoted as $q^{d(u,v)}$. Also let $\depth(u,v)$ be a vector such that $\depth_j(u,v)=\depth(u[j],v[j])$ as in Definition~\ref{def:up-down-path}. 

Our goal is to show that $d(u,v)=\depth(u,v)$ and we use induction on $n$. By Theorem~\ref{thm:shortest-path-BFP}, $\wt_1(u,v)q^{d(u',v)}=q^{d(u,v)}$, and by induction hypothesis, $q^{d(u',v)}=q^{\depth(u',v)}$ since $u'(1)=v(1)$. It remains to prove that 
\begin{equation}\label{eqn:induction-on-depth}
\wt_1(u,v)q^{\depth(u',v)}=q^{\depth(u,v)}.
\end{equation}
We study Equation~\eqref{eqn:induction-on-depth} coordinate by coordinate.

\

\noindent\textbf{Case \RNum{1}:} ($u_1>v_1$). There exists a unique $k$ such that $u(p_k)<u(p_{k-1})$ and $u(p_k)>_{v_1+1}u(p_{k-1})$, with $\wt_1(u,v)=q_1q_2\cdots q_{p_k-1}$. In fact, all of $u(1),\ldots,u(p_{k}-1)$ are strictly greater than $v_1$ by construction. Now compare the exponent of $q^j$ on both sides of Equation~\eqref{eqn:induction-on-depth}. In other words, we will compare the two lattice paths $P(u[j],v[j])$ and $P(u'[j],v[j])$. 

For $j<p_k$, $u'[j]$ is obtained from $u[j]$ by deleting its largest element and replacing it by $v(1)$ which is smaller than all values in $u[j]$. Thus, the path $P(u[j],v[j])$ is strictly below the $x$-axis when the $x$-coordinate is $v(1)$, and is weakly above the $x$-axis when the $x$-coordinate is $\max u[j]$. The lattice path $P(u'[j],v[j])$ is obtained from $P(u[j],v[j])$ by moving the subpath from $(v_1,-)$ to $(\max u[j],-)$ one step up. As a result, $\depth(u[j],v[j])=\depth(u'[j],v[j])+1$ as desired. A cartoon for visualization for this scenario is seen in Figure~\ref{fig:path-depth-change-1}.
\begin{figure}[h!]
\centering
\begin{tikzpicture}[scale=0.55]
\node at (0,0) {$\bullet$};
\node at (1,-1) {$\bullet$};
\node at (2,-1) {$\bullet$};
\node at (3,-2) {$\bullet$};
\node[right] at (3,-2) {$(v_1,-)$};
\node at (6,2) {$\bullet$};
\node[above] at (6,2) {$(\max u[j],-)$};
\node at (7,2) {$\bullet$};
\node at (8,1) {$\bullet$};
\node at (9,0) {$\bullet$};
\draw(0,0)--(1,-1)--(2,-1)--(3,-2);
\draw(6,2)--(7,2)--(9,0);
\draw[dashed](0,0)--(9,0);
\coordinate (A) at (3,-2);
\coordinate (B) at (6,2);
\draw[red,thick] (A) .. controls (4,2) and (2,1) .. (B);
\def\a{12}
\node at (\a,0) {$\bullet$};
\node at (\a+1,-1) {$\bullet$};
\node at (\a+2,-1) {$\bullet$};
\node at (\a+3,-1) {$\bullet$};
\node[right] at (\a+3,-1) {$(v_1,-)$};
\node at (\a+6,3) {$\bullet$};
\node[left] at (\a+6,3) {$(\max u[j],-)$};
\node at (\a+7,2) {$\bullet$};
\node at (\a+8,1) {$\bullet$};
\node at (\a+9,0) {$\bullet$};
\draw(\a+0,0)--(\a+1,-1)--(\a+2,-1)--(\a+3,-1);
\draw(\a+6,3)--(\a+7,2)--(\a+9,0);
\draw[dashed](\a+0,0)--(\a+9,0);
\coordinate (A2) at (\a+3,-1);
\coordinate (B2) at (\a+6,3);
\draw[red,thick] (A2) .. controls (\a+4,3) and (\a+2,2) .. (B2);
\end{tikzpicture}
\caption{A comparison for $P(u[j],v[j])$ (left) and $P(u'[j],v[j])$ (right) for $j<p_k$ in case 1 of the proof of Theorem~\ref{thm:weight-distance} where the red curve stands for any lattice subpath}
\label{fig:path-depth-change-1}
\end{figure}
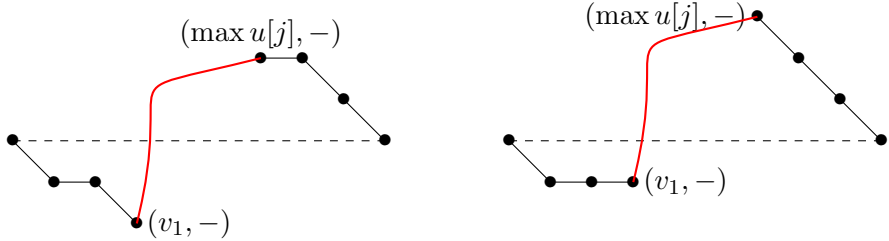

For $j\geq p_m$, $u'[j]=u[j]$ so $\depth(u'[j],v[j])=\depth(u[j],v[j])$ as desired.

For $p_k\leq j<p_m$, $v_1\notin u[j]$ and $u'[j]$ is obtained from $u[j]$ by deleting the largest number smaller than $v_1$, called $b$, and replacing it by $v_1$. Thus, $P(u'[j],v[j])$ and $P(u[j],v[j])$ only differ from step $b$ to step $v_1$. During this period,
\begin{align*}
P(u[j],v[j]):\ & (\nearrow\text{ or }\rightarrow)\ +\ (\text{a sequence of }\rightarrow\text{ and }\searrow\text{'s})\ +\ (\searrow), \\
P(u'[j],v[j]):\ & (\rightarrow\text{ or }\searrow)\ +\ (\text{a sequence of }\rightarrow\text{ and }\searrow\text{'s})\ +\ (\rightarrow).
\end{align*}
This local change does not modify the depth. So $\depth(u[j],v[j])=\depth(u'[j],v[j])$.

\

\noindent\textbf{Case \RNum{2}:} ($u_1\leq v_1$). Here $\wt_1(u,v)=1$ and we need $\depth(u'[j],v[j])=\depth(u[j],v[j])$ for all $j$. The sub-case for $j\geq p_m$ is exactly the same as the sub-case for $j\geq p_m$ in Case 1, and the sub-case for $j<p_m$ is exactly the same as the sub-case for $p_k\leq j<p_m$ in Case 1. 

The induction step goes through and we conclude that $d(u,v)=\depth(u,v)$.
\end{proof}

\subsection{Characterization of the tilted Bruhat order}
A crucial consequence of Theorem~\ref{thm:weight-distance} is a succinct description (Theorem~\ref{thm:tilted-criterion}) of the tilted Bruhat order defined by Brenti-Fomin-Postnikov. We now build up some intuition regarding the relationship between the shifted Gale order $\leq_r$ and the lattice path construction (Definition~\ref{def:up-down-path}).

\begin{lemma}\label{lemma:r-comparable}
For all $A,B \subset [n]$ with $|A| = |B|$, there exists $r\in [n]$ such that $A\leq_r B$. In fact, $A\leq_r B$ if and only if the lattice path $P(A,B)$ passes through $(r-1,-\depth(A,B))$.
\end{lemma}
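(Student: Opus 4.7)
The plan is to express both sides of the biconditional in terms of the height function $h(p) := |A \cap [p]| - |B \cap [p]|$ for $p = 0, 1, \ldots, n$. By the construction of $P(A,B)$, the $i$-th step changes $h$ by $+1$, $-1$, or $0$ according to whether $i \in A \setminus B$, $i \in B \setminus A$, or $i$ is in $A \cap B$ or $A^c \cap B^c$. Hence $P(A,B)$ passes through exactly the points $(p, h(p))$, so $\depth(A,B) = -\min_p h(p)$, and the statement ``$P(A,B)$ passes through $(r-1, -\depth(A,B))$'' becomes the condition $h(r-1) \leq h(p)$ for every $p \in \{0, 1, \ldots, n\}$.

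Next I would reformulate $A \leq_r B$ in the same framework. Let $C_j^r := \{r, r+1, \ldots, r+j-1\}$ with indices reduced modulo $n$ into $[n]$, so $C_j^r$ is the set of the first $j$ elements of $[n]$ in the shifted linear order $<_r$. The standard equivalence between Gale order and cardinalities of initial segments, applied to $<_r$, yields
\begin{equation*}
A \leq_r B \iff |A \cap C_j^r| \geq |B \cap C_j^r| \text{ for all } j = 0, 1, \ldots, n.
\end{equation*}

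The core of the argument is then a direct computation of $h(p) - h(r-1)$. For $p \geq r-1$, the interval $[r, p]$ equals $C_{p-r+1}^r$, giving $h(p) - h(r-1) = |A \cap C_{p-r+1}^r| - |B \cap C_{p-r+1}^r|$. For $p < r-1$, the complement of $[p+1, r-1]$ inside $[n]$ equals $C_{n-r+1+p}^r$; combined with $|A| = |B|$, this gives $h(p) - h(r-1) = |A \cap C_{n-r+1+p}^r| - |B \cap C_{n-r+1+p}^r|$. As $p$ ranges over $\{0, \ldots, n\}$, the index $j$ on the right traverses all of $\{0, 1, \ldots, n-1\}$, and the case $j = n$ is trivial since $C_n^r = [n]$. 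Hence $h(r-1) \leq h(p)$ for every $p$ is exactly the Gale-order condition $A \leq_r B$, proving the biconditional.

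The existence of some $r \in [n]$ with $A \leq_r B$ is then immediate: choose $p^{\ast} \in \{0, 1, \ldots, n-1\}$ at which $h$ attains its minimum, and set $r = p^{\ast}+1$. Such a $p^{\ast}$ exists because $h(0) = h(n) = 0$, so if the minimum is attained only at $p = n$ then $\depth(A,B) = 0$ and $r = 1$ works. The only real obstacle is the indexing bookkeeping required to identify ordinary intervals in $[n]$ with cyclic initial segments $C_j^r$, which is handled by the case split $p \geq r-1$ versus $p < r-1$ described above.
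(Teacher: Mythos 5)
Your proof is correct and takes essentially the same route as the paper. Both arguments rest on the two observations that the Gale order is an initial-segment condition (equivalently, a Dyck-path condition on $P(A,B)$) and that the shifted order $\leq_r$ amounts to a cyclic rotation; the paper phrases the second observation by cyclically permuting the steps of the lattice path and checking that the rotated path is Dyck, whereas you phrase it by directly comparing heights $h(p)-h(r-1)$ with cyclic initial-segment counts, which is the same computation presented a little more explicitly.
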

\begin{proof}
Note that $A\leq B$ in the Gale order if and only if the lattice path $P(A,B)$ does not go below the $x$-axis, i.e. $P(A,B)$ is a Dyck path. Likewise, $A\leq_r B$ if and only if the lattice path $P(A+r-1,B+r-1)$ does not go below the $x$-axis, where $A+r-1=\{a+r-1\:|\: a\in A\}$ where the values are taken modulo $n$. Therefore, for general $A$ and $B$ with $|A|=|B|$, let the steps of $P(A,B)$ be $g_1,g_2,\ldots,g_n$ where each $g_i\in\{\rightarrow,\nearrow,\searrow\}$ based on Definition~\ref{def:up-down-path}. Then the lattice path $P(A+r-1,B+r-1)$ is then constructed via the steps $g_r,g_{r+1},\ldots,g_n,g_1,\ldots,g_{r-1}$ in this order. We now see that $P(A+r-1,B+r-1)$ does not go below the $x$-axis if and only if the lattice path $P(A,B)$ goes through its lowest point at $x$-coordinate equals $r-1$. 
\end{proof}

If $A\leq_rB$, let $[A,B]_r:=\{I:A\leq_rI\leq_rB\}$ denote the interval in shifted Gale order $\leq_r$. The following lemma explains the relation between different shifted Gale orders.

\begin{lemma}\label{lemma:both-r-comparable}
    For all $A,B\subset [n]$ and $|A|=|B|$, if there exists $r\neq r'\in [n]$ such that $A\leq_rB$ and $A\leq_{r'}B$, then the intervals $[A,B]_r=[A,B]_{r'}$. Moreover, for any $I\in[A,B]_r$,
    \[
        \#(A\cap [r,r')_c) = \#(I\cap [r,r')_c)=\#(B\cap [r,r')_c).
    \]
\end{lemma}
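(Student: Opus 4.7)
The plan is to reduce to the case $r=1$ by cyclic symmetry and then argue purely in terms of the lattice paths of Definition~\ref{def:up-down-path}. Cyclically shifting $[n]$ by $1-r$ (i.e.\ applying $i\mapsto i+1-r$ modulo $n$) carries the ordering $\leq_r$ to the ordinary Gale order $\leq_1$ and carries $\leq_{r'}$ to $\leq_{r'-r+1}$, while taking the cyclic interval $[r,r')_c$ to the ordinary initial segment $\{1,\dots,r'-r\}$. So it suffices to prove the statement when $r=1$: assume $A\leq_1 B$ and $A\leq_{r'}B$ with $r'>1$, and write $\bar r:=r'-1$. By Lemma~\ref{lemma:r-comparable} the hypotheses say exactly that $P(A,B)$ is a Dyck path passing through $(\bar r,0)$.

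The engine of the argument is the trivial additive identity $h_{A,B}(k)=h_{A,I}(k)+h_{I,B}(k)$, where $h_{X,Y}(k):=|X\cap[k]|-|Y\cap[k]|$ is the height of $P(X,Y)$ at $x=k$. Using this I will prove both inclusions in $[A,B]_1=[A,B]_{r'}$ together with the moreover clause. If $I\in[A,B]_1$, then $P(A,I)$ and $P(I,B)$ are Dyck, so $h_{A,I}(\bar r),h_{I,B}(\bar r)\geq 0$; but they sum to $h_{A,B}(\bar r)=0$, forcing each to vanish. This simultaneously gives $|I\cap[\bar r]|=|A\cap[\bar r]|$ and shows that $P(A,I),P(I,B)$ pass through $(\bar r,0)$, which by Lemma~\ref{lemma:r-comparable} yields $A\leq_{r'}I\leq_{r'}B$, i.e.\ $I\in[A,B]_{r'}$. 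Conversely, if $I\in[A,B]_{r'}$, then Lemma~\ref{lemma:r-comparable} gives $h_{A,I}(\bar r)=-\depth(A,I)$ and $h_{I,B}(\bar r)=-\depth(I,B)$, so $-\depth(A,I)-\depth(I,B)=h_{A,B}(\bar r)=0$; non-negativity of depth then forces both depths to be zero, so $P(A,I)$ and $P(I,B)$ are Dyck, meaning $A\leq I\leq B$ and $I\in[A,B]_1$. Finally, applied to any $I\in[A,B]_1$, the identity $h_{A,I}(\bar r)=0$ together with the special case $I=B$ gives the full moreover equality $|A\cap[\bar r]|=|I\cap[\bar r]|=|B\cap[\bar r]|$, which translates back to the cyclic interval identity after undoing the shift.

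There is no real obstacle here once Lemma~\ref{lemma:r-comparable} is in hand: the entire argument reduces to the observation that a sum of two non-negative quantities is zero only when both vanish, combined with the additivity of the height functions. The only care needed is bookkeeping in the cyclic shift reduction, ensuring that the cyclic interval $[r,r')_c$ matches up with the initial segment $\{1,\dots,r'-r\}$ after shifting.
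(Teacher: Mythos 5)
Your proof is correct and follows essentially the same route as the paper's: both translate the hypotheses via Lemma~\ref{lemma:r-comparable} into statements about the lattice paths $P(A,I)$, $P(I,B)$, $P(A,B)$ attaining their minima at the relevant $x$-coordinates, and then squeeze using the inequalities forced by the shifted Gale comparisons. Your explicit introduction of the additive height function $h_{X,Y}$ packages the same argument a bit more symmetrically between the two inclusions, but the underlying idea is identical.
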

\begin{proof}
    Without loss of generality assume $r<r'$. Since $A\leq_r B$ and $A\leq_{r'} B$, by Lemma~\ref{lemma:r-comparable}, the lattice path $P(A,B)$ passes through $(r,-d)$ and $(r',-d)$, where $d=\depth(A,B)$. Since they have the same $y$-coordinate, there are the same number of $\nearrow$ and $\searrow$ between the two points and thus
    \[\#(A\cap [r,r')_c) = \#(B\cap [r,r')_c).\]
    For any $I\in[A,B]_r$, we have 
    \[I\geq_r A\implies \#(I\cap [r,r')_c)\leq\#(A\cap [r,r')_c),\] 
    \[I\leq_r B\implies\#(I\cap [r,r')_c)\geq\#(B\cap [r,r')_c).\]
    Therefore,
    \[\#(I\cap [r,r')_c) = \#(A\cap [r,r')_c) = \#(B\cap [r,r')_c).\]
    As a result, the lattice path $P(A,I)$ have the same $y$-coordinate at $x=r$ and $r'$. Since $A\leq_r I$, $P(A,I)$ reaches its lowest point at both $x$-coordinate $r$ and $r'$. Similarly, $P(I,B)$ reaches its lowest point at both $x$-coordinate $r$ and $r'$. By Lemma~\ref{lemma:r-comparable}, $A\leq_{r'}I\leq_{r'}B$. Therefore $[A,B]_{r}\subseteq[A,B]_{r'}$. The opposite direction follows by same reasoning.
\end{proof}

\begin{theorem}\label{thm:tilted-criterion}
For $u,v,w\in S_n$, the following are equivalent:
\begin{enumerate}
\item $w\leq_u v$;
\item $w\in [u,v]$;
\item for all sequence $\mathbf{a} = a_1\ldots a_{n-1}$ such that $\{u_1,\ldots,u_k\}\leq_{a_k} \{v_1,\ldots,v_k\}$ and for all $k\in [n-1]$, we have $\{u_1,\ldots,u_k\}\leq_{a_k}\{w_1,\ldots w_k\}\leq_{a_k} \{v_1,\ldots,v_k\}$;
\item there exists a sequence $\mathbf{a} = a_1\ldots a_{n-1}$ such that for all $k\in [n-1],\{u_1,\ldots,u_k\}\leq_{a_k}\{w_1,\ldots w_k\}\leq_{a_k} \{v_1,\ldots,v_k\}$.
\end{enumerate}
\end{theorem}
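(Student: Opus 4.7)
The plan is to chain together the four conditions using Theorem~\ref{thm:weight-distance} as the bridge between the length condition and the lattice-path geometry encoded by the shifted Gale order.

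The equivalence (1)$\iff$(2) is essentially by definition: by Definition~\ref{def:tilted-Bruhat-order}, $w\leq_u v$ says exactly that $w\in[u,v]_u=[u,v]$, using the Remark that the subscript can be dropped.

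The core of the theorem is (1)$\iff$(4). I would translate (1) via Theorem~\ref{thm:weight-distance}: the additivity of lengths $\ell(u,w)+\ell(w,v)=\ell(u,v)$ is equivalent to the coordinate-wise depth additivity
\[\depth(u[k],w[k])+\depth(w[k],v[k])=\depth(u[k],v[k])\quad\text{for all }k\in[n-1].\]
(Here the "$\leq$" direction always holds by concatenating paths and applying Lemma~\ref{lem:shortest-length-equals-weight}.) The key observation is that the height of the path $P(A,B)$ at $x=j$ is $\sum_{i\leq j}(\chi_A(i)-\chi_B(i))$, so heights are pointwise additive: $\mathrm{height}_{P(A,C)}(j)=\mathrm{height}_{P(A,B)}(j)+\mathrm{height}_{P(B,C)}(j)$. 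Since each summand is bounded below by $-\depth$ of the corresponding path, one has $\depth(A,B)+\depth(B,C)\geq\depth(A,C)$, with equality iff there is a common coordinate $r-1$ at which \emph{both} sub-paths attain their minimum. By Lemma~\ref{lemma:r-comparable}, this common coordinate $r$ is precisely a witness to $A\leq_r B\leq_r C$. Applied to $A=u[k]$, $B=w[k]$, $C=v[k]$ for each $k$, this gives (1)$\iff$(4).

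Finally, for (3)$\iff$(4): the direction (3)$\implies$(4) is immediate because Lemma~\ref{lemma:r-comparable} guarantees at least one admissible sequence $\mathbf{a}$. For (4)$\implies$(3), fix the sequence $\mathbf{a}$ provided by (4) and let $\mathbf{a}'$ be any other sequence with $u[k]\leq_{a_k'}v[k]$. From (4) and transitivity we also have $u[k]\leq_{a_k}v[k]$, so $u[k]$ and $v[k]$ are comparable in both shifted Gale orders. Lemma~\ref{lemma:both-r-comparable} then yields $[u[k],v[k]]_{a_k}=[u[k],v[k]]_{a_k'}$, so $w[k]\in[u[k],v[k]]_{a_k}$ forces $w[k]\in[u[k],v[k]]_{a_k'}$, which is exactly the conclusion of (3).

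The main obstacle I anticipate is pinning down the equality case in the depth-additivity inequality cleanly: one must argue that if the minima of $P(A,B)$ and $P(B,C)$ are attained on disjoint sets of $x$-coordinates, then the minimum of $P(A,C)$ is strictly larger than $-\depth(A,B)-\depth(B,C)$. This is a short but subtle check using the pointwise additivity of heights, and it is what connects the combinatorial length identity to the existence assertion about the common index $r$ in the shifted Gale order.
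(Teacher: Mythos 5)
Your proof is correct, but it takes a genuinely different route from the paper for the key implication.

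The paper proves $(1)\implies(3)$ by applying the long-cycle automorphism $\tau^{1-a_k}$ of the unweighted quantum Bruhat graph (\Cref{lem:cyclic-symmetry}): under this rotation the $k$th coordinate of $d(\tau^{1-a_k}u,\tau^{1-a_k}v)$ becomes $0$ (since $u[k]\leq_{a_k}v[k]$), so the additivity $0=0+0$ forces the $k$th coordinates of $d(\tau^{1-a_k}u,\tau^{1-a_k}w)$ and $d(\tau^{1-a_k}w,\tau^{1-a_k}v)$ to vanish, which translates directly to the Gale-order comparabilities. This sidesteps any analysis of the equality case of the depth triangle inequality by reducing to depth $0$. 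You instead prove $(1)\iff(4)$ directly: you use the pointwise additivity of heights, $\mathrm{height}_{P(A,C)}(j)=\mathrm{height}_{P(A,B)}(j)+\mathrm{height}_{P(B,C)}(j)$, to get $\depth(A,B)+\depth(B,C)\geq\depth(A,C)$ with equality precisely when both sub-paths attain their minima at some common $x$-coordinate $r-1$, and then \Cref{lemma:r-comparable} converts this into the shifted Gale chain $A\leq_r B\leq_r C$. This is a more self-contained combinatorial argument that does not invoke the cyclic automorphism at all (though you still rely on \Cref{lem:shortest-length-equals-weight} to pass between lengths and weights, just as the paper does). For $(4)\implies(3)$ you use \Cref{lemma:both-r-comparable}, which the paper does not need in its proof of this theorem since it establishes $(1)\implies(3)$ directly; your route is a clean alternative. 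The one step you flag as a concern, the equality case of the depth subadditivity, is handled correctly: equality forces both summands at the minimizing coordinate to equal their respective minima because each summand is individually bounded below.
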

\begin{proof}
$(1)\iff (2)$ is clear by definition.
We will show that $(1)\implies (3)\implies (4)\implies (1)$.  $(3)\implies (4)$ is immediate given that Lemma~\ref{lemma:r-comparable} establishes the existence of $a_k$'s satisfying $\{u_1,\ldots,u_k\}\leq_{a_k}\{v_1,\ldots,v_k\}$. 

\

\noindent $(1)\implies (3):$ 
Recall that the long cycle $\tau=(123\cdots n)\in S_n$ is an automorphism of the unweighted quantum Bruhat graph (Lemma~\ref{lem:cyclic-symmetry}). Fix a sequence $\mathbf{a}=a_1\ldots,a_{n-1}$ such that $\{u_1,\ldots,u_k\}\leq_{a_k}\{v_1,\ldots,v_k\}$ for all $k$. As $w\leq_u v$, there is a shortest path from $u$ to $v$ that passes through $w$. As shortest length paths equal minimal weight paths (Lemma~\ref{lem:shortest-length-equals-weight}), we also have $d(u,v)=d(u,w)+d(w,v)$, where $d(u,v)$ is the exponent vector of the weight of any shortest path from $u$ to $v$. Since $\tau$ is an automorphism, the same logic also applies to $\tau^{1-a_k}w\leq_{\tau^{1-a_k}u}\tau^{1-a_k}v$ so \[d(\tau^{1-a_k}u,\tau^{1-a_k}v)=d(\tau^{1-a_k}u,\tau^{1-a_k}w)+d(\tau^{1-a_k}w,\tau^{1-a_k}v).\]
By Theorem~\ref{thm:weight-distance}, the $k^{th}$ coordinate of $d(\tau^{1-a_k}u,\tau^{1-a_k}v)$ is given by the depth of the lattice path $P(\tau^{1-a_k}u[k],\tau^{1-a_k}v[k])$ which is $0$ because $\{u_1,\ldots,u_k\}\leq_{a_k}\{v_1,\ldots,v_k\}$. As a result, the $k^{th}$ coordinate of both $d(\tau^{1-a_k}u,\tau^{1-a_k}w)$ and $d(\tau^{1-a_k}w,\tau^{1-a_k}v)$ must be $0$ as well, which translate to $\{u_1,\ldots,u_k\}\leq_{a_k}\{w_1,\ldots,w_k\}$ and $\{w_1,\ldots,w_k\}\leq_{a_k}\{v_1,\ldots,v_k\}$.

\

\noindent $(4)\implies (1):$ Fix such a sequence $\mathbf{a}$ and look at a coordinate $k$. Since $u[k]\leq_{a_k} w[k]\leq_{a_k} v[k]$, the three lattice paths $P(u[k],v[k])$, $P(u[k],w[k])$ and $P(w[k],v[k])$ all reach their lowest points at $x$-coordinate $a_k-1$ by Lemma~\ref{lemma:r-comparable}. Then by construction (\Cref{def:up-down-path}),
\[\depth(u[k],v[k])=|\{b\in v[k]: b<a_k\}|-|\{b\in u[k]: b<a_k\}|\geq 0.\]
This description allows us to conclude $\depth(u[k],v[k])=\depth(u[k],w[k])+\depth(w[k],v[k])$. Then $d(u,v)=d(u,w)+d(w,v)$ by Theorem~\ref{thm:weight-distance} which implies $w\leq_u v$ by Lemma~\ref{lem:shortest-length-equals-weight} and Definition~\ref{def:tilted-Bruhat-order}. 
\end{proof}

The following is immediate from Theorem~\ref{thm:tilted-criterion}:
\begin{cor}\label{cor:xy}
    Let $[x,y]\subset [u,v]$ be any subinterval. Let $\mathbf{a}$ be any sequence such that $u[k]\leq_{a_k} v[k]$ for all $k\in [n-1]$, then for all $k\in [n-1]$,
    \[x[k]\leq_{a_k} y[k].\]
\end{cor}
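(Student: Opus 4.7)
The plan is to invoke Theorem~\ref{thm:tilted-criterion}, specifically the implication $(1)\Rightarrow(3)$, twice in a row, so that the sequence $\mathbf{a}$ propagates from the ambient interval $[u,v]$ down into the subinterval $[x,y]$. No new combinatorial input is needed beyond what that theorem already provides.

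First, since $y\in[u,v]$ we have $y\leq_u v$, and by hypothesis $u[k]\leq_{a_k}v[k]$ for every $k\in [n-1]$. Applying $(1)\Rightarrow(3)$ of Theorem~\ref{thm:tilted-criterion} to the triple of permutations where $y$ plays the role of the intermediate element $w$, I conclude
\[
u[k]\leq_{a_k} y[k]\leq_{a_k} v[k]\qquad\text{for all }k\in[n-1].
\]

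Second, since $[x,y]$ is a subinterval of $[u,v]$, we have $x\leq_u y$. I then apply $(1)\Rightarrow(3)$ of Theorem~\ref{thm:tilted-criterion} again, this time to the triple consisting of $u$ as the base, $y$ in the role of the upper endpoint, and $x$ in the role of the intermediate element. The hypothesis on the sequence demanded by condition~(3) in this new application is $u[k]\leq_{a_k} y[k]$ for every $k$, which is exactly what the first step delivered. The conclusion of condition~(3) is
\[
u[k]\leq_{a_k} x[k]\leq_{a_k} y[k]\qquad\text{for all }k\in[n-1],
\]
whose second inequality is precisely the content of the corollary.

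The only point that needs a moment's thought is checking that the sequence $\mathbf{a}$ furnished by the outer interval $[u,v]$ remains a legitimate ``witness'' when we zoom in to the smaller interval $[u,y]$; this is exactly what the first step achieves by promoting $u[k]\leq_{a_k} v[k]$ to $u[k]\leq_{a_k} y[k]$. I do not foresee any genuine obstacle here: the whole content of the corollary is the observation that the equivalences of Theorem~\ref{thm:tilted-criterion} are compatible with nesting of tilted Bruhat intervals.
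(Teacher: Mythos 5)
Your proof is correct and is, as far as one can tell, exactly the argument the paper has in mind when it calls the corollary ``immediate from Theorem~\ref{thm:tilted-criterion}'': first apply $(1)\Rightarrow(3)$ to $y\in[u,v]$ to learn that $\mathbf{a}$ is also a witness for the pair $(u,y)$, then apply $(1)\Rightarrow(3)$ again to $x\leq_u y$ with this promoted witness. The only small point worth making explicit in the writeup is that $[x,y]\subset[u,v]$ gives $x\leq_u y$ (from $x,y\in[u,v]$ and the fact that $[x,y]$ is the same interval regardless of the choice of compatible base), which is the hypothesis needed for the second application.
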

\section{Tilted Richardson varieties}\label{sec:tilted-Richardson-definition}
From this point on, we turn our attention to geometry. The goal of this section is to give three equivalent definitions to \emph{(open) tilted Richardson varieties} $\mathcal{T}_{u,v}^{\circ}$ and $\mathcal{T}_{u,v}$ for any pair of permutations $u,v\in S_n$. These varieties of interest will later be proved to provide a geometric meaning to the tilted Bruhat order (\Cref{thm:Tunion}). In particular, if $u\leq v$ in the Bruhat order, the (open) tilted Richardson varieties coincide with the usual (open) Richardson varieties $\mathcal{R}_{u,v}^{\circ}$ and $\mathcal{R}_{u,v}$. These three equivalent definitions use:
\begin{enumerate}
\item rank conditions (\Cref{def:main});
\item cyclically rotated Richardson varieties in the Grassmannian (\Cref{thm:altdefT});
\item multi-Pl\"ucker coordinates (\Cref{thm:altdefTplucker}).
\end{enumerate}
As an intermediate step, we first work with $\mathcal{T}^\circ_{u,v,\mathbf{a}}$ and $\mathcal{T}_{u,v,\mathbf{a}}$ and then show that they do not depend on the choice of sequence $\mathbf{a}$ (\Cref{cor:independent-a}). Here, given $u,v\in S_n$ and a sequence $\mathbf{a} = (a_1, \ldots, a_{n-1})$, we write $u\leq_{\mathbf{a}} v$ if $u[k]\leq_{a_k}v[k]$ for all $k\in [n-1]$.


When $a\neq b\in[n]$, recall the definition of cyclic intervals $[a,b]_c,[a,b)_c,(a,b]_c,(a,b)_c$ in Definition~\ref{def:cyclicinterval}. For convenience, we extend the definition by setting $[j,0]_c :=[j,n] =  \{j,j+1,\dots,n\}, [j,j]_c := \{j\}$ and $[j,j)_c = (j,j]_c := \emptyset$.

\subsection{The first definition: rank conditions}

Our first definition is motivated by our characterization of tilted Bruhat order (\Cref{thm:tilted-criterion}).
\begin{defin}\label{def:main}
Define the \emph{open tilted Richardson variety} with respect to $\mathbf{a}$ to be:
\begin{align}\label{eqn:tiltRichOpen}
    \mathcal{T}_{u,v,\mathbf{a}}^\circ = \left\{F_{\bullet} \in \mathrm{Fl}_n:
    \begin{array}{c}
         \dim (\mathrm{Proj}_{[a_i,j]_c}(F_i))= \#\{u[i]\cap [a_i,j]_c\},   \\
         \dim (\mathrm{Proj}_{[j,a_{i}-1]_c}(F_i))= \#\{v[i]\cap [j,a_i-1]_c\} 
    \end{array} 
    \forall i,j\in [n]
     \right\}.
\end{align}
Define the \emph{tilted Richardson variety} with respect to $\mathbf{a}$ to be:
\begin{align}\label{eqn:tiltRich}
    \mathcal{T}_{u,v,\mathbf{a}} = \left\{F_{\bullet} \in \mathrm{Fl}_n:
    \begin{array}{c}
         \dim (\mathrm{Proj}_{[a_i,j]_c}(F_i))\leq \#\{u[i]\cap [a_i,j]_c\},   \\
         \dim (\mathrm{Proj}_{[j,a_{i}-1]_c}(F_i))\leq \#\{v[i]\cap [j,a_i-1]_c\} 
    \end{array} 
    \forall i,j\in [n]
     \right\}.
\end{align}
\end{defin}

Indeed, for any 
$u\leq_{\mathbf{a}}v$, $\dim (\mathrm{Proj}_{[a_i,j]_c}(wB/B)) = \#\{w[i]\cap [a_o,j]_c\}$. Therefore
\[e_w\in \mathcal{T}_{u,v,\mathbf{a}} \iff w\in [u,v].\] 
In particular, $e_w\in \mathcal{T}_{u,v,\mathbf{a}}$ is independent of the choice of $\mathbf{a}$. We will see later (\Cref{cor:independent-a}) that in fact $\mathcal{T}_{u,v,\mathbf{a}}$ and $\mathcal{T}_{u,v,\mathbf{a}}^\circ$ are both independent of $\mathbf{a}$. 

It is easier to visualize tilted Richardson varieties as follows. For any $S\subseteq [n]$, $k\in [n]$ and $M\in \mathrm{Mat}_{n\times n}$, define $r_{S,k}(M)$ to be the rank of the submatrix of $M$ obtained by taking the rows in $S$ and the left $k$ columns. Let $M_F$ be a matrix representative of $F_\bullet$. Then
\begin{align}\label{eqn:Tuvrank}
    F_\bullet \in \mathcal{T}_{u,v,\mathbf{a}} \iff \begin{cases}
        \begin{split}
        r_{[a_i,j]_c,i}(M_F)&\leq r_{[a_i,j]_c,i}(u)\\
        r_{[j,a_{i}-1]_c,i}(M_F)&\leq r_{[j,a_{i}-1]_c,i}(v)
    \end{split}
    \end{cases},\text{ for all }i,j\in [n]
\end{align}
and $F_\bullet\in\mathcal{T}_{u,v,\mathbf{a}}^{\circ}$ if we replace ``$\leq$" with ``$=$" in \eqref{eqn:Tuvrank}.

\begin{ex}\label{ex:4321-3142}
Let $u = 4321$ and $v = 3142$. Then $u\leq_\mathbf{a} v$ for all choices of $a_1\in \{4\}, a_2\in \{2,3,4\}$ and $a_3 \in \{2\}$. See Figure~\ref{fig:4321-3142} for an illustration where $\star$ and $\bullet$ represent $u$ and $v$ respectively. The red horizontal line segment in column $k$ represent the cutoff of $[n]$ under $\leq_{a_k}$ for different choices of $a_k$, where $k\in\{1,2,3\}$.

Let $\mathbf{a} = (4,4,2)$. For $F_\bullet \in \mathcal{T}_{u,v,\mathbf{a}}$, there are $8$ rank conditions imposed on $F_2$ as in \eqref{eqn:tiltRich}. The condition $\dim(\mathrm{Proj}_{\{1,2,3\}}(F_2))\leq 2 = \#v[2]\cap\{1,2,3\}$ is interpreted as the rank of the shaded submatrix in Figure~\ref{fig:4321-3142} being at most $2$, the number of $\bullet$ in said region. 
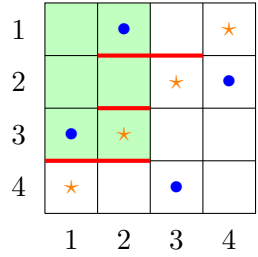
\begin{figure}[ht!]
    \centering
    \begin{tikzpicture}[scale = 0.7]
    \fill [green, opacity  = 0.25] (0,1) rectangle (2,4);
    \draw (0,0)--(4,0)--(4,4)--(0,4)--(0,0);
    \draw (1,0) -- (1,4);
    \draw (2,0) -- (2,4);
    \draw (3,0) -- (3,4);
    \draw (0,1) -- (4,1);
    \draw (0,2) -- (4,2);
    \draw (0,3) -- (4,3);
    \node at (-0.5,0.5) {$4$};
    \node at (-0.5,1.5) {$3$};
    \node at (-0.5,2.5) {$2$};
    \node at (-0.5,3.5) {$1$};
    \node at (0.5,-0.5) {$1$};
    \node at (1.5,-0.5) {$2$};
    \node at (2.5,-0.5) {$3$};
    \node at (3.5,-0.5) {$4$};
    \node[orange] at (0.5,0.5) {$\star$};
    \node[orange] at (1.5,1.5) {$\star$};
    \node[orange] at (2.5,2.5) {$\star$};
    \node[orange] at (3.5,3.5) {$\star$};
    \node[blue] at (0.5,1.5) {$\bullet$};
    \node[blue] at (1.5,3.5) {$\bullet$};
    \node[blue] at (2.5,0.5) {$\bullet$};
    \node[blue] at (3.5,2.5) {$\bullet$};
    \draw[line width=0.55mm, red] (0,1) -- (2,1);
    \draw[line width=0.55mm, red] (1,2) -- (2,2);
    \draw[line width=0.55mm, red] (1,3) -- (3,3);
    \end{tikzpicture}
    \caption{$u = 4321,v = 3142$}
    \label{fig:4321-3142}
\end{figure}
\end{ex}

\begin{remark}
    If $u\leq v$ in strong Bruhat order, namely $u\leq_\mathbf{a} v$ where $\mathbf{a} = (1\ldots 1)$, then $\mathcal{T}_{u,v,\mathbf{a}}^\circ$ and $\mathcal{T}_{u,v,\mathbf{a}}$ are the (open) Richardson variety $\mathcal{R}_{u,v}^\circ$ and $\mathcal{R}_{u,v}$ respectively.
\end{remark}

\subsection{The second definition: pullback of cyclically rotated Richardson varieties}

For $V\in \gr(k,n)$, let $\widetilde{V}$ be any matrix such that $V$ is the column span of $\widetilde{V}$. Let $\chi:\gr(k,n) \rightarrow \gr(k,n)$ be the cyclic rotation such that for
    \begin{equation}\label{eqn:chi}
     \widetilde{V} = \begin{bmatrix}
    \text{---}  & \hspace{-0.2cm}\vec{v}_1 & \hspace{-0.2cm}\text{---} \\
    \text{---}  &\hspace{-0.2cm} \vec{v}_2 & \hspace{-0.2cm}\text{---} \\
    \ & \vdots & \ \\
    \text{---}  & \hspace{-0.2cm}\vec{v}_n & \hspace{-0.2cm}\text{---} \\
\end{bmatrix} \text{, set } 
    \chi(\widetilde{V}) := 
    \begin{bmatrix}
    \text{---}  & \hspace{-0.2cm}\vec{v}_n & \hspace{-0.2cm}\text{---} \\
    \text{---}  &\hspace{-0.2cm} \vec{v}_1 & \hspace{-0.2cm}\text{---} \\
    \ & \vdots & \ \\
    \text{---}  & \hspace{-0.2cm}\vec{v}_{n-1} & \hspace{-0.2cm}\text{---} \\
\end{bmatrix}.
\end{equation}
For $I = \{i_1,\dots,i_k\}\subset [n]$, denote $\chi(I):= \{i_1+1,\dots,i_k+1\}$, identifying $n+1$ with $1$. 

\begin{defin}
    For any $I,J\subset [n]$ with $|I|=|J|$ and $r\in[n]$ such that $I\leq_r J$, define the cyclically rotated (open) Grassmaniann Richardson variety
    \begin{align*}
        \mathcal{R}^{\circ}_{I,J,r}&:=\chi^{r-1}(\mathcal{R}^{\circ}_{\chi^{1-r}(I),\chi^{1-r}(J)}),\\
        \mathcal{R}_{I,J,r}&:=\chi^{r-1}(\mathcal{R}_{\chi^{1-r}(I),\chi^{1-r}(J)}).
    \end{align*}
\end{defin}

The cyclically rotated Richardson varieties are instances of Positroid varieties (see \cite[Section~6]{KLSjuggling}).
Similar to Grassmaniann Richardson varieties, cyclically rotated Richardson varieties can also be defined via vanishing of certain Pl\"ucker coordinates. 

\begin{prop}[\cite{KLSjuggling}]\label{prop:RIJadef}
For $I,J\subset [n]$, $r\in[n]$ with $|I|=|J|$ and $I\leq_r J$,
\[\mathcal{R}_{I,J,r} = \{V\in \gr(k,n): P_K(V) = 0 \text{ for }K\notin[I,J]_r\}.\]
The corresponding open cell $\mathcal{R}^\circ_{I,J,r} = \mathcal{R}_{I,J,r}\cap\{P_IP_J\neq 0\}$.
\end{prop}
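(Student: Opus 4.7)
The plan is to unwind the definition $\mathcal{R}_{I,J,r}=\chi^{r-1}(\mathcal{R}_{\chi^{1-r}(I),\chi^{1-r}(J)})$ and pull back the known Plücker description \eqref{eqn:grSchubPlucker} of the ordinary Grassmannian Richardson variety along the cyclic rotation. The two ingredients I would need to track carefully are (a) how $\chi$ acts on Plücker coordinates and (b) how $\chi$ intertwines the shifted Gale order with the standard Gale order.

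First I would establish the Plücker transformation rule. Since $\chi$ cyclically permutes the rows of any matrix representative, the $k\times k$ minor of $\chi(\widetilde V)$ on rows $K=\{i_1<\cdots<i_k\}$ is, up to reordering rows, the $k\times k$ minor of $\widetilde V$ on rows $\chi^{-1}(K)$. Hence $P_K(\chi(V))=\pm P_{\chi^{-1}(K)}(V)$, where the sign depends only on whether $1\in K$ (i.e.\ whether the cyclic shift wraps around). In particular the vanishing loci match: $P_K(\chi(V))=0$ iff $P_{\chi^{-1}(K)}(V)=0$, and likewise for nonvanishing.

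Next I would observe that shifting by $\chi^{1-r}$ converts $\leq_r$ to the ordinary Gale order: the shifted linear order $\leq_r$ is just $\leq_1=\leq$ after cyclically shifting $[n]$ by $r-1$, so $I\leq_r J$ iff $\chi^{1-r}(I)\leq \chi^{1-r}(J)$, and more generally $[I,J]_r=\chi^{r-1}\bigl([\chi^{1-r}(I),\chi^{1-r}(J)]\bigr)$. Combining this with (a): $V\in\mathcal{R}_{I,J,r}$ iff $\chi^{1-r}(V)\in\mathcal{R}_{\chi^{1-r}(I),\chi^{1-r}(J)}$, which by \eqref{eqn:grSchubPlucker} is equivalent to $P_{K'}(\chi^{1-r}(V))=0$ for every $K'\notin[\chi^{1-r}(I),\chi^{1-r}(J)]$. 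Substituting $K'=\chi^{1-r}(K)$ and applying the Plücker transformation rule, this becomes $P_K(V)=0$ for every $K$ with $\chi^{1-r}(K)\notin[\chi^{1-r}(I),\chi^{1-r}(J)]$, i.e.\ for every $K\notin[I,J]_r$, which is the claimed description.

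Finally, for the open cell, I would apply the same reduction to the characterization $\mathcal{R}^\circ_{\chi^{1-r}(I),\chi^{1-r}(J)}=\mathcal{R}_{\chi^{1-r}(I),\chi^{1-r}(J)}\cap\{P_{\chi^{1-r}(I)}P_{\chi^{1-r}(J)}\neq 0\}$: since $P_{\chi^{1-r}(I)}(\chi^{1-r}(V))=\pm P_I(V)$ and similarly for $J$, the open condition on the rotated picture is precisely $P_IP_J\neq 0$ on the original. There is no deep step here; the only potential obstacle is keeping the cyclic shift conventions (the direction of $\chi$ and the sign convention in Plücker coordinates) consistent throughout, so the plan is to fix conventions once at the start and then carry them through mechanically.
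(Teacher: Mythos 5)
Your proposal is correct and takes essentially the same approach as the paper: unwind the definition $\mathcal{R}_{I,J,r}=\chi^{r-1}(\mathcal{R}_{\chi^{1-r}(I),\chi^{1-r}(J)})$, transport the known Pl\"ucker characterization \eqref{eqn:grSchubPlucker} along $\chi$, and use that $\chi^{1-r}$ converts $\leq_r$ to the ordinary Gale order. The paper records these ingredients more tersely (writing $\chi_*(P_K)=P_{\chi(K)}$ without tracking the sign, which is harmless since Pl\"ucker coordinates are projective), but the argument is the same.
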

\begin{proof}
    Following from \eqref{eqn:grSchubPlucker} and the fact that $\chi_\ast (P_K)=P_{\chi(K)}$,
    \[\mathcal{R}_{I,J,r} = \{V\in \gr(k,n):  P_{\chi^{r-1}(K)}(V) = 0 \text{ for }K\notin[\chi^{1-r}(I),\chi^{1-r}(J)]\}.\]
    The statement then follows from the fact that $A\leq_r B\iff \chi^{1-r}(A)\leq \chi^{1-r}(B)$.
\end{proof}
    
For each $k\in [n-1]$, define $\pi_k:\mathrm{Fl}_n \rightarrow \gr(k,n)$ to be the projection onto the $k$-th flag.

\begin{theorem}\label{thm:altdefT}
    For 
    $u\leq_{\mathbf{a}} v$, we have
    \[\mathcal{T}_{u,v,\mathbf{a}} = \bigcap_{k = 1}^{n-1}\pi_k^{-1}(\mathcal{R}_{u[k],v[k],a_k})\quad\text{ and }\quad\mathcal{T}^\circ_{u,v,\mathbf{a}} = \bigcap_{k = 1}^{n-1}\pi_k^{-1}(\mathcal{R}^\circ_{u[k],v[k],a_k}).\]
\end{theorem}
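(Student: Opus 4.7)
The plan is to unpack both sides of the claimed equality into explicit rank conditions on each component $F_k$ of the flag and verify they agree term-by-term. Since the right-hand side is an intersection of pullbacks along each $\pi_k$, it suffices to fix $k \in [n-1]$ and show that the condition ``$F_k \in \mathcal{R}_{u[k],v[k],a_k}$'' is equivalent to the $2n$ rank conditions (for $j \in [n]$) in Definition~\ref{def:main} that involve $F_i$ with $i = k$.

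First, by definition of the cyclically rotated Richardson variety,
\[
F_k \in \mathcal{R}_{u[k],v[k],a_k} \iff \chi^{1-a_k}(F_k) \in \mathcal{R}_{I',J'} = X_{J'} \cap \Omega_{I'},
\]
where $I' = \chi^{1-a_k}(u[k])$ and $J' = \chi^{1-a_k}(v[k])$. Using the definitions of Grassmannian Schubert and opposite Schubert varieties, this unpacks as the standard rank conditions
\[
\dim\mathrm{Proj}_{[j]}\bigl(\chi^{1-a_k}(F_k)\bigr) \leq \#(I' \cap [j]), \quad \dim\mathrm{Proj}_{[n-j+1,n]}\bigl(\chi^{1-a_k}(F_k)\bigr) \leq \#(J' \cap [n-j+1,n])
\]
for all $j \in [n]$.

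The next step is to translate these back to conditions on $F_k$ itself. The key observation is that, by the row-shifting description of $\chi$ in \eqref{eqn:chi}, the $i$-th row of $\chi^{m}(V)$ is the $(i-m)$-th row of $V$ (indices mod $n$), so $\dim\mathrm{Proj}_S(\chi^{m}(V)) = \dim\mathrm{Proj}_{S-m}(V)$ for any $S \subseteq [n]$. Applying this with $m = 1 - a_k$, the set $[j]$ transforms to $[j] + (a_k - 1) = [a_k,\, a_k+j-1]_c$, and the set $[n-j+1,n]$ transforms to $[a_k-j,\, a_k-1]_c$. Simultaneously, the count $\#(I' \cap [j])$ becomes $\#(u[k] \cap [a_k, a_k+j-1]_c)$ since $I' = u[k] + (1-a_k)$, and likewise $\#(J' \cap [n-j+1,n]) = \#(v[k] \cap [a_k-j, a_k-1]_c)$. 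As $j$ runs over $[n]$, the value $a_k+j-1 \pmod n$ covers every element of $[n]$, so these become precisely the first family of conditions in \eqref{eqn:tiltRich} with $i = k$; similarly $a_k-j \pmod n$ covers $[n]$, matching the second family. Intersecting over all $k \in [n-1]$ yields $\mathcal{T}_{u,v,\mathbf{a}}$ as claimed.

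For the open version, $\mathcal{R}^\circ_{I',J'} = X^\circ_{J'} \cap \Omega^\circ_{I'}$ is cut out by the same conditions with ``$\leq$'' replaced by ``$=$'', and cyclic rotation commutes with this replacement, so the identical argument gives $\mathcal{T}^\circ_{u,v,\mathbf{a}} = \bigcap_{k=1}^{n-1} \pi_k^{-1}(\mathcal{R}^\circ_{u[k],v[k],a_k})$. The main obstacle is purely bookkeeping: carefully tracking how the cyclic shift $\chi^{a_k-1}$ on $\gr(k,n)$ converts the interval-based Grassmannian rank conditions into cyclic-interval conditions, and confirming that as $j$ varies the resulting family of cyclic intervals exhausts exactly the collections appearing in Definition~\ref{def:main}.
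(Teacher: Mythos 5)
Your proposal is correct and follows essentially the same route as the paper, whose proof is exactly the indicated unpacking: rewrite $F_k\in\mathcal{R}_{u[k],v[k],a_k}$ via $\chi^{1-a_k}$ as the standard Grassmannian Schubert/opposite Schubert rank conditions and shift them back to the cyclic-interval conditions of Definition~\ref{def:main}. Your explicit bookkeeping (rows of $\chi^m(V)$, the intervals $[a_k,a_k+j-1]_c$ and $[a_k-j,a_k-1]_c$, and the equality version for the open cells) is exactly the verification the paper leaves to the reader.
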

\begin{proof}
This is done by unpacking the rank conditions and comparing with \eqref{eqn:tiltRichOpen} and \eqref{eqn:tiltRich}.
\end{proof}

Since each $\pi_k^{-1}(\mathcal{R}_{u[k],v[k],a_k})$ is a closed subvariety of $\mathrm{Fl}_n$ and $\mathcal{R}^\circ_{u[k],v[k],a_k}\subseteq \mathcal{R}_{u[k],v[k],a_k}$ is open, we have the following corollary.

\begin{cor}
    $\mathcal{T}_{u,v,\mathbf{a}}$ is a closed subvariety of $\mathrm{Fl}_n$ and $\mathcal{T}_{u,v,\mathbf{a}}^\circ \subseteq \mathcal{T}_{u,v,\mathbf{a}}$ is open. 
\end{cor}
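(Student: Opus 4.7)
The plan is to derive this corollary formally from \Cref{thm:altdefT}, combined with \Cref{prop:RIJadef} and standard topological facts about the Zariski topology on $\mathrm{Fl}_n$ and $\gr(k,n)$. The statement essentially amounts to showing that each of the $n-1$ building blocks $\pi_k^{-1}(\mathcal{R}_{u[k],v[k],a_k})$ is closed, and that the analogous open pieces $\pi_k^{-1}(\mathcal{R}^\circ_{u[k],v[k],a_k})$ are open subsets of $\mathrm{Fl}_n$.

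First, I would invoke \Cref{prop:RIJadef} to observe that $\mathcal{R}_{u[k],v[k],a_k}$ is cut out in $\gr(k,n)$ by the vanishing of the Pl\"ucker coordinates $P_K$ for $K \notin [u[k],v[k]]_{a_k}$, which is a finite collection of homogeneous polynomial equations. Hence $\mathcal{R}_{u[k],v[k],a_k}$ is Zariski closed in $\gr(k,n)$. Since the projection $\pi_k \colon \mathrm{Fl}_n \to \gr(k,n)$ is a morphism of varieties (in particular continuous), the preimage $\pi_k^{-1}(\mathcal{R}_{u[k],v[k],a_k})$ is closed in $\mathrm{Fl}_n$. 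Intersecting finitely many closed sets preserves closedness, so the formula
\[
\mathcal{T}_{u,v,\mathbf{a}} \;=\; \bigcap_{k=1}^{n-1} \pi_k^{-1}(\mathcal{R}_{u[k],v[k],a_k})
\]
from \Cref{thm:altdefT} immediately yields that $\mathcal{T}_{u,v,\mathbf{a}}$ is a closed subvariety of $\mathrm{Fl}_n$.

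For the openness statement, I would use the identification $\mathcal{R}^\circ_{u[k],v[k],a_k} = \mathcal{R}_{u[k],v[k],a_k} \cap \{P_{u[k]}P_{v[k]} \neq 0\}$ from \Cref{prop:RIJadef}. The locus $\{P_{u[k]}P_{v[k]} \neq 0\}$ is the complement of a vanishing locus of Pl\"ucker coordinates and is therefore Zariski open in $\gr(k,n)$. Pulling back along $\pi_k$ gives an open subset of $\mathrm{Fl}_n$, so
\[
\pi_k^{-1}(\mathcal{R}^\circ_{u[k],v[k],a_k}) \;=\; \pi_k^{-1}(\mathcal{R}_{u[k],v[k],a_k}) \cap \pi_k^{-1}\bigl(\{P_{u[k]}P_{v[k]} \neq 0\}\bigr)
\]
is the intersection of a closed and an open subset of $\mathrm{Fl}_n$, i.e.\ open inside $\pi_k^{-1}(\mathcal{R}_{u[k],v[k],a_k})$. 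Taking the intersection over $k=1,\ldots,n-1$ then presents $\mathcal{T}^\circ_{u,v,\mathbf{a}}$ as the intersection of $\mathcal{T}_{u,v,\mathbf{a}}$ with the open subset $\bigcap_{k=1}^{n-1} \pi_k^{-1}\bigl(\{P_{u[k]}P_{v[k]} \neq 0\}\bigr)$ of $\mathrm{Fl}_n$, so $\mathcal{T}^\circ_{u,v,\mathbf{a}} \subseteq \mathcal{T}_{u,v,\mathbf{a}}$ is open.

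There is no real obstacle here — the whole argument is a bookkeeping exercise once \Cref{thm:altdefT} and \Cref{prop:RIJadef} are in place. The only point worth being careful about is distinguishing ``open as a subset of $\mathcal{T}_{u,v,\mathbf{a}}$'' from ``open as a subset of $\mathrm{Fl}_n$''; the Pl\"ucker description gives the latter (stronger) statement immediately, which then restricts to the former.
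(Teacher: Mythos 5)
Your argument is correct and is essentially the paper's own: the paper deduces this corollary in one line from \Cref{thm:altdefT}, noting that each $\pi_k^{-1}(\mathcal{R}_{u[k],v[k],a_k})$ is closed in $\mathrm{Fl}_n$ and each $\mathcal{R}^\circ_{u[k],v[k],a_k}\subseteq \mathcal{R}_{u[k],v[k],a_k}$ is open, exactly as you spell out via \Cref{prop:RIJadef}. One small caution on your final remark: the Pl\"ucker description does not make $\mathcal{T}^\circ_{u,v,\mathbf{a}}$ itself open in $\mathrm{Fl}_n$ (it is only locally closed); what is open in $\mathrm{Fl}_n$ is the nonvanishing locus $\bigcap_{k}\pi_k^{-1}\bigl(\{P_{u[k]}P_{v[k]}\neq 0\}\bigr)$, and intersecting it with $\mathcal{T}_{u,v,\mathbf{a}}$ yields the relative openness claimed in the corollary, which is precisely what your main computation establishes.
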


Let $\mathbf{a} = (a_1\dots a_{n-1})$ and $\mathbf{a}' = (a_1' \ldots a_{n-1}')$ be any two sequences such that $u\leq_{\mathbf{a}} v$ and $u\leq_{\mathbf{a}'}v$. Our next goal is to assert that the (open) tilted Richardson variety is independent of the choice of $\mathbf{a}$. We first need the following lemma. We note that this can be seen as an alternate interpretation of \Cref{lemma:both-r-comparable}.


\begin{lemma}\label{lemma:rotateRich}
   For $I,J \subset [n]$ with $|I| = |J|=k$, suppose $I\leq_r J$ and $I\leq_{r'}J$ for some $r\neq r'\in [n]$. Then $\mathcal{R}_{I,J,r} = \mathcal{R}_{I,J,r'}$ and $\mathcal{R}^\circ_{I,J,r} = \mathcal{R}^\circ_{I,J,r'}$. Furthermore, take any $V\in \mathcal{R}_{I,J,a}$ and its $n\times k$ matrix representative $M_V$ whose column span equals $V$. Then
    \[\mathrm{span}\{\vec{v}_i:i\in [r,r')_c\} \cap \mathrm{span}\{\vec{v}_i:i\in [r',r)_c\} = \{0\},\]
    where  $\vec{v_i}$ is the $i$-th row of $M_V$.
\end{lemma}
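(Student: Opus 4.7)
The plan is to treat the two assertions separately, reducing both to Proposition~\ref{prop:RIJadef} and Lemma~\ref{lemma:both-r-comparable}. For the first assertion, Lemma~\ref{lemma:both-r-comparable} gives $[I,J]_r = [I,J]_{r'}$ as sets, while Proposition~\ref{prop:RIJadef} characterizes both $\mathcal{R}_{I,J,r}$ and $\mathcal{R}_{I,J,r'}$ as the vanishing locus of $\{P_K : K \notin [I,J]_r\}$; these therefore coincide as closed subvarieties. The open cells then also agree, since each is cut out by the further condition $P_I P_J \neq 0$.

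For the second assertion, set $m := \#(I \cap [r,r')_c)$ and let $V \in \mathcal{R}_{I,J,r}$. By Lemma~\ref{lemma:both-r-comparable}, every $K \in [I,J]_r$ satisfies $\#(K \cap [r,r')_c) = m$ and $\#(K \cap [r',r)_c) = k-m$, so any non-vanishing Pl\"ucker coordinate of $V$ has this property. The key tool will be the following general identity: for any $V \in \gr(k,n)$ with matrix representative $M_V$ having rows $\vec{v}_1, \ldots, \vec{v}_n$, and any $S \subseteq [n]$,
\[
\dim \mathrm{span}\{\vec{v}_i : i \in S\} \;=\; \max\bigl\{\,|K \cap S| : K \in \binom{[n]}{k},\ P_K(V) \neq 0\,\bigr\}.
\]
The $\geq$ direction follows because $P_K(V) \neq 0$ forces $\{\vec{v}_i : i \in K\}$ to be linearly independent, hence so is the subfamily indexed by $K \cap S$. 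The $\leq$ direction follows by extending any maximal independent family of rows indexed in $S$ to a basis of the full row space of $M_V$, which produces a $K$ with $P_K(V) \neq 0$ realizing the maximum.

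Writing $A = \mathrm{span}\{\vec{v}_i : i \in [r,r')_c\}$ and $B = \mathrm{span}\{\vec{v}_i : i \in [r',r)_c\}$, applying this identity with $S = [r,r')_c$ and $S = [r',r)_c$ gives $\dim A \leq m$ and $\dim B \leq k-m$ via the cardinality constraint above. On the other hand, since $[r,r')_c \sqcup [r',r)_c = [n]$, $A + B$ contains every row of $M_V$ and so equals the full row space, which has dimension $k = \dim V$. Hence $\dim A + \dim B \geq k = m + (k-m)$, forcing equalities throughout and therefore $A \cap B = \{0\}$. The main conceptual input is the Pl\"ucker--rank identity above; once that is in place, the rest is bookkeeping with Lemma~\ref{lemma:both-r-comparable}.
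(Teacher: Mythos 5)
Your proof is correct, and the first part (identity of the closed and open varieties) is identical to the paper's argument. For the second part, the overall structure also matches: bound $\dim A$ and $\dim B$ by $m$ and $k-m$ respectively using the cardinality count from Lemma~\ref{lemma:both-r-comparable}, then observe $A+B$ is the full $k$-dimensional row space to force equalities and conclude $A\cap B = \{0\}$. The only difference is how you obtain the dimension bounds: the paper reads them directly off the rank-condition definition of the cyclically rotated Richardson variety (the conditions $\dim(\mathrm{Proj}_{[r,j]_c}(V)) \leq \#(I\cap[r,j]_c)$ and $\dim(\mathrm{Proj}_{[j,r-1]_c}(V)) \leq \#(J\cap[j,r-1]_c)$, specialized to $j = r'-1$ and $j = r'$), whereas you detour through Proposition~\ref{prop:RIJadef} and a Pl\"ucker--rank identity $\dim\mathrm{span}\{\vec{v}_i : i\in S\} = \max\{|K\cap S| : P_K(V)\neq 0\}$, which you prove on the spot. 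Both routes are valid and logically equivalent since the Pl\"ucker characterization and the rank conditions describe the same variety; the paper's version is slightly shorter, but yours has the modest advantage of isolating a reusable general fact about matrices and never explicitly invoking the projection/rank formulation.
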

\begin{proof}
    Since $\mathcal{R}^\circ_{I,J,r}=\mathcal{R}_{I,J,r}\cap\{P_IP_J\neq 0\}$, we only need to prove the closed case. From Proposition~\ref{prop:RIJadef}, we have
    \begin{align*}
        \mathcal{R}_{I,J,r} &= \{V\in \gr(k,n):  P_K(V) = 0 \text{ for }K\notin[I,J]_r\},\\
        \mathcal{R}_{I,J,r'} &= \{V\in \gr(k,n):  P_K(V) = 0\text{ for }K\notin[I,J]_{r'}\}.
    \end{align*}
    Since Lemma~\ref{lemma:both-r-comparable} implies $[I,J]_r=[I,J]_{r'}$, we have $\mathcal{R}_{I,J,r}=\mathcal{R}_{I,J,r'}$.

    For the second part, Lemma~\ref{lemma:both-r-comparable}  implies that there exists some integer $0\leq d\leq k$ satisfying
    \begin{align*}
        \#(I\cap [r,r')_c)&=\#(J\cap [r,r')_c)=d,\\
        \#(I\cap [r',r)_c)&=\#(J\cap [r',r)_c)=k-d.
    \end{align*}
    By definition, for any $V\in \mathcal{R}^\circ_{I,J,r}$, we have
    \begin{align*}
        \#(I\cap [r,r')_c)=d &\implies \dim(\mathrm{span}\{\vec{v}_i:i\in [r,r')_c\})\leq d,\\
        \#(J\cap [r',r)_c)=k-d &\implies \dim(\mathrm{span}\{\vec{v}_i:i\in [r',r)_c\})\leq k-d.
    \end{align*}
    Notice that
    \[\dim(\mathrm{span}\{\vec{v}_i:i\in [r,r')_c\})+\dim(\mathrm{span}\{\vec{v}_i:i\in [r',r)_c\})\geq \dim(V)=k,\]
    and equality holds if and only if the two subspaces $\mathrm{span}\{\vec{v}_i:i\in [r,r')_c\}$ and $\mathrm{span}\{\vec{v}_i:i\in [r',r)_c\}$ are linearly independent. This concludes the proof.
\end{proof}

\begin{ex}
    Continuing Example~\ref{ex:4321-3142}, Lemma~\ref{lemma:rotateRich} implies that both the green shaded area and the purple hatched area in Figure~\ref{fig:rowspan-ind} have rank $1$. Moreover, the row spans of the two areas are independent.
    \begin{figure}[ht!]
    \centering
    \begin{tikzpicture}[scale = 0.7]
    \fill [green, opacity  = 0.25] (0,1) rectangle (2,3);
    \draw[pattern = north west lines, pattern color = purple, opacity = 0.5] (0,0) rectangle (2,1);
    \draw[pattern = north west lines, pattern color = purple, opacity = 0.5] (0,3) rectangle (2,4);
    \draw (0,0)--(4,0)--(4,4)--(0,4)--(0,0);
    \draw (1,0) -- (1,4);
    \draw (2,0) -- (2,4);
    \draw (3,0) -- (3,4);
    \draw (0,1) -- (4,1);
    \draw (0,2) -- (4,2);
    \draw (0,3) -- (4,3);
    \node at (-0.5,0.5) {$4$};
    \node at (-0.5,1.5) {$3$};
    \node at (-0.5,2.5) {$2$};
    \node at (-0.5,3.5) {$1$};
    \node at (0.5,-0.5) {$1$};
    \node at (1.5,-0.5) {$2$};
    \node at (2.5,-0.5) {$3$};
    \node at (3.5,-0.5) {$4$};
    \node[orange] at (0.5,0.5) {$\star$};
    \node[orange] at (1.5,1.5) {$\star$};
    \node[orange] at (2.5,2.5) {$\star$};
    \node[orange] at (3.5,3.5) {$\star$};
    \node[blue] at (0.5,1.5) {$\bullet$};
    \node[blue] at (1.5,3.5) {$\bullet$};
    \node[blue] at (2.5,0.5) {$\bullet$};
    \node[blue] at (3.5,2.5) {$\bullet$};
    \draw[line width=0.55mm, red] (1,1) -- (2,1);
    \draw[line width=0.55mm, red] (1,3) -- (2,3);
    \end{tikzpicture}
    \caption{The two horizontal red line segments represents two choices of $a_2$. Row spans of the green area and purple area are independent.}
    \label{fig:rowspan-ind}
\end{figure}
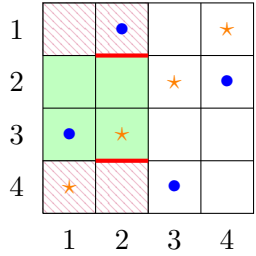
\end{ex}
\begin{cor}\label{cor:independent-a}
$\mathcal{T}_{u,v,\mathbf{a}}^\circ = \mathcal{T}_{u,v,\mathbf{a}'}^\circ$ and $\mathcal{T}_{u,v,\mathbf{a}} = \mathcal{T}_{u,v,\mathbf{a}'}$ for $u\leq_{\mathbf{a}} v$ and $u\leq_{\mathbf{a}'}v$.
\end{cor}
\begin{proof}
    This follows from Proposition~\ref{thm:altdefT} and Lemma~\ref{lemma:rotateRich}.
\end{proof}

As a consequence of \Cref{cor:independent-a}, we will denote the (open) tilted Richardson variety as $\mathcal{T}_{u,v}$ ($\mathcal{T}_{u,v}^\circ$). In particular, if $u\leq v$ in strong Bruhat order, $\mathcal{T}_{u,v}^\circ = \mathcal{R}_{u,v}^\circ$ and $\mathcal{T}_{u,v}=\mathcal{R}_{u,v}$.

\subsection{The third definition: vanishing of Pl\"ucker coordinates}

\begin{lemma}\label{lemma:plucker-set-to-perm}
    For any flag $F_\bullet\in \mathrm{Fl}_n$ and subset $I\in\binom{[n]}{k}$, if $P_I(F_\bullet)\neq 0$, then there exists a permutation $w\in S_n$ such that $w[k]=I$, and $P_w(F_\bullet)\neq0$.
\end{lemma}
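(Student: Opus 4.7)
The plan is to construct a saturated chain of subsets
\[
\emptyset = S_0 \subsetneq S_1 \subsetneq \cdots \subsetneq S_{n-1} \subsetneq S_n = [n], \qquad |S_j| = j,
\]
with $S_k = I$ and $P_{S_j}(F_\bullet) \neq 0$ for every $1 \leq j \leq n-1$. Any such chain determines a unique permutation $w \in S_n$ via $\{w_j\} = S_j \setminus S_{j-1}$. Then $w[j] = S_j$ for all $j$, so $w[k] = I$ and $P_w(F_\bullet) = \prod_{j=1}^{n-1} P_{S_j}(F_\bullet) \neq 0$, which is exactly what is required.

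Starting from $S_k := I$, I will extend downward to $S_{k-1}, \ldots, S_1$ and upward to $S_{k+1}, \ldots, S_{n-1}$ one step at a time, using the Case~\RNum{1} incidence Pl\"ucker relation \eqref{eqn:incidenceplucker1}:
\[
P_A(F_\bullet)\, P_B(F_\bullet) \;=\; \sum_{i \in A} P_{A-i}(F_\bullet)\, P_{B+i}(F_\bullet), \qquad |A| = |B|+1.
\]
For the downward step from $S_j$ (with $2 \leq j \leq n-1$ and $P_{S_j}(F_\bullet) \neq 0$), I pick any $B \in \binom{[n]}{j-1}$ with $P_B(F_\bullet) \neq 0$; such $B$ exists since $F_{j-1}$ is a nonzero subspace. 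Applying the relation with $A = S_j$ makes the LHS nonzero, so some $i \in S_j$ satisfies $P_{S_j - i}(F_\bullet) \neq 0$; set $S_{j-1} := S_j \setminus \{i\}$. For the upward step from $S_j$ (with $1 \leq j \leq n-2$ and $P_{S_j}(F_\bullet) \neq 0$), I pick $A \in \binom{[n]}{j+1}$ with $P_A(F_\bullet) \neq 0$ and apply the relation with $B = S_j$; some $i \in A$ then satisfies $P_{S_j + i}(F_\bullet) \neq 0$, and since a Pl\"ucker coordinate with repeated index vanishes we must have $i \notin S_j$, so $S_{j+1} := S_j \cup \{i\}$ works.

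The boundary cases do not require the incidence relation at all: the downward step $S_1 \to S_0$ is automatic because $P_\emptyset = 1$; the upward step $S_0 \to S_1$ (used only when $k = 0$) just requires some $i$ with $P_{\{i\}}(F_\bullet) \neq 0$, which exists because $F_1 \neq 0$; and, if $k = n$, the first downward step $S_n \to S_{n-1}$ only needs some nonzero Pl\"ucker coordinate of $F_{n-1}$. Iterating these extensions produces the required chain, completing the proof. There is no substantial obstacle here: the incidence Pl\"ucker relations are tailor-made to propagate nonvanishing between adjacent flag levels, and a single induction in each direction suffices.
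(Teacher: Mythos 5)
Your proof is correct and reaches the same intermediate goal as the paper's --- a saturated chain of subsets $\emptyset = S_0 \subsetneq S_1 \subsetneq \cdots \subsetneq S_n = [n]$ passing through $I$ with $P_{S_j}(F_\bullet) \neq 0$ for $1\leq j \leq n-1$ --- but by a different mechanism. The paper argues via a matrix representative $M_F$: since $P_I(F_\bullet)\neq 0$, the $k\times k$ submatrix in rows $I$ and the first $k$ columns has full rank, hence so does some $(k-1)\times(k-1)$ submatrix of it sitting in the first $k-1$ columns, yielding $I' \subset I$ with $P_{I'}(F_\bullet)\neq 0$; this is iterated in both directions. You instead propagate nonvanishing between adjacent flag levels using the Case~\RNum{1} incidence Pl\"ucker relation, pairing $S_j$ with an auxiliary subset at the neighboring level chosen to have a nonzero Pl\"ucker coordinate. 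The paper's route is slightly more elementary and never needs the auxiliary set; yours is more intrinsic, relying only on the incidence Pl\"ucker relations and the product structure of the multi-Pl\"ucker embedding rather than on a choice of matrix representative, which is an aesthetic advantage. One small imprecision worth tightening: the existence of $B$ (resp.\ $A$) with $P_B(F_\bullet)\neq 0$ follows not merely from $F_{j-1}$ (resp.\ $F_{j+1}$) being a nonzero subspace, but from its having dimension exactly $j-1$ (resp.\ $j+1$), so that a representing $n\times(j-1)$ matrix has full column rank and therefore a nonvanishing maximal minor; this is of course automatic for a complete flag. The range bookkeeping ($2\leq j\leq n-1$ downward, $1\leq j\leq n-2$ upward, with the stated boundary cases) is handled correctly.
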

\begin{proof}
    Let $M_F\in G$ be an matrix representative of $F_\bullet$ such that $F_i$ is the span of the first $i$ column vectors of $M_F$. The $k\times k$ submatrix $M'$ of $M_F$ in the rows indexed by $I$ in the first $k$ columns has full rank. Therefore, there exists a $(k-1)\times (k-1)$ submatrix of $M'$ in the first $(k-1)$ columns that also has full rank. Denote the row index of this submatrix as $I'$. Then $|I'|=k-1$ and $P_{I'}(F_\bullet)\neq 0$.

    Apply this process inductively to find a chain of subsets $\emptyset=I_0\subsetneq I_1\subsetneq\cdots\subsetneq I_{k-1}=I'\subsetneq I_k=I$ such that $P_{I_j}(F_\bullet)\neq 0$ for any $0\leq j\leq k$. Using similar idea, we can also find a chain $I=I_k\subsetneq I_{k+1}\subsetneq\cdots\subsetneq I_n=[n]$ such that $P_{I_j}(F_\bullet)\neq 0$ for $k\leq j\leq n$. Let $w\in S_n$ be the permutation such that $w_k=I_k\setminus I_{k-1}$. Then $P_w(F_\bullet)\neq 0$ and $w[k]=I$.
\end{proof}

\begin{theorem}\label{thm:altdefTplucker}
For any permutations $u,v\in S_n$
, we have
\[\mathcal{T}_{u,v} = \{F_\bullet\in \mathrm{Fl}_n: P_w(F_\bullet) = 0 \text{ for }w\notin[u,v]\},\]
    and the corresponding open cell $\mathcal{T}^\circ_{u,v} = \mathcal{T}_{u,v}\cap\{P_uP_v\neq 0\}$.
\end{theorem}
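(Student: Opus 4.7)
The plan is to combine Theorem~\ref{thm:altdefT} with Proposition~\ref{prop:RIJadef} to reduce the assertion to a Pl\"ucker-coordinate statement on each Grassmannian factor, and then glue across $k$ using the combinatorial characterization of $[u,v]$ from Theorem~\ref{thm:tilted-criterion}. Fix any sequence $\mathbf{a} = (a_1, \ldots, a_{n-1})$ with $u \leq_{\mathbf{a}} v$; existence is guaranteed by Lemma~\ref{lemma:r-comparable}. By Corollary~\ref{cor:independent-a}, $\mathcal{T}_{u,v} = \mathcal{T}_{u,v,\mathbf{a}}$, and combining Theorem~\ref{thm:altdefT} with Proposition~\ref{prop:RIJadef} gives
\[
\mathcal{T}_{u,v} \;=\; \left\{F_\bullet \in \mathrm{Fl}_n : P_K(F_k) = 0 \text{ for every } k\in[n-1] \text{ and every } K\in\binom{[n]}{k}\setminus [u[k],v[k]]_{a_k}\right\}.
\]
The key combinatorial input from Theorem~\ref{thm:tilted-criterion} is that, for this fixed $\mathbf{a}$, $w\in[u,v]$ if and only if $w[k]\in[u[k],v[k]]_{a_k}$ for all $k\in[n-1]$.

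For the forward inclusion, let $F_\bullet \in \mathcal{T}_{u,v}$ and $w \notin [u,v]$. Then some index $k$ satisfies $w[k]\notin[u[k],v[k]]_{a_k}$, so by the displayed equation $P_{w[k]}(F_\bullet) = P_{w[k]}(F_k) = 0$. Since $P_w(F_\bullet) = \prod_{k=1}^{n-1} P_{w[k]}(F_\bullet)$, we conclude $P_w(F_\bullet) = 0$.

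For the reverse inclusion, suppose $P_w(F_\bullet) = 0$ for every $w\notin[u,v]$, and fix $k\in[n-1]$ together with $K\in\binom{[n]}{k}\setminus[u[k],v[k]]_{a_k}$; we must show $P_K(F_k) = 0$. If instead $P_K(F_k) \neq 0$, then Lemma~\ref{lemma:plucker-set-to-perm} produces $w\in S_n$ with $w[k] = K$ and $P_w(F_\bullet) \neq 0$. The hypothesis then forces $w\in[u,v]$, whereupon Theorem~\ref{thm:tilted-criterion} gives $K = w[k]\in[u[k],v[k]]_{a_k}$, a contradiction. This reverse direction is the only step I expect any real friction in; the crucial observation is that Lemma~\ref{lemma:plucker-set-to-perm} is precisely the bridge required to promote a single nonvanishing Grassmannian Pl\"ucker up to a nonvanishing flag Pl\"ucker indexed by a permutation.

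Finally, for the open statement, Theorem~\ref{thm:altdefT} and Proposition~\ref{prop:RIJadef} yield
\[\mathcal{T}^\circ_{u,v} \;=\; \mathcal{T}_{u,v} \cap \bigcap_{k=1}^{n-1} \{P_{u[k]}\neq 0\} \cap \{P_{v[k]}\neq 0\}.\]
Since $P_u = \prod_{k=1}^{n-1} P_{u[k]}$ and $P_v = \prod_{k=1}^{n-1} P_{v[k]}$, this immediately collapses to $\mathcal{T}^\circ_{u,v} = \mathcal{T}_{u,v}\cap\{P_u P_v \neq 0\}$, completing the proof.
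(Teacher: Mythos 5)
Your proposal is correct and follows essentially the same route as the paper's proof: reduce to the Grassmannian Pl\"ucker description via Theorem~\ref{thm:altdefT} and Proposition~\ref{prop:RIJadef}, use Theorem~\ref{thm:tilted-criterion} to translate $w\notin[u,v]$ into failure of $w[k]\in[u[k],v[k]]_{a_k}$ at some $k$, and use Lemma~\ref{lemma:plucker-set-to-perm} for the reverse inclusion; the open-cell argument is likewise identical. No gaps to report.
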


\begin{proof}
    Fix a sequence $\mathbf{a}$ such that $u\leq_{\mathbf{a}}v$. For any $F_\bullet\in \mathcal{T}_{u,v}$ and $w\notin[u,v]$, we first show $P_w(F_\bullet)=0$. By Theorem~\ref{thm:tilted-criterion}, there exists $k\in[n-1]$ such that $w[k]\notin[u[k],v[k]]_{a_k}$. Since $F_k\in\mathcal{R}_{u[k],v[k],a_k}$, by Proposition~\ref{prop:RIJadef}, $P_{w[k]}(F_k)=0$ and $P_w(F_\bullet)=0$. We have proven $\subseteq$.

    We now prove $\supseteq$. For any $F_\bullet\notin\mathcal{T}_{u,v,\mathbf{a}}$, there exists $k\in [n-1]$ such that $F_k\notin\mathcal{R}_{u[k],v[k],a_k}$. By Proposition~\ref{prop:RIJadef}, there exists subset $I\in{[n]\choose k}$ such that $I\notin [u[k],v[k]]_{a_k}$ and $P_I(F_k)\neq 0$. By Lemma~\ref{lemma:plucker-set-to-perm}, there exists a permutation $w\in S_n$ such that $P_w(F_\bullet)\neq0$ and $w[k]=I$. However, $I\notin [u[k],v[k]]_{a_k}$ implies $w\notin[u,v]$.

    For the open cell, notice that
    \[\mathcal{R}^\circ_{u[k],v[k],a_k}= \mathcal{R}_{u[k],v[k],a_k}\cap\{P_{u[k]}P_{v[k]}\neq 0\}.\]
    Therefore, by Theorem~\ref{thm:altdefT}, $\mathcal{T}^\circ_{u,v}=\mathcal{T}_{u,v}\cap\{P_uP_v=\prod_{k=1}^{n-1}P_{u[k]}P_{v[k]}\neq 0\}$ as desired.
\end{proof}
\section{Geometric properties of tilted Richardson varieties}\label{sec:tilted-Richardson-geometry}
In this section, we explore some further geometric properties of the tilted Richardson varieties, analogous to those of the classical Richardson varieties. We do note, however, that most of our analysis are significantly different and more technical than the classical case, due to the lack of an analogue notion of Schubert varieties and the constant usage of the sequence $\mathbf{a}=(a_1,\ldots,a_{n-1})$ that keeps varying.

\subsection{Stratification of $\mathcal{T}_{u,v}$ by open tilted Richardson varieties}
We start with a useful lemma that controls the choices of $a_k$ to some extent. 

\begin{lemma}\label{lemma:a_k=a_k+1}
    For any permutation $u,v$ and any $k\in [n-2]$, there exists $a_k$ such that $u[k]\leq_{a_k}v[k]$ and $u[k+1]\leq_{a_k} v[k+1]$.
\end{lemma}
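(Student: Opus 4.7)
My plan is to use the lattice-path interpretation from Lemma~\ref{lemma:r-comparable}: $A \leq_r B$ holds if and only if the path $P(A,B)$ attains its minimum at $x$-coordinate $r-1$. Hence it suffices to produce a single $x$-coordinate $x^\ast$ at which both $P_k := P(u[k], v[k])$ and $P_{k+1} := P(u[k+1], v[k+1])$ simultaneously reach their respective minima; then setting $a_k := x^\ast + 1$ will satisfy both required inequalities.

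First I would compare the two paths step by step. Let $a := u_{k+1}$ and $b := v_{k+1}$. For every index $i \neq a, b$, the membership of $i$ in $u[k]$ (respectively in $v[k]$) agrees with its membership in $u[k+1]$ (respectively in $v[k+1]$), so the $i$-th step is identical in $P_k$ and $P_{k+1}$. A short case analysis on whether $a \in v[k]$ and whether $b \in u[k]$ shows that in $P_{k+1}$ the step at position $a$ is always exactly one unit higher (horizontal becomes up, or down becomes horizontal), while the step at position $b$ is always exactly one unit lower. Consequently: if $a = b$ then $P_k = P_{k+1}$ and any minimum of $P_k$ works; if $a < b$ then $P_{k+1}$ agrees with $P_k$ outside $[a, b-1]$ and exceeds $P_k$ by one on $[a, b-1]$; and if $a > b$ then $P_{k+1}$ agrees with $P_k$ outside $[b, a-1]$ and is smaller by one on $[b, a-1]$.

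In the case $a < b$ I would locate a common minimum as follows. If some minimum of $P_k$ lies outside $[a, b-1]$, then at such an $x^\ast$ we have $P_{k+1}(x^\ast) = P_k(x^\ast) = \min P_k$, and since $P_{k+1} \geq P_k$ pointwise, this is also a minimum of $P_{k+1}$. Otherwise every minimum of $P_k$ is inside $[a, b-1]$, so at any such $x^\ast$ we get $P_{k+1}(x^\ast) = \min P_k + 1$; for any $y \notin [a, b-1]$ the integer-valued path satisfies $P_k(y) \geq \min P_k + 1$, so $P_{k+1}(y) = P_k(y) \geq \min P_k + 1$, making $x^\ast$ a minimum of $P_{k+1}$ as well. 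The case $a > b$ is entirely symmetric: either some minimum of $P_k$ sits in $[b, a-1]$ and becomes a minimum of $P_{k+1}$ with value one less, or every minimum of $P_k$ lies outside $[b, a-1]$ and they remain minima of $P_{k+1}$ with unchanged value.

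The argument is not deep; the one subtle point is the integer-height step that turns the hypothesis ``every minimum of $P_k$ is inside the shifted interval'' into the pointwise bound $P_k(y) \geq \min P_k + 1$ outside it. Apart from that, the proof is a clean case analysis driven entirely by the local modification of the lattice path upon passing from level $k$ to level $k+1$.
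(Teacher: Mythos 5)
Your proof is correct and follows essentially the same route as the paper: both reduce the question to comparing the lattice paths $P(u[k],v[k])$ and $P(u[k+1],v[k+1])$ via Lemma~\ref{lemma:r-comparable}, and both exploit the key fact that passing from level $k$ to $k+1$ shifts the path by $+1$ or $-1$ on a single contiguous interval of $x$-coordinates and leaves it unchanged elsewhere. The paper phrases the conclusion as a short argument by contradiction (if the two sets of minimizing $x$-coordinates were disjoint, some points would need to move up and others down, contradicting the monotone shift), whereas you give a direct case analysis locating a common minimizer explicitly; this is a presentational difference, not a different method.
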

\begin{proof}
    Consider the lattice path $P_k:= P(u[k],v[k])$.
    Since $u[k+1] = u[k]\cup\{u(k+1)\}$ and $v[k+1] = v[k]\cup \{v(k+1)\}$, we obtain $P_{k+1}:= P(u[k+1],v[k+1])$ from $P_k$ by,
    \begin{itemize}
        \item[-] at the $u(k+1)$-th step, changing a $\searrow$ to a $\rightarrow$ or changing a $\rightarrow$ to a $\nearrow$; and
        \item[-] at the $v(k+1)$-th step, changing a $\nearrow$ to a $\rightarrow$ or changing a  $\rightarrow$ to a $\searrow$.
    \end{itemize}
    In other words, we obtain $P_{k+1}$ by moving the lattice points 
    \begin{align}\label{eqn:Jul10aaa}
        (a,b)\in P_k \longrightarrow 
        \begin{cases}
            (a,b+1) &\text{ if }u(k+1)\leq a< v(k+1)\\
            (a,b-1) &\text{ if }v(k+1)\leq a< u(k+1)\\
            (a,b) & \text{ otherwise}
        \end{cases}.
    \end{align}
    Consider now $A_k:= \{a_k-1: u[k]\leq_{a_k} v[k]\}$. By Lemma~\ref{lemma:r-comparable}, $u[k]\leq_{a_k}v[k]$ if and only if $P_k$ hits the its lowest point with $x$-coordinate in $A_k$. Since the $y$-coordinate changes by at most $1$ from $P_k$ to $P_{k+1}$, if $A_{k+1}\cap A_{k} = \emptyset$, then all lattice points in $P_k$ with $x$-coordinate $a_k-1$ are moved up by $1$ and all lattice points with $x$-coordinate $a_{k+1}-1$ are moved down by $1$. This is impossible by \eqref{eqn:Jul10aaa}. Therefore $A_{k}\cap A_{k+1}\neq \emptyset$ and there exists $a_k$ such that $u[k]\leq_{a_k}v[k]$ and $u[k+1]\leq_{a_k} v[k+1]$.
\end{proof}
\begin{theorem}\label{thm:Tunion}
    $\mathcal{T}_{u,v} = \bigsqcup_{[x,y]\subseteq [u,v]}\mathcal{T}_{x,y}^\circ$.
\end{theorem}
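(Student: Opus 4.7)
The plan is to prove the equality in three stages: the containment $\supseteq$, the pairwise disjointness of the sets $\mathcal{T}_{x,y}^\circ$ on the right-hand side, and finally the containment $\subseteq$, with the last stage carrying all of the real work. Throughout I will lean on the Plücker description (Theorem~\ref{thm:altdefTplucker}) and the cyclically rotated Grassmannian Richardson characterization (Theorem~\ref{thm:altdefT}) in tandem.

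For $\supseteq$, if $[x,y]\subseteq[u,v]$ then the reverse complement containment $\{w:w\notin[u,v]\}\subseteq\{w:w\notin[x,y]\}$ combined with Theorem~\ref{thm:altdefTplucker} immediately gives $\mathcal{T}_{x,y}^\circ\subseteq\mathcal{T}_{x,y}\subseteq\mathcal{T}_{u,v}$. For disjointness, suppose $F_\bullet\in\mathcal{T}_{x,y}^\circ\cap\mathcal{T}_{x',y'}^\circ$ with $[x,y],[x',y']\subseteq[u,v]$. Fix any $\mathbf{a}$ with $u\leq_{\mathbf{a}} v$; by Corollary~\ref{cor:xy} we also have $x\leq_{\mathbf{a}} y$ and $x'\leq_{\mathbf{a}} y'$, so Theorem~\ref{thm:altdefT} forces $F_k\in\mathcal{R}_{x[k],y[k],a_k}^\circ\cap\mathcal{R}_{x'[k],y'[k],a_k}^\circ$ at each $k$. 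Because $\gr(k,n)=\bigsqcup_{[I,J]_{a_k}}\mathcal{R}_{I,J,a_k}^\circ$ is an honest disjoint union (the cyclic rotation of \eqref{eqn:grRichardson}), this forces $(x[k],y[k])=(x'[k],y'[k])$ for every $k$, and hence $(x,y)=(x',y')$.

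For the main containment $\subseteq$, given $F_\bullet\in\mathcal{T}_{u,v}$, fix $\mathbf{a}$ with $u\leq_{\mathbf{a}} v$. Theorem~\ref{thm:altdefT} places $F_k\in\mathcal{R}_{u[k],v[k],a_k}$, and the rotated stratification \eqref{eqn:grRichardson} then selects a unique pair $(I_k,J_k)$ with $[I_k,J_k]_{a_k}\subseteq[u[k],v[k]]_{a_k}$ and $F_k\in\mathcal{R}_{I_k,J_k,a_k}^\circ$; concretely, $I_k$ and $J_k$ are the $\leq_{a_k}$-minimum and $\leq_{a_k}$-maximum of $\{K\in\binom{[n]}{k}:P_K(F_k)\neq 0\}$.

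The main obstacle---and the heart of the proof---is to show that the sequences $(I_k)_k$ and $(J_k)_k$ actually arise from permutations, i.e.\ that $I_k\subsetneq I_{k+1}$ with $|I_{k+1}|=|I_k|+1$ (and analogously for $J_k$). I plan to address this by exploiting Lemma~\ref{lemma:a_k=a_k+1}: after adjusting $\mathbf{a}$ (permissibly, via Corollary~\ref{cor:independent-a}) so that $a_k=a_{k+1}$ whenever needed for a given consecutive pair, the flag condition $F_k\subset F_{k+1}$ combined with an incidence Plücker relation \eqref{eqn:incidenceplucker1} extends any subset in the support at level $k$ to one at level $k+1$; the $\leq_{a_k}$-minimality of $I_{k+1}$ then forces $I_k\subseteq I_{k+1}$, with the cardinality gap giving equality up to one element. (This is essentially the classical Grassmann-necklace chain property, transported through the cyclic shift.) Once the chain property is in hand, setting $x[k]:=I_k$ and $y[k]:=J_k$ produces permutations $x,y\in S_n$ with $u[k]\leq_{a_k}x[k]\leq_{a_k}y[k]\leq_{a_k}v[k]$; Theorem~\ref{thm:tilted-criterion} then gives $[x,y]\subseteq[u,v]$ and Theorem~\ref{thm:altdefT} yields $F_\bullet\in\mathcal{T}_{x,y}^\circ$, completing the decomposition.
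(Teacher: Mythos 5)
Your overall strategy mirrors the paper's: after the easy $\supseteq$ containment and disjointness, you take $F_\bullet\in\mathcal{T}_{u,v}$, define $I_k,J_k$ as the $\leq_{a_k}$-extremes of the support of $F_k$, invoke Lemma~\ref{lemma:a_k=a_k+1} together with Corollary~\ref{cor:independent-a} to pass to $a_k=a_{k+1}$ one pair at a time, establish the chain property $I_k\subsetneq I_{k+1}$ (and dually for $J_k$), and read off permutations $x,y$. Your explicit verifications of $\supseteq$ and of disjointness via Theorem~\ref{thm:altdefT}, Theorem~\ref{thm:altdefTplucker}, and Corollary~\ref{cor:xy} are correct and fill in what the paper dismisses as ``straightforward from definition.''

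However, there is a genuine gap at the chain step, and it is exactly the one nontrivial point of the whole proof. You assert that the extension property (each $K$ in the support of $F_k$ extends to some $K'\supset K$ in the support of $F_{k+1}$) together with $\leq_{a_k}$-minimality of $I_{k+1}$ forces $I_k\subseteq I_{k+1}$. This implication does not hold as stated: extending $I_k$ to some $I_k+j$ in the support of $F_{k+1}$ gives only $I_{k+1}\leq_{a_k} I_k+j$ in the \emph{Gale} order, which is no containment relation, and the element $j$ need not have anything to do with a restriction of $I_{k+1}$. To make a Plücker-based argument succeed one should instead invoke the specific incidence relation $P_{I_{k+1}}P_{I_k}=\sum_{i\in I_{k+1}}P_{I_{k+1}-i}P_{I_k+i}$, which supplies a single $i$ for which simultaneously $P_{I_{k+1}-i}(F_k)\neq 0$ and $P_{I_k+i}(F_{k+1})\neq 0$; minimality then gives $I_k\leq_{a_k}I_{k+1}-i$ and $I_{k+1}\leq_{a_k}I_k+i$, and even then one must run a short but nontrivial combinatorial argument (comparing the positions of $i$ in $I_{k+1}$ and in $I_k+i$) to deduce $I_k\subsetneq I_{k+1}$. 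None of this is in your sketch; the parenthetical appeal to ``the classical Grassmann-necklace chain property'' does not apply, since necklaces compare bases of fixed cardinality, not subsets of cardinalities $k$ and $k+1$. The paper sidesteps all of this with a cleaner linear-algebraic observation: a row dependency of $M_{k+1}$ among rows indexed by $[a_{k+1},j)_c$ restricts to the same dependency in $M_k$, so $j\notin I_{k+1}\Rightarrow j\notin I_k$. You should either adopt that argument or spell out the Plücker version, whose current form would not survive a careful reading.
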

\begin{proof}
    It is straightforward from definition that the right hand side is a disjoint union and that $\mathcal{T}_{x,y}^\circ\subset \mathcal{T}_{u,v}$ for all $[x,y]\subseteq [u,v]$. To prove the $\subsetneq$ direction, pick $F_\bullet \in \mathcal{T}_{u,v}$. We will show that $F_\bullet\in \mathcal{T}_{x,y}^\circ$ for some $[x,y]\subseteq [u,v]$.
    Pick $\mathbf{a}$ such that $u\leq_{\mathbf{a}} v$,
    for $k\in [n-1]$, set 
    \begin{align*}
    I_k =& \{i\in [n]: \dim(\mathrm{Proj}_{[a_k,j)_c}(F_k))< \dim(\mathrm{Proj}_{[a_k,j]_c}(F_k))\}, \\
    J_k =& \{j\in [n]: \dim(\mathrm{Proj}_{(i,a_k-1]_c}(F_k))< \dim(\mathrm{Proj}_{[i,a_k-1]_c}(F_k))\}.
    \end{align*}
    Then $F_k \in \mathcal{R}^\circ_{I_k,J_k,a_k}$.
    In particular, $I_k\leq_{a_k}J_k$ for all $k\in [n]$. Furthermore, $F_k\in \mathcal{R}_{u[k],v[k],a_k}$ implies that $[I,J]_{a_k}\subseteq [u[k],v[k]]_{a_k}$ is a subinterval in the shifted Gale order $\leq_{a_k}$.
    
    Fix $k\in [n-1]$, by Lemma~\ref{lemma:both-r-comparable} and \Cref{cor:independent-a}, the sets $I_k$ and $J_k$ are independent of the choice of $a_k$ as long as $u[k]\leq_{a_k} v[k]$. Combining with Lemma~\ref{lemma:a_k=a_k+1}, we can choose $\mathbf{a}$ such that $a_k = a_{k+1}$. Let $M\in {GL}_n$ be any matrix representative of $F_\bullet$ and let $M_k$ and $M_{k+1}$ denote the submatrix of $M$ consisting of the first $k$ and $k+1$ columns respectively. If
    \[\dim(\mathrm{Proj}_{[a_{k+1},j)_c}(F_{k+1})) = \dim(\mathrm{Proj}_{[a_{k+1},j]_c}(F_{k+1})),\]
    then row $j$ of $M_{k+1}$ lies in the row span of $M_{k+1}$ with row index in $[a_{k+1},j)_c$. Thus row $j$ of $M_{k}$ also lies in the row span of $M_{k}$ with row index in $[a_{k},j)_c$. This implies that $I_k\subset I_{k+1}$. The claim $J_k\subset J_{k+1}$ follows from similar reasoning. 
    
    Let $x,y\in S_n$ be permutations such that $x_k = I_k\setminus I_{k-1}$ and $y_k = J_k\setminus J_{k-1}$ for all $k\in [n]$. It then follows from definition of $I_k$ and $J_k$ that 
    $F_\bullet \in \mathcal{T}_{x,y}^\circ$ where $u\leq_{\mathbf{a}}x$ and $y\leq_{\mathbf{a}}v$. Since $x[k] = I_k\leq_{a_k} J_k = y[k]$ for all $k\in [n-1]$, namely $x\leq_\mathbf{a} y$, we conclude that any $F_\bullet\in \mathcal{T}_{u,v}$ lies in $\mathcal{T}_{x,y}^\circ$ for some $[x,y]\subset [u,v]$.
\end{proof}

\subsection{Dimension of $\mathcal{T}_{u,v}^\circ$ and $\mathcal{T}_{u,v}$}
In this subsection, we show that $\mathcal{T}_{u,v}^\circ$ and $\mathcal{T}_{u,v}$ have dimension $\ell(u,v)$ (\Cref{thm:uv}), the distance between $u,v$ in the quantum Bruhat graph (\Cref{sec:prelim}). We do this by analyzing $\mathcal{T}_{u,v}^{\circ}$ from different charts. 

For any permutation $x$, the permuted opposite Schubert cell $x\Omega_{id}^\circ$ is the set of flags
satisfying $P_x \neq 0$. These could also be realized as matrices with $1$'s in $(x_i,i)$ for all $i\in [n]$ and $0$'s to the right of $1$'s.

\begin{ex}\label{ex:chart}
    Let $x = 3142$, then $x\Omega_{id}^\circ$ can be identified as the following affine space with each $*$ being a copy of $\mathbb{C}$:
    \begin{center}
        \begin{tikzpicture}[scale = 0.7]
    \draw (0,0)--(4,0)--(4,4)--(0,4)--(0,0);
    \draw (1,0) -- (1,4);
    \draw (2,0) -- (2,4);
    \draw (3,0) -- (3,4);
    \draw (0,1) -- (4,1);
    \draw (0,2) -- (4,2);
    \draw (0,3) -- (4,3);
    \node at (-0.5,0.5) {$4$};
    \node at (-0.5,1.5) {$3$};
    \node at (-0.5,2.5) {$2$};
    \node at (-0.5,3.5) {$1$};
    \node at (0.5,-0.5) {$1$};
    \node at (1.5,-0.5) {$2$};
    \node at (2.5,-0.5) {$3$};
    \node at (3.5,-0.5) {$4$};
    \node at (0.5,1.5) {$1$};
    \node at (1.5,3.5) {$1$};
    \node at (2.5,0.5) {$1$};
    \node at (3.5,2.5) {$1$};
    \node at (1.5,1.5) {$0$};
    \node at (2.5,3.5) {$0$};
    \node at (2.5,1.5) {$0$};
    \node at (3.5,0.5) {$0$};
    \node at (3.5,3.5) {$0$};
    \node at (3.5,1.5) {$0$};
    \node at (2.5,2.5) {$*$};
    \node at (1.5,0.5) {$*$};
    \node at (1.5,2.5) {$*$};
    \node at (0.5,0.5) {$*$};
    \node at (0.5,2.5) {$*$};
    \node at (0.5,3.5) {$*$};
    \end{tikzpicture}
    \end{center}
\end{ex}
\begin{prop}\label{prop:uOmega}
    For any permutations $u,v\in S_n$,
    \begin{enumerate}
        \item if $x\notin [u,v]$, then $\mathcal{T}_{u,v} \cap x\Omega_{id}^\circ = \mathcal{T}_{u,v}^\circ \cap x\Omega_{id}^\circ = \emptyset$,
        \item $\mathcal{T}_{u,v}^\circ = \mathcal{T}_{u,v} \cap u\Omega_{id}^\circ \cap v\Omega_{id}^\circ$.
    \end{enumerate}
\end{prop}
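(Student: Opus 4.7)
The plan is to deduce both parts directly from Theorem~\ref{thm:altdefTplucker} by exploiting the description $x\Omega_{id}^\circ = \{F_\bullet \in \mathrm{Fl}_n : P_x(F_\bullet) \neq 0\}$ stated just above the proposition. Since $P_x = \prod_{k=1}^{n-1} P_{x[k]}$ by definition, the condition $P_x(F_\bullet) \neq 0$ is equivalent to $P_{x[k]}(F_\bullet) \neq 0$ for every $k \in [n-1]$. So the chart $x\Omega_{id}^\circ$ is precisely the non-vanishing locus of the multi-Pl\"ucker coordinate indexed by $x$.

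For part (1), suppose $x \notin [u,v]$. Theorem~\ref{thm:altdefTplucker} asserts that $P_w$ vanishes on all of $\mathcal{T}_{u,v}$ for every $w \notin [u,v]$; in particular $P_x$ vanishes identically on $\mathcal{T}_{u,v}$. Therefore no flag in $\mathcal{T}_{u,v}$ can lie in $\{P_x \neq 0\} = x\Omega_{id}^\circ$, giving $\mathcal{T}_{u,v} \cap x\Omega_{id}^\circ = \emptyset$. Since $\mathcal{T}_{u,v}^\circ \subseteq \mathcal{T}_{u,v}$, the same holds for the open variety.

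For part (2), Theorem~\ref{thm:altdefTplucker} also states $\mathcal{T}_{u,v}^\circ = \mathcal{T}_{u,v} \cap \{P_u P_v \neq 0\}$. Rewriting the non-vanishing condition as
\[
\{P_u P_v \neq 0\} = \{P_u \neq 0\} \cap \{P_v \neq 0\} = u\Omega_{id}^\circ \cap v\Omega_{id}^\circ,
\]
we immediately obtain $\mathcal{T}_{u,v}^\circ = \mathcal{T}_{u,v} \cap u\Omega_{id}^\circ \cap v\Omega_{id}^\circ$, as desired.

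There is no real obstacle in this proof; the proposition is essentially a repackaging of Theorem~\ref{thm:altdefTplucker} in terms of the affine cover of $\mathrm{Fl}_n$ by the cells $\{x\Omega_{id}^\circ\}_{x \in S_n}$. The content is entirely pushed into the Pl\"ucker characterization already established, and the only thing to verify is that the non-vanishing loci of $P_x$, $P_u$, and $P_v$ translate cleanly into the cell description, which is immediate from the definition $P_x = \prod_k P_{x[k]}$.
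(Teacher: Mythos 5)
Your proof is correct and is exactly the paper's argument: the paper proves this proposition by citing Theorem~\ref{thm:altdefTplucker} directly, using the identification $x\Omega_{id}^\circ=\{P_x\neq 0\}$ stated just before the proposition. You have simply written out the one-line deduction in full detail, with no difference in approach.
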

\begin{proof}
    This follows directly from Theorem~\ref{thm:altdefTplucker}.
\end{proof}


\begin{defin}\label{def:tiltedrothe}
    For $u\leq_{\mathbf{a}} v$, define $D_{\mathbf{a}}^{\downarrow}(u), D_{\mathbf{a}}^{\uparrow}(v)\subset [n]^2$ to be the \emph{tilted Rothe diagrams}
    \begin{align*}
    D_{\mathbf{a}}^{\downarrow}(u) =& \{(i,k): i<_{a_k} u_k, u^{-1}(i)>k\},\\
    D_{\mathbf{a}}^{\uparrow}(v) =& \{(i,k): i>_{a_k}v_k, v^{-1}(i)>k\}.
    \end{align*}
\end{defin}
\begin{ex}\label{ex:tilteddiagram}
We demonstrate \Cref{def:tiltedrothe} with
$u = 4321, v= 3142$ and $\mathbf{a} = (4,4,2)$ using diagrams in \Cref{fig:Du} and \Cref{fig:Dv}. Put $\bullet$ in position $(u_k,k)$ and $(v_k,k)$. Draw a red horizontal line in column $k$ immediately above $a_k$. These red lines represent the ``floor" in each column. For $D_{\mathbf{a}}^{\downarrow}(u)$, draw death rays to the right and down of each $\bullet$ until they hit the red horizontal line or right boundary. Similarly, draw death rays to the right and up for $D_{\mathbf{a}}^{\uparrow}(v)$. The boxes remain are $D_{\mathbf{a}}^{\downarrow}(u)$ and $D_{\mathbf{a}}^{\uparrow}(v)$. 
\begin{figure}[h!]
    \centering
    \subcaptionbox{$D_{\mathbf{a}}^{\downarrow}(u) = \{(1,2),(2,2)\}$ \label{fig:Du}}[.4\textwidth]{
        \begin{tikzpicture}[scale = 0.7]
    \fill [green, opacity  = 0.25] (1,4) rectangle (2,2);
    \draw (0,0)--(4,0)--(4,4)--(0,4)--(0,0);
    \draw (1,0) -- (1,4);
    \draw (2,0) -- (2,4);
    \draw (3,0) -- (3,4);
    \draw (0,1) -- (4,1);
    \draw (0,2) -- (4,2);
    \draw (0,3) -- (4,3);
    \node at (-0.5,0.5) {$4$};
    \node at (-0.5,1.5) {$3$};
    \node at (-0.5,2.5) {$2$};
    \node at (-0.5,3.5) {$1$};
    \node at (0.5,-0.5) {$1$};
    \node at (1.5,-0.5) {$2$};
    \node at (2.5,-0.5) {$3$};
    \node at (3.5,-0.5) {$4$};
    \node at (0.5,0.5) {$\bullet$};
    \node at (1.5,1.5) {$\bullet$};
    \node at (2.5,2.5) {$\bullet$};
    \node at (3.5,3.5) {$\bullet$};
    \draw[line width = 0.35mm] (0.5,0) -- (0.5,0.5) -- (4,0.5);
    \draw[line width = 0.35mm] (0.5,4) -- (0.5,1);
    \draw[line width = 0.35mm] (1.5,1) -- (1.5,1.5) -- (4,1.5);
    \draw[line width = 0.35mm] (2.5,0) -- (2.5,2.5) -- (4,2.5);
    \draw[line width = 0.35mm] (2.5,4) -- (2.5,3);
    \draw[line width = 0.35mm] (3.5,3) -- (3.5,3.5) -- (4,3.5);
    \draw[line width=0.55mm, red] (0,1) -- (2,1);
    \draw[line width=0.55mm, red] (2,3) -- (3,3);
    \end{tikzpicture}}
        \subcaptionbox{$D_{\mathbf{a}}^{\uparrow}(v) = \{(2,2)\}$ \label{fig:Dv}}[.4\textwidth]{
            \begin{tikzpicture}[scale = 0.7]
    \fill [green, opacity  = 0.25] (1,3) rectangle (2,2);
    \draw (0,0)--(4,0)--(4,4)--(0,4)--(0,0);
    \draw (1,0) -- (1,4);
    \draw (2,0) -- (2,4);
    \draw (3,0) -- (3,4);
    \draw (0,1) -- (4,1);
    \draw (0,2) -- (4,2);
    \draw (0,3) -- (4,3);
    \node at (-0.5,0.5) {$4$};
    \node at (-0.5,1.5) {$3$};
    \node at (-0.5,2.5) {$2$};
    \node at (-0.5,3.5) {$1$};
    \node at (0.5,-0.5) {$1$};
    \node at (1.5,-0.5) {$2$};
    \node at (2.5,-0.5) {$3$};
    \node at (3.5,-0.5) {$4$};
    \node at (0.5,1.5) {$\bullet$};
    \node at (1.5,3.5) {$\bullet$};
    \node at (2.5,0.5) {$\bullet$};
    \node at (3.5,2.5) {$\bullet$};
    \draw[line width = 0.35mm] (0.5,4) -- (0.5,1.5) -- (4,1.5);
    \draw[line width = 0.35mm] (0.5,0) -- (0.5,1);
    \draw[line width = 0.35mm] (1.5,4) -- (1.5,3.5) -- (4,3.5);
    \draw[line width = 0.35mm] (1.5,0) -- (1.5,1);
    \draw[line width = 0.35mm] (2.5,3) -- (2.5,0.5) -- (4,0.5);
    \draw[line width = 0.35mm] (3.5,3) -- (3.5,2.5) -- (4,2.5);
    \draw[line width=0.55mm, red] (0,1) -- (2,1);
    \draw[line width=0.55mm, red] (2,3) -- (3,3);
    \end{tikzpicture}}
        
    \end{figure}
\end{ex}

One may notice similarities between \Cref{fig:Du} and Rothe diagram of a permutation. Indeed, if $\mathbf{a} = (1,\dots,1)$, then $D_{\mathbf{a}}^{\downarrow}(u)$ is precisely the Rothe diagram $D(u^{-1})$. Moreover, on $u\Omega_{id}^\circ$, the vanishing of Pl\"ucker coordinates as in \Cref{def:plucker=0}(1) gives us the parametrization of $\Omega_u^\circ$ (see e.g. \cite[Section~2.2]{WooYong}).

\begin{defin}\label{def:plucker=0}
    For every pair of permutations $(u,v)$ and every choice of $\mathbf{a} = (a_1,\dots,a_{n-1})$ such that $u\leq_{\mathbf{a}}v$, define $\mathcal{I}_{u,v,\mathbf{a}}$ as the following set of equations:
\begin{enumerate}
    \item for every box $(i,k)\in D_{\mathbf{a}}^{\downarrow}(u)$, set $P_{u[k-1]+i}=0$,
    \item for every box $(i,k)\in D_{\mathbf{a}}^{\uparrow}(v)$, set $P_{v[k-1]+i}=0$.
\end{enumerate}

\begin{remark}
    An alternative way to phrase $\mathcal{I}_{u,v,\mathbf{a}}$ is, for column $k\in [n-1]$:
    \begin{itemize}
        \item under the chart $u\Omega_{id}^\circ$ as in Example~\ref{ex:chart}, set entries in the diagram $D^\downarrow_\mathrm{a}(u)$ to be $0$. 
        \item under the chart $v\Omega_{id}^\circ$, set entries in the diagram $D^\uparrow_\mathrm{a}(v)$ to be $0$. 
    \end{itemize}
\end{remark}
\end{defin}

Up until now, we have been working with any sequence $\mathbf{a}$ such that $u\leq_{\mathbf{a}}v$. There are many such choices for $\mathbf{a}$ and any one of them works. However, we are now going to distinguish some choices from the rest, that are particularly nice.
\begin{defin}
For a pair of permutations $u,v$, a sequence $\mathbf{a} = (a_1,\dots,a_{n-1})$ is \emph{flat} if 
    \begin{itemize}
        \item $u\leq_{\mathbf{a}} v$ and
        \item $u[k-1]\leq_{a_k} v[k-1]$ for all $k\in [2,n-1]$.
    \end{itemize}
\end{defin}
We note that this is equivalent to choosing $a_k\in (u_k,v_k]_c$ whenever possible. These are also the choices of $\mathbf{a}$'s such that the number of equations in $\mathcal{I}_{u,v,\mathbf{a}}$ is maximized. 
\begin{lemma}\label{lemma:flat}
For any pair of permutations $u,v$, there exists $\mathbf{a}$ that is flat. Moreover, for any such $\mathbf{a}$, it is also flat with respect to all pairs of $(x,y)$ such that $[x,y]\subseteq [u,v]$.
\end{lemma}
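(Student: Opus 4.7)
The plan is to build a flat sequence coordinate-by-coordinate using Lemma~\ref{lemma:a_k=a_k+1}, then deduce the inheritance statement from Corollary~\ref{cor:xy} via a one-coordinate swap trick.

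For existence, I would first pick any $a_1$ satisfying $u[1]\leq_{a_1}v[1]$, which is possible by Lemma~\ref{lemma:r-comparable}. For each $k\in[2,n-1]$, apply Lemma~\ref{lemma:a_k=a_k+1} at index $k-1\in[n-2]$: this provides some value $a\in[n]$ satisfying both $u[k-1]\leq_a v[k-1]$ and $u[k]\leq_a v[k]$, and we set $a_k:=a$. The resulting sequence $\mathbf{a}=(a_1,\ldots,a_{n-1})$ then satisfies $u\leq_{\mathbf{a}}v$ together with $u[k-1]\leq_{a_k}v[k-1]$ for every $k\in[2,n-1]$, so it is flat.

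For the inheritance statement, fix any subinterval $[x,y]\subseteq[u,v]$ and a flat sequence $\mathbf{a}$ for $(u,v)$; I need to verify both defining conditions of flatness for $(x,y)$. The first condition $x\leq_{\mathbf{a}}y$, namely $x[k]\leq_{a_k}y[k]$ for all $k\in[n-1]$, follows immediately from Corollary~\ref{cor:xy} applied to the sequence $\mathbf{a}$ itself. For the second condition $x[k-1]\leq_{a_k}y[k-1]$ with $k\in[2,n-1]$, I would invoke Corollary~\ref{cor:xy} for a modified sequence. Define $\mathbf{a}'=(a'_1,\ldots,a'_{n-1})$ by setting $a'_{k-1}:=a_k$ and $a'_j:=a_j$ for $j\neq k-1$. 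Flatness of $\mathbf{a}$ for $(u,v)$ supplies the nontrivial inequality $u[k-1]\leq_{a_k}v[k-1]$, and combined with $u[j]\leq_{a_j}v[j]$ for $j\neq k-1$ this shows $u\leq_{\mathbf{a}'}v$. Corollary~\ref{cor:xy} applied to $\mathbf{a}'$ then yields $x[k-1]\leq_{a'_{k-1}}y[k-1]$, i.e., $x[k-1]\leq_{a_k}y[k-1]$, as desired.

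I do not anticipate any serious obstacle: the statement is essentially bookkeeping once Lemma~\ref{lemma:a_k=a_k+1} and Corollary~\ref{cor:xy} are available. The only subtle move is recognizing that the second flatness condition at index $k$ requires applying Corollary~\ref{cor:xy} to a sequence where position $k-1$ has been swapped to $a_k$, rather than to $\mathbf{a}$ directly.
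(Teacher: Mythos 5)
Your proposal is correct and fills in exactly the details behind the paper's terse citation of Lemma~\ref{lemma:a_k=a_k+1} and Corollary~\ref{cor:xy}; the one-coordinate swap of $\mathbf{a}$ at position $k-1$ is indeed the right way to make Corollary~\ref{cor:xy} deliver the second flatness condition for $(x,y)$.
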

\begin{proof}
    This follows from Lemma~\ref{lemma:a_k=a_k+1} and Corollary~\ref{cor:xy}.
\end{proof}

\begin{lemma}\label{lemma:equationcount}
    For any $u,v$ and any flat $\mathbf{a}$, there are ${n\choose 2} - \ell(u,v)$ equations in $\mathcal{I}_{u,v,\mathbf{a}}$. 
\end{lemma}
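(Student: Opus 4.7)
The plan is to count $|D_{\mathbf{a}}^{\downarrow}(u)|+|D_{\mathbf{a}}^{\uparrow}(v)|$ column by column and rewrite each contribution via Lehmer codes of rotated permutations. Let $\tau=(1\,2\,\cdots\,n)$ and, for any $w\in S_n$, set $\hat{w}^{(k)}:=\tau^{1-a_k}w$. Since $i<_{a_k}j$ iff $\tau^{1-a_k}(i)<\tau^{1-a_k}(j)$, the $k$-th column of $D_{\mathbf{a}}^{\downarrow}(u)$ contributes
\[
\#\bigl\{i:i<_{a_k}u_k,\ u^{-1}(i)>k\bigr\}=\#\bigl\{j>k:\hat{u}^{(k)}_j<\hat{u}^{(k)}_k\bigr\}=c_k(\hat{u}^{(k)}),
\]
where $c_k(w):=\#\{j>k:w_j<w_k\}$ is the $k$-th Lehmer code entry; dually, the $k$-th column of $D_{\mathbf{a}}^{\uparrow}(v)$ contributes $(n-k)-c_k(\hat{v}^{(k)})$. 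Summing,
\[
|D_{\mathbf{a}}^{\downarrow}(u)|+|D_{\mathbf{a}}^{\uparrow}(v)|=\binom{n}{2}+\sum_{k=1}^{n-1}\bigl(c_k(\hat{u}^{(k)})-c_k(\hat{v}^{(k)})\bigr),
\]
so the lemma reduces to proving $\sum_k\bigl(c_k(\hat{v}^{(k)})-c_k(\hat{u}^{(k)})\bigr)=\ell(u,v)$.

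I will use the identity $\ell(u,v)=\ell(v)-\ell(u)+2\sum_id_i(u,v)$, which follows by tracking edge-by-edge length changes along any shortest $u\rightsquigarrow v$ path (a Bruhat edge contributes $+1$ to $\ell$; a quantum edge along $e_i-e_j$ contributes $1-2(j-i)$ to $\ell$ and $j-i$ to the $q$-exponent sum), together with Lemma~\ref{lem:shortest-length-equals-weight} and Theorem~\ref{thm:weight-distance}. Setting $\delta_k^w:=c_k(\hat{w}^{(k)})-c_k(w)$ and using the classical $\sum_kc_k(w)=\ell(w)$, the target becomes $\sum_k(\delta_k^v-\delta_k^u)=2\sum_kd_k(u,v)$. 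Because $<$ and $<_{a_k}$ disagree exactly on pairs straddling $a_k$, a short case split on whether $w_k\geq a_k$ produces
\[
\delta_k^w=\mathbb{1}[w_k<a_k]\cdot(n-k)-X_k(w),\qquad X_k(w):=\#\{j>k:w_j<a_k\}.
\]
Since $X_k(w)=(a_k-1)-|w[k]\cap[a_k-1]|$ and $U_k\leq_{a_k}V_k$ forces $P(u[k],v[k])$ to reach its minimum $-d_k(u,v)$ at $x=a_k-1$ by Lemma~\ref{lemma:r-comparable}, one gets $X_k(u)-X_k(v)=d_k(u,v)$, and combining this with the case split yields
\[
\delta_k^v-\delta_k^u=d_k(u,v)+\epsilon_k,\qquad \epsilon_k:=(n-k)\bigl(\mathbb{1}[v_k<a_k]-\mathbb{1}[u_k<a_k]\bigr).
\]

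The final step uses flatness to recognize $\epsilon_k$ as a telescoping difference. Flatness is precisely the statement that both $P(u[k-1],v[k-1])$ and $P(u[k],v[k])$ attain their minima at the same point $x=a_k-1$, so evaluating each path there gives
\[
d_{k-1}(u,v)-d_k(u,v)=\mathbb{1}[u_k<a_k]-\mathbb{1}[v_k<a_k],
\]
by a direct step computation using $u[k]=u[k-1]\sqcup\{u_k\}$ and $v[k]=v[k-1]\sqcup\{v_k\}$. Hence $\epsilon_k=(n-k)\bigl(d_k(u,v)-d_{k-1}(u,v)\bigr)$, and Abel summation with $d_0(u,v)=0$ gives
\[
\sum_{k=1}^{n-1}\epsilon_k=\sum_{k=1}^{n-1}(n-k)\bigl(d_k(u,v)-d_{k-1}(u,v)\bigr)=\sum_{k=1}^{n-1}d_k(u,v),
\]
so $\sum_k(\delta_k^v-\delta_k^u)=2\sum_kd_k(u,v)$, finishing the proof. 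The main obstacle will be the sign bookkeeping in the case analysis for $\delta_k^w$ and verifying that flatness enters at precisely the right place to produce the telescoping identity for $\epsilon_k$; the rest of the argument is a clean rewriting plus Abel summation.
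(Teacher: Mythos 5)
Your proof is correct, and it takes a genuinely different route from the paper. The paper proves Lemma~\ref{lemma:equationcount} by induction on $\ell(u,v)$: the base case $u=v$ gives $\binom{n}{2}$ boxes, and the inductive step compares the diagrams $D_{\mathbf{a}}^{\uparrow}(v)$ and $D_{\mathbf{a}}^{\uparrow}(x)$ for $x=vt_{p,q}$ column by column, showing columns outside $\{p,q\}$ keep the same count while columns $p,q$ together gain exactly one box. Your argument is instead a closed-form computation: you convert the column counts into Lehmer code entries of rotated permutations $\hat{w}^{(k)}=\tau^{1-a_k}w$, use the path-length identity $\ell(u,v)=\ell(v)-\ell(u)+2\sum_id_i(u,v)$ (correct, by tracking Coxeter length and $q$-degree along any shortest path together with Lemma~\ref{lem:shortest-length-equals-weight} and Theorem~\ref{thm:weight-distance}), and reduce the claim to $\sum_k(\delta_k^v-\delta_k^u)=2\sum_kd_k(u,v)$, which you verify via the observation that $X_k(u)-X_k(v)=d_k(u,v)$ (evaluating $P(u[k],v[k])$ at its minimum $x=a_k-1$) plus the flatness-driven telescoping identity $d_{k-1}-d_k=\mathbb{1}[u_k<a_k]-\mathbb{1}[v_k<a_k]$ and Abel summation. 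All the intermediate identities check out, including the case split for $\delta_k^w$. The trade-off is that your approach is shorter and avoids induction entirely, whereas the paper's edge-by-edge comparison of $D_{\mathbf{a}}^{\uparrow}(v)$ versus $D_{\mathbf{a}}^{\uparrow}(x)$ is machinery the authors reuse immediately afterward (Definition~\ref{def:uxplucker=0} and the proof of Proposition~\ref{prop:uxvanishing} rely on precisely that box-by-box comparison), so the paper's longer inductive proof is not wasted effort in context.
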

\begin{proof}
We proceed by induction on $\ell(u,v)$. If $\ell(u,v) = 0$, we have $u = v$ and any sequence $\mathbf{a}$ is flat. For any $k\in [n-1]$ and any $a_k$, the number of entries in column $k$ of $D_{\mathbf{a}}^{\downarrow}(u)$ and $D_{\mathbf{a}}^{\uparrow}(v)$ sum up to $n-k$. Therefore $\mathcal{I}_{u,v,\mathbf{a}}$ consists of ${n\choose 2}$ equations.

    Suppose now that the statement holds when $\ell(u,v) = \ell$. For any pair $u,v$ such that $\ell(u,v) = \ell+1$, let $x = vt_{p,q}\in [u,v]$ be any permutation such that $\ell(u,x) = \ell$. Fix any flat $\mathbf{a}$ with respect to $u$ and $v$, by Lemma~\ref{lemma:flat}, $\mathbf{a}$ is also flat for $u,x$. 
    We are then left to show that there is one more equation from Definition~\ref{def:plucker=0} $(2)$ in $\mathcal{I}_{u,x,\mathbf{a}}$ than in $\mathcal{I}_{u,v,\mathbf{a}}$. We compare the diagrams $D_{\mathbf{a}}^{\uparrow}(v)$ and $D_{\mathbf{a}}^{\uparrow}(x)$ in each column $k$.

    For $k< p$ or $k> q$, since $v_k = x_k$ and $v[k] = x[k]$, 
    \begin{equation}\label{eqn:Aug19poi}
        (i,k)\in D_{\mathbf{a}}^{\uparrow}(v) \iff (i,k)\in D_{\mathbf{a}}^{\uparrow}(x).
    \end{equation}

    Consider now $k\in [p+1,q-1]$. In this case we still have $x_k = v_k$. 
        Since $x\rightarrow v = xt_{p,q}$ is an edge in quantum Bruhat graph, we have $x_k\notin [x_p,x_q]_c$. Since $x<_{\mathbf{a}}v$, for $k\in [p,q-1]$,
    $x[k]  <_{a_k} v[k]= x[k]\setminus \{x_p\}\cup \{x_q\}$. Therefore $x_p<_{a_k}x_q$ and thus $a_k\notin (x_p,x_q]_c$. 
    Therefore either both $x_p,x_q \in [x_k,a_{k}-1]_c$, in which case
    \begin{equation}\label{eqn:Aug19qwe}
        \{i: (i,k)\in D_{\mathbf{a}}^{\uparrow}(v)\} = \{i: (i,k)\in D_{\mathbf{a}}^{\uparrow}(x)\} \setminus \{x_p\} \cup \{x_q\},
    \end{equation}
    or $x_p,x_q\notin [x_k,a_{k}-1]_c$, in which case
    \begin{equation}\label{eqn:Aug20qwe}
        \{i: (i,k)\in D_{\mathbf{a}}^{\uparrow}(v)\} = \{i: (i,k)\in D_{\mathbf{a}}^{\uparrow}(x)\}.
    \end{equation}
    In both cases, $D_{\mathbf{a}}^{\uparrow}(v)$ and $D_{\mathbf{a}}^{\uparrow}(x)$ have the same number of boxes in column $k$.

    We are left to consider column $p$ and $q$. 
    Since $x_p<_{a_p} x_q$ and $x[p-1] = v[p-1]$,
    \begin{equation}\label{eqn:Aug19ggg}
        \{i: (i,p)\in D_{\mathbf{a}}^{\uparrow}(x)\} = \{i: (i,p)\in D_{\mathbf{a}}^{\uparrow}(v)\}\sqcup \{i\in [x_p,x_q]_c: x^{-1}(i)>p\}.
    \end{equation}
    Since $\mathbf{a}$ is flat, $x[q-1]<_{a_q}v[q-1] = x[q-1]\setminus \{x_p\} \cup \{x_q\}$ and thus $x_p<_{a_q} x_q$. Therefore 
    \begin{equation}\label{eqn:Aug19fff}
        \{i: (i,q)\in D_{\mathbf{a}}^{\uparrow}(v)\} = \{i: (i,q)\in D_{\mathbf{a}}^{\uparrow}(x)\}\sqcup \{i\in [x_p,x_q]_c: x^{-1}(i)>q \}.
    \end{equation}
    Since $x_k\notin [x_p,x_q]_c$ for all $k\in [p+1,q-1]$, 
    \[\{i\in [x_p,x_q]_c: x^{-1}(i)>p \}  = \{i\in [x_p,x_q]_c: x^{-1}(i)>q \} \cup \{x_q\}.\]
    Therefore $D_{\mathbf{a}}^{\uparrow}(x)\}$ has one more box than $D_{\mathbf{a}}^{\uparrow}(v)\}$ in column $p$ and $q$ combined.
    
    Combining Equations \eqref{eqn:Aug19poi}-\eqref{eqn:Aug19fff}, we conclude that there is one more equation from Definition~\ref{def:plucker=0} $(2)$ in $\mathcal{I}_{u,x,\mathbf{a}}$ than in $\mathcal{I}_{u,v,\mathbf{a}}$. We are then done by induction.
\end{proof}
\begin{ex}\label{ex:vx}
    We use the following example to help understand the proof of \Cref{lemma:equationcount}. Let $v = 465123, u = 263145, x = 265143$ and $\mathbf{a} = (2,2,2,6,6)$. One can verify that $x\in [u,v]$ and that $u\leq_{\mathbf{a}}v$. We draw $D_{\mathbf{a}}^{\uparrow}(v)$ and $D_{\mathbf{a}}^{\uparrow}(x)$ as in \Cref{ex:tilteddiagram}. Notice that both diagrams have the same number of boxes in  columns $2,3,4,6$, and the number of boxes in column $1,5$ combined differ by $1$.
    \begin{figure}[h!]
    \centering
    \subcaptionbox{$D_{\mathbf{a}}^{\uparrow}(v)$}[.4\textwidth]{
        \begin{tikzpicture}[scale = 0.6]
    \fill [green, opacity  = 0.25] (0,2) rectangle (1,0);
    \fill [green, opacity  = 0.25] (0,6) rectangle (3,5);
    \fill [green, opacity  = 0.25] (3,5) rectangle (4,3);
    \fill [green, opacity  = 0.25] (4,4) rectangle (5,3);
    \draw (0,0)--(6,0)--(6,6)--(0,6)--(0,0);
    \draw (1,0) -- (1,6);
    \draw (2,0) -- (2,6);
    \draw (3,0) -- (3,6);
    \draw (4,0) -- (4,6);
    \draw (5,0) -- (5,6);
    \draw (0,1) -- (6,1);
    \draw (0,2) -- (6,2);
    \draw (0,3) -- (6,3);
    \draw (0,4) -- (6,4);
    \draw (0,5) -- (6,5);
    \node at (-0.5,0.5) {$6$};
    \node at (-0.5,1.5) {$5$};
    \node at (-0.5,2.5) {$4$};
    \node at (-0.5,3.5) {$3$};
    \node at (-0.5,4.5) {$2$};
    \node at (-0.5,5.5) {$1$};
    \node at (0.5,-0.5) {$1$};
    \node at (1.5,-0.5) {$2$};
    \node at (2.5,-0.5) {$3$};
    \node at (3.5,-0.5) {$4$};
    \node at (4.5,-0.5) {$5$};
    \node at (5.5,-0.5) {$6$};
    \node at (0.5,2.5) {$\bullet$};
    \node at (1.5,0.5) {$\bullet$};
    \node at (2.5,1.5) {$\bullet$};
    \node at (3.5,5.5) {$\bullet$};
    \node at (4.5,4.5) {$\bullet$};
    \node at (5.5,3.5) {$\bullet$};
    \draw[line width = 0.35mm] (0.5,5) -- (0.5,2.5) -- (6,2.5);
    \draw[line width = 0.35mm] (1.5,5) -- (1.5,0.5) -- (6,0.5);
    \draw[line width = 0.35mm] (2.5,5) -- (2.5,1.5) -- (6,1.5);
    \draw[line width = 0.35mm] (3.5,6) -- (3.5,5.5) -- (6,5.5);
    \draw[line width = 0.35mm] (3.5,0) -- (3.5,1);
    \draw[line width = 0.35mm] (4.5,6) -- (4.5,4.5) -- (6,4.5);
    \draw[line width = 0.35mm] (5.5,3.5) -- (6,3.5);
    \draw[line width = 0.35mm] (4.5,0) -- (4.5,1);
    \draw[line width=0.55mm, red] (0,5) -- (3,5);
    \draw[line width=0.55mm, red] (3,1) -- (5,1);
    \end{tikzpicture}}
        \subcaptionbox{$D_{\mathbf{a}}^{\uparrow}(x)$ }[.4\textwidth]{
            \begin{tikzpicture}[scale = 0.6]
    \fill [green, opacity  = 0.25] (0,4) rectangle (1,0);
    \fill [green, opacity  = 0.25] (0,6) rectangle (3,5);
    \fill [green, opacity  = 0.25] (3,4) rectangle (4,2);
    \draw (0,0)--(6,0)--(6,6)--(0,6)--(0,0);
    \draw (1,0) -- (1,6);
    \draw (2,0) -- (2,6);
    \draw (3,0) -- (3,6);
    \draw (4,0) -- (4,6);
    \draw (5,0) -- (5,6);
    \draw (0,1) -- (6,1);
    \draw (0,2) -- (6,2);
    \draw (0,3) -- (6,3);
    \draw (0,4) -- (6,4);
    \draw (0,5) -- (6,5);
    \node at (-0.5,0.5) {$6$};
    \node at (-0.5,1.5) {$5$};
    \node at (-0.5,2.5) {$4$};
    \node at (-0.5,3.5) {$3$};
    \node at (-0.5,4.5) {$2$};
    \node at (-0.5,5.5) {$1$};
    \node at (0.5,-0.5) {$1$};
    \node at (1.5,-0.5) {$2$};
    \node at (2.5,-0.5) {$3$};
    \node at (3.5,-0.5) {$4$};
    \node at (4.5,-0.5) {$5$};
    \node at (5.5,-0.5) {$6$};
    \node at (0.5,4.5) {$\bullet$};
    \node at (1.5,0.5) {$\bullet$};
    \node at (2.5,1.5) {$\bullet$};
    \node at (3.5,5.5) {$\bullet$};
    \node at (4.5,2.5) {$\bullet$};
    \node at (5.5,3.5) {$\bullet$};
    \draw[line width = 0.35mm] (0.5,5) -- (0.5,4.5) -- (6,4.5);
    \draw[line width = 0.35mm] (1.5,5) -- (1.5,0.5) -- (6,0.5);
    \draw[line width = 0.35mm] (2.5,5) -- (2.5,1.5) -- (6,1.5);
    \draw[line width = 0.35mm] (3.5,6) -- (3.5,5.5) -- (6,5.5);
    \draw[line width = 0.35mm] (3.5,0) -- (3.5,1);
    \draw[line width = 0.35mm] (4.5,6) -- (4.5,2.5) -- (6,2.5);
    \draw[line width = 0.35mm] (5.5,3.5) -- (6,3.5);
    \draw[line width = 0.35mm] (4.5,0) -- (4.5,1);
    \draw[line width=0.55mm, red] (0,5) -- (3,5);
    \draw[line width=0.55mm, red] (3,1) -- (5,1);
    \end{tikzpicture}}
        
    \end{figure}
\end{ex}

\begin{prop}\label{prop:vanishing}
    For any $u,v$ and any flat $\mathbf{a}$,
    $\mathcal{T}^\circ_{u,v}=u\Omega_{id}^\circ \cap v\Omega_{id}^\circ\cap V(\mathcal{I}_{u,v,\mathbf{a}})$, where $V(\mathcal{I}_{u,v,\mathbf{a}})$ is the vanishing locus of equations in $\mathcal{I}_{u,v,\mathbf{a}}$.
\end{prop}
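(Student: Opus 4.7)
By Proposition~\ref{prop:uOmega}(2) we have $\mathcal{T}_{u,v}^\circ = \mathcal{T}_{u,v} \cap u\Omega_{id}^\circ \cap v\Omega_{id}^\circ$. Combined with Theorem~\ref{thm:altdefT} and Proposition~\ref{prop:RIJadef}, the proposition reduces to showing that, on the chart $u\Omega_{id}^\circ \cap v\Omega_{id}^\circ$, the vanishing of the finitely many Plücker coordinates in $\mathcal{I}_{u,v,\mathbf{a}}$ is equivalent to the full list of defining equations $P_I(F_k)=0$ for every $k\in[n-1]$ and every $I\notin [u[k],v[k]]_{a_k}$. I will prove the two inclusions separately.

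The inclusion $\mathcal{T}_{u,v}^\circ \subseteq u\Omega_{id}^\circ \cap v\Omega_{id}^\circ \cap V(\mathcal{I}_{u,v,\mathbf{a}})$ will follow by checking each generator of $\mathcal{I}_{u,v,\mathbf{a}}$ individually. For $(i,k)\in D_{\mathbf{a}}^{\downarrow}(u)$, the conditions $i <_{a_k} u_k$ and $u^{-1}(i) > k$ force $i \notin u[k-1]$, so $u[k-1]+i$ is a genuine size-$k$ subset differing from $u[k]$ only by replacing $u_k$ with $i$. The lattice path $P(u[k-1]+i,\, u[k])$ has its unique upstep at $i$ and unique downstep at $u_k$, and the inequality $i<_{a_k}u_k$ immediately yields $u[k-1]+i <_{a_k} u[k]$ strictly. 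Hence $u[k-1]+i \notin [u[k],v[k]]_{a_k}$, so $P_{u[k-1]+i}$ vanishes on $\mathcal{T}_{u,v}$ by Theorem~\ref{thm:altdefT} and Proposition~\ref{prop:RIJadef}. The analogous argument with $v[k]$ handles $(i,k)\in D_{\mathbf{a}}^{\uparrow}(v)$.

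For the reverse inclusion, the plan is to derive every required vanishing $P_I(F_k)=0$ for $I\notin [u[k],v[k]]_{a_k}$ from the equations in $\mathcal{I}_{u,v,\mathbf{a}}$, using the non-vanishing of $P_{u[j]}$ and $P_{v[j]}$ on the chart together with the incidence Plücker relations. The key tool is the Case I relation \eqref{eqn:incidenceplucker1}: for $I\ngeq_{a_k} u[k]$, I would take $J=u[k-1]$ to obtain
\[
P_I(F_k)\cdot P_{u[k-1]}(F_{k-1}) \;=\; \sum_{i\in I\setminus u[k-1]} \pm\, P_{I-i}(F_{k-1})\cdot P_{u[k-1]+i}(F_k).
\]
Since $P_{u[k-1]}(F_{k-1})\neq 0$ on $u\Omega_{id}^\circ$, one can divide through. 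Many summands on the right vanish immediately from $\mathcal{I}_{u,v,\mathbf{a}}$: precisely those with $i<_{a_k} u_k$ and $i\neq u_k$, which correspond to boxes of $D_{\mathbf{a}}^{\downarrow}(u)$ in column $k$. The remaining summands would then be controlled by an induction on some combinatorial measure such as $|I\triangle u[k]|$, with base case supplied by the generators of $\mathcal{I}_{u,v,\mathbf{a}}$. The parallel argument with $v[k-1]$ in place of $u[k-1]$ treats the symmetric case $I\nleq_{a_k}v[k]$.

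The main obstacle will be organizing this induction so that every term introduced by the Plücker expansion is either annihilated by $\mathcal{I}_{u,v,\mathbf{a}}$ or of strictly smaller induction rank. The delicate case is when a summand $P_{u[k-1]+i}$ on the right lies strictly above $u[k]$ in shifted Gale order, so it is not killed by any box of $D_{\mathbf{a}}^{\downarrow}(u)$; handling this requires invoking the $v$-side equations or iterating the relation with a different choice of $J$, and it is here that the flatness of $\mathbf{a}$ becomes essential. Flatness simultaneously guarantees that $P_{u[k-1]}$ and $P_{v[k-1]}$ are available for expansion at level $k-1$, and by Lemma~\ref{lemma:equationcount} delivers exactly $\binom{n}{2}-\ell(u,v)$ generators, which is the expected codimension and strongly suggests that the $u$- and $v$-side expansions assemble without redundancy into a complete derivation.
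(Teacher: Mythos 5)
Your forward inclusion is correct and is essentially the paper's: each generator of $\mathcal{I}_{u,v,\mathbf{a}}$ is a Pl\"ucker coordinate $P_{u[k-1]+i}$ with $u[k-1]+i<_{a_k}u[k]$ (resp.\ $P_{v[k-1]+i}$ with $v[k-1]+i>_{a_k}v[k]$), hence vanishes on $\mathcal{T}_{u,v}$ by Theorem~\ref{thm:altdefT} and Proposition~\ref{prop:RIJadef}, and Proposition~\ref{prop:uOmega}(2) supplies the chart condition.

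The reverse inclusion, however, is only a plan, and it stalls exactly where the paper's argument has its one essential idea. After expanding $P_{u[k-1]}P_I=\sum_{i\in I}P_{I-i}P_{u[k-1]+i}$ for $I\not\geq_{a_k}u[k]$, you correctly dispose of the summands with $i<_{a_k}u_k$ (either $i\in u[k-1]$, so the coordinate is trivially zero, or $(i,k)\in D_{\mathbf{a}}^{\downarrow}(u)$ and the generator kills it), but for the summands with $i\geq_{a_k}u_k$ you observe that $P_{u[k-1]+i}$ need not vanish and propose to handle them by an induction on $|I\triangle u[k]|$, by invoking the $v$-side equations, or by iterating with a different $J$ --- none of which is carried out, and none of which is the actual mechanism. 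The point you are missing is that for such $i$ it is the \emph{other} factor that dies: $I\not\geq_{a_k}u[k]$ together with $i\geq_{a_k}u_k$ forces $I-i\not\geq_{a_k}u[k-1]$, so $P_{I-i}(F_{k-1})=0$ once one knows $F_{k-1}\in\mathcal{R}_{u[k-1],v[k-1],a_k}$. This is precisely where flatness enters: the paper inducts on the flag index $k$ (not on $|I\triangle u[k]|$), the level-$(k-1)$ membership is available for the rotation point $a_{k-1}$, and flatness gives $u[k-1]\leq_{a_k}v[k-1]$, so Lemma~\ref{lemma:rotateRich} identifies $\mathcal{R}_{u[k-1],v[k-1],a_{k-1}}=\mathcal{R}_{u[k-1],v[k-1],a_k}$ and the vanishing of $P_{I-i}$ follows. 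With that observation every summand vanishes, $P_{u[k-1]}\neq 0$ on the chart forces $P_I=0$, and the symmetric expansion against $v[k-1]$ handles $I\not\leq_{a_k}v[k]$; no interplay between the $u$- and $v$-side equations is needed, and the equation count of Lemma~\ref{lemma:equationcount} plays no role here (it is used only for the dimension bound in Theorem~\ref{thm:uv}), so the codimension heuristic at the end of your proposal is not a substitute for the missing step. As written, your ``delicate case'' is a genuine gap.
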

\begin{proof}
    We first show $\subseteq$. This is equivalent to proving that for every flag $F_\bullet\in \mathcal{T}^\circ_{u,v}$, $F_\bullet$ satisfies the equations in $\mathcal{I}_{u,v,\mathbf{a}}$. There are two type of equations in $\mathcal{I}_{u,v,\mathbf{a}}$:
    
    \noindent\textbf{Type (1).} For $(i,k)\in D^\downarrow_\mathbf{a}(u)$, since $i<_{a_k}u_k\implies u[k-1]\cup\{i\}<_{a_k}u[k]$, by Proposition~\ref{prop:RIJadef}
        \[F_\bullet\in\pi_k^{-1}(\mathcal{R}_{u[k],v[k],a_k})\implies P_{u[k-1]+i}(F_\bullet)=0;\]
        
    \noindent\textbf{Type (2).} For $(i,k)\in D^\uparrow_\mathbf{a}(v)$, since $i>_{a_k}v_k\implies v[k-1]\cup\{i\}>_{a_k}v[k]$, by Proposition~\ref{prop:RIJadef}
        \[F_\bullet\in\pi_k^{-1}(\mathcal{R}_{u[k],v[k],a_k})\implies P_{v[k-1]+i}(F_\bullet)=0.\]
    This implies $\mathcal{T}^\circ_{u,v}\subseteq V(\mathcal{I}_{u,v,\mathbf{a}})$. Combining with Proposition~\ref{prop:uOmega}, we are done.

    We now show $\supseteq$. It is enough to show that $\pi_k^{-1}(\mathcal{R}_{u[k],v[k],a_k})\supseteq V(\mathcal{I}_{u,v,\mathbf{a}})$ on the chart $u\Omega_{id}^\circ \cap v\Omega_{id}^\circ$ for all $k\in [n-1]$.
    We proceed by induction on $k$. Define $\mathcal{I}^k_{u,v,\mathbf{a}}\subseteq\mathcal{I}_{u,v,\mathbf{a}}$ to be the subset of equations that correspond to boxes in the $k$-th column of $D^\downarrow_\mathbf{a}(u)$ and $D^\uparrow_\mathbf{a}(v)$. For $k = 1$, $F_\bullet\in \pi_1^{-1}(\mathcal{R}_{u[k],v[k],a_k})$ if and only if $P_i(F_\bullet) = 0$ for all $i\notin [u_1,v_1]_c$. Notice that these are precisely the equations in $\mathcal{I}^1_{u,v,\mathbf{a}}$. Therefore $\pi_1^{-1}(\mathcal{R}_{u_1,v_1,a_1})\supseteq V(\mathcal{I}_{u,v,\mathbf{a}})$. 
    
    The induction step is to show that for any $F_\bullet \in u\Omega_{id}^\circ \cap v\Omega_{id}^\circ$:
    \[
         F_\bullet\in\pi_{k-1}^{-1}(\mathcal{R}_{u[k-1],v[k-1],a_{k-1}})\cap V(\mathcal{I}^k_{u,v,\mathbf{a}})\implies F_\bullet\in\pi_k^{-1}(\mathcal{R}_{u[k],v[k],a_k}).
    \]
    Since $\mathbf{a}$ is flat, by \Cref{lemma:rotateRich}, 
    \[\mathcal{R}_{u[k-1],v[k-1],a_{k-1}} = \mathcal{R}_{u[k-1],v[k-1],a_{k}}.\]
    We wish to show that if $I\not\geq_{a_k}u[k]$ or $I\not\leq_{a_k}v[k]$ then $P_I$ vanishes on $F_\bullet$. 
    We first fix any $I\not\geq_{a_k}u[k]$ and consider the incidence Pl\"ucker relation in \eqref{eqn:incidenceplucker1} associated to $u[k-1]$ and $I$:
    \begin{equation}\label{eqn:Aug21hhh}
        P_{u[k-1]}P_I=\sum_{i\in I}P_{I-i}P_{u[k-1]+i}.
    \end{equation}
    We claim that the right hand side of \eqref{eqn:Aug21hhh} vanishes. There are two cases:
    \begin{enumerate}
        \item If $i\geq_{a_k}u_k$. Since $I\not\geq_{a_k}u[k]$, we have $I\setminus \{i\}\not\geq_{a_k}u[k-1]$. This implies that $P_{I-i}$ vanishes since $F_\bullet\in\pi_{k-1}^{-1}(\mathcal{R}_{u[k-1],v[k-1],a_{k-1}})$;
        \item If $i<_{a_k}u_k$. Then $(i,k)$ is a box in $D^\downarrow_\mathbf{a}(u)$ and $P_{u[k-1]+i}$ vanishes.
    \end{enumerate}
    Therefore the both sides of    \eqref{eqn:Aug21hhh} vanish. Since $F_\bullet\in u\Omega^\circ_{id}$, $P_{u[k-1]}(F_\bullet)\neq 0$ and thus $P_I$ vanishes on $F_\bullet$. The case where $I\nleq_{a_k} v[k]$ follows from the same reasoning.
\end{proof}
    
\begin{theorem}\label{thm:uv}
    $\mathcal{T}^\circ_{u,v}$ is equidimensional of dimension $\ell(u,v)$.
\end{theorem}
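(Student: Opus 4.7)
The plan is to combine a Krull-type lower bound with an explicit upper bound that works chart-by-chart. First, fix a flat sequence $\mathbf{a}$ (existence by \Cref{lemma:flat}); then \Cref{prop:vanishing} identifies
\[
\mathcal{T}^{\circ}_{u,v} = u\Omega_{id}^{\circ} \cap v\Omega_{id}^{\circ} \cap V(\mathcal{I}_{u,v,\mathbf{a}})
\]
inside the smooth open subvariety $u\Omega_{id}^{\circ} \cap v\Omega_{id}^{\circ} \subseteq u\Omega_{id}^{\circ} \cong \mathbb{A}^{\binom{n}{2}}$. By \Cref{lemma:equationcount}, the ideal $\mathcal{I}_{u,v,\mathbf{a}}$ is generated by exactly $\binom{n}{2} - \ell(u,v)$ elements, so Krull's Hauptidealsatz immediately gives $\dim \geq \ell(u,v)$ on every irreducible component of $\mathcal{T}^{\circ}_{u,v}$, conditional on nonemptiness (which will fall out of the upper-bound construction).

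For the matching upper bound, I would work in the $u$-chart, parametrized as in \Cref{ex:chart}. A Laplace expansion of $P_{u[k-1]+i}$ along the first $k-1$ columns, which form a permutation submatrix on the rows $u[k-1]$, shows that this Pl\"ucker coordinate equals $\pm$ the free entry at position $(i,k)$ whenever $i \notin u[k]$. Hence the Type (1) equations of \Cref{def:plucker=0}(1) cut $u\Omega_{id}^{\circ}$ down to an affine subspace $L_u$ of codimension $|D_{\mathbf{a}}^{\downarrow}(u)|$, and the symmetric computation in the $v$-chart defines $L_v \subseteq v\Omega_{id}^{\circ}$ of codimension $|D_{\mathbf{a}}^{\uparrow}(v)|$. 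In particular $\mathcal{T}^{\circ}_{u,v} \subseteq L_u \cap L_v$.

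It remains to show that the Type (2) equations, restricted to $L_u \cap v\Omega_{id}^{\circ}$, kill exactly $|D_{\mathbf{a}}^{\uparrow}(v)|$ further dimensions. My plan is a triangular elimination: process the boxes $(i,k) \in D_{\mathbf{a}}^{\uparrow}(v)$ in increasing column order and, for each, apply an incidence Pl\"ucker relation in the style of \eqref{eqn:incidenceplucker1} to rewrite $P_{v[k-1]+i}$ modulo the Type (1) relations. On the locus $P_{v[k-1]} \neq 0$, this yields a polynomial expression whose linear term in some still-free $u$-chart coordinate carries a nonzero coefficient, so each equation eliminates exactly one new coordinate. Chaining these eliminations produces a nonempty variety of dimension $\binom{n}{2} - |D_{\mathbf{a}}^{\downarrow}(u)| - |D_{\mathbf{a}}^{\uparrow}(v)| = \ell(u,v)$ on every component, matching the Krull lower bound and giving equidimensionality.

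The hard part will be the triangular elimination itself: showing that for every $(i,k) \in D_{\mathbf{a}}^{\uparrow}(v)$ processed in the chosen order, one can locate a still-free $u$-chart coordinate appearing linearly in $P_{v[k-1]+i}$ (mod Type (1)) with a coefficient that is a nonvanishing Pl\"ucker coordinate on $v\Omega_{id}^{\circ}$. This is a delicate combinatorial-geometric accounting tracking how the tilted Rothe diagrams $D_{\mathbf{a}}^{\downarrow}(u)$ and $D_{\mathbf{a}}^{\uparrow}(v)$ interact through the Pl\"ucker relations, in the same spirit as the inductive counting in the proof of \Cref{lemma:equationcount}.
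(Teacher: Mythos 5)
Your lower-bound argument is identical to the paper's: Krull's Hauptidealsatz applied to the $\binom{n}{2}-\ell(u,v)$ equations from \Cref{prop:vanishing} and \Cref{lemma:equationcount}. But the upper bound — which is the hard direction — is where your proposal departs from the paper and also where it has a genuine gap. The paper bounds $\dim\mathcal{T}^\circ_{u,v}$ from above by an inductive argument: if some component $Z$ had dimension $\geq \ell(u,v)+1$, then $\partial Z=\overline{Z}\cap V(P_uP_v)$ would either be a hypersurface section of $\overline{Z}$ inside $\partial\mathcal{T}_{u,v}=\bigsqcup_{[x,y]\subsetneq[u,v]}\mathcal{T}^\circ_{x,y}$ (too big, by \Cref{thm:Tunion} and induction on $\ell$), or $Z$ would be simultaneously projective and quasi-affine (hence a point). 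Your route instead tries to bound dimension by explicit triangular elimination of coordinates in the $u$-chart.

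The gap is that you yourself flag the core step as ``the hard part'' and do not carry it out. The claim that each Type (2) equation $P_{v[k-1]+i}=0$, after reducing mod the Type (1) equations, exposes a \emph{new, still-free} $u$-chart coordinate appearing \emph{linearly} with a leading coefficient that is a Pl\"ucker coordinate nonvanishing on $v\Omega_{id}^\circ$ is precisely the content that needs proof and is not at all obvious: expressed in $u$-chart coordinates, $P_{v[k-1]+i}$ is a $k\times k$ minor whose rows $v[k-1]+i$ have no a priori alignment with the pivot structure of the $u$-chart, so it is generally a high-degree polynomial with no visibly distinguished linear term. Absent an explicit combinatorial accounting (along the lines you gesture at, but do not supply), your upper bound is not established. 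Additionally, if the elimination did succeed globally on the chart as stated, it would parametrize an open subset of an affine space, proving \emph{irreducibility} of $\mathcal{T}^\circ_{u,v}$ — a substantially stronger conclusion than the equidimensionality claimed, which should itself make you suspicious that the triangular elimination cannot be quite so clean without further hypotheses or a more careful local analysis. In short: correct and matching on the lower bound, incomplete on the upper bound, and the missing step is not a routine verification.
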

\begin{proof}
    Since $\mathcal{T}^\circ_{u,v}$ is carved out by ${n\choose 2} - \ell(u,v)$ equations in Proposition~\ref{prop:vanishing}, by Krull's principal ideal theorem, every irreducible component of $\mathcal{T}^\circ_{u,v}$ has dimension at least $\ell(u,v)$.

    Now we need to prove $\dim(\mathcal{T}^\circ_{u,v})\leq\ell(u,v)$. If not, 
    let $Z\subseteq \mathcal{T}^\circ_{u,v}$ be an irreducible component with $\dim(Z)\geq \ell(u,v)+1$. Denote $\overline{Z}$ as the closure of $Z$ in $Fl_n$ and $\partial Z:=\overline{Z}\setminus Z$. Since $Z$ is closed in the chart $u\Omega_{id}^\circ \cap v\Omega_{id}^\circ$ (by Proposition~\ref{prop:vanishing}), we have $\partial Z=\overline{Z}\cap (Fl_n\setminus(u\Omega_{id}^\circ \cap v\Omega_{id}^\circ))=\overline{Z}\cap V(P_uP_v=0)$. There are two cases.
    \begin{enumerate}
        \item $\partial Z\neq \emptyset$. Since $\partial Z$ is the intersection of $\overline{Z}$ and a union of hyperplanes $V(P_uP_v=0)$, the dimensions of $\partial Z$ and $\overline{Z}$ differ by at most one: $\dim(\partial Z)\geq\dim(\overline{Z})-1\geq\ell(u,v)$. However, since $\partial Z\subseteq \partial\mathcal{T}_{u,v}(:=\mathcal{T}_{u,v}\setminus\mathcal{T}^\circ_{u,v})$ and $\partial\mathcal{T}_{u,v}=\sqcup_{[x,y]\subsetneq[u,v]}\mathcal{T}^\circ_{x,y}$by Theorem~\ref{thm:Tunion}, $\dim(\partial Z)\leq\ell(u,v)-1$ by induction on $\ell$, which is a contradiction.
        \item $\partial Z= \emptyset$. This implies that $Z=\overline{Z}$ is a projective variety. On the other hand, $Z\subseteq u\Omega_{id}^\circ \cap v\Omega_{id}^\circ$ is quasi-affine. 
        This cannot happen unless $Z=\{\text{pt}\}$, 
        contradicting $\dim(Z)\geq \ell(u,v)+1\geq 1$.
    \end{enumerate}
    Since we reach a contradiction in both cases, we conclude that $\dim(\mathcal{T}_{u,v}^\circ) = \ell(u,v)$.
\end{proof}



\subsection{Closure of $\mathcal{T}_{u,v}^\circ$}

Certainly, we expect $\overline{\mathcal{T}_{u,v}^\circ}=\mathcal{T}_{u,v}$ (\Cref{thm:closure}). One direction $\overline{\mathcal{T}_{u,v}^\circ}\subset\mathcal{T}_{u,v}$ is clear. For the other direction, by \Cref{thm:Tunion} and induction hypothesis, it suffices to show that $\mathcal{T}_{u,x}^{\circ}\subset\overline{\mathcal{T}_{u,v}^\circ}$ for $x\in[u,v]$ such that $\ell(u,x)=\ell(u,v)-1$, and dually, $\mathcal{T}_{y,v}^{\circ}\subset\overline{\mathcal{T}_{u,v}^\circ}$ for $y\in[u,v]$ such that $\ell(y,v)=\ell(u,v)-1$. We focus on such an $x$.

Throughout this section, let $x\in [u,v]$ such that $\ell(u,x) = \ell(u,v)-1$.
\begin{lemma}\label{lemma:uxOmega}
    $\mathcal{T}_{u,v}\cap u\Omega_{id}^\circ \cap x\Omega_{id}^\circ = \mathcal{T}_{u,x}^\circ \sqcup (\mathcal{T}_{u,v}^\circ \cap x\Omega_{id}^\circ)$.
\end{lemma}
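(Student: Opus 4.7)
The plan is to decompose $\mathcal{T}_{u,v}$ using the stratification of \Cref{thm:Tunion} and identify the surviving strata after intersecting with $u\Omega_{id}^\circ\cap x\Omega_{id}^\circ$. Concretely, I write
\[
\mathcal{T}_{u,v}\cap u\Omega_{id}^\circ\cap x\Omega_{id}^\circ = \bigsqcup_{[y,z]\subseteq[u,v]}\bigl(\mathcal{T}_{y,z}^\circ\cap u\Omega_{id}^\circ\cap x\Omega_{id}^\circ\bigr)
\]
and aim to show that the only nonempty terms occur at $(y,z)=(u,x)$ and $(y,z)=(u,v)$.

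For the first reduction, if the stratum $\mathcal{T}_{y,z}^\circ\cap u\Omega_{id}^\circ\cap x\Omega_{id}^\circ$ is nonempty then \Cref{prop:uOmega}(1) forces $u,x\in[y,z]$. Now fix any $\mathbf{a}$ with $u\leq_\mathbf{a}v$. By \Cref{cor:xy}, $y[k]\leq_{a_k}z[k]$ for all $k$, and applying \Cref{thm:tilted-criterion}((2)$\Rightarrow$(3)) to $y\in[u,v]$ yields $u[k]\leq_{a_k}y[k]$. Hence $u[k]\leq_{a_k}y[k]\leq_{a_k}z[k]$ for every $k$, so \Cref{thm:tilted-criterion}((4)$\Rightarrow$(2)) gives $y\in[u,z]$, i.e.\ $\ell(u,y)+\ell(y,z)=\ell(u,z)$. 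Combining with $\ell(y,u)+\ell(u,z)=\ell(y,z)$ from $u\in[y,z]$ and summing yields $\ell(u,y)+\ell(y,u)=0$, forcing $y=u$.

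With $y=u$ in hand, the stratum becomes $[u,z]\subseteq[u,v]$ with $x\in[u,z]$, giving $\ell(u,x)+\ell(x,z)=\ell(u,z)$ and $\ell(u,z)+\ell(z,v)=\ell(u,v)=\ell(u,x)+1$. Subtracting yields $\ell(x,z)+\ell(z,v)=1$, so $z=x$ or $z=v$. Therefore the only strata contributing to the intersection are $\mathcal{T}_{u,x}^\circ$ and $\mathcal{T}_{u,v}^\circ$.

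Finally, \Cref{prop:uOmega}(2) gives $\mathcal{T}_{u,x}^\circ\subseteq u\Omega_{id}^\circ\cap x\Omega_{id}^\circ$ and $\mathcal{T}_{u,v}^\circ\subseteq u\Omega_{id}^\circ$, so the two contributions are $\mathcal{T}_{u,x}^\circ$ and $\mathcal{T}_{u,v}^\circ\cap x\Omega_{id}^\circ$, respectively. These are disjoint: by \Cref{thm:altdefTplucker}, $\mathcal{T}_{u,x}\subseteq\{P_v=0\}$ since $v\notin[u,x]$, while $\mathcal{T}_{u,v}^\circ\subseteq\{P_v\neq 0\}$. The main obstacle is the length-additivity argument forcing $y=u$; it crucially exploits the asymmetry of the quantum Bruhat distance and leans on the chain $u\leq_\mathbf{a}y\leq_\mathbf{a}z$ supplied by \Cref{cor:xy} together with \Cref{thm:tilted-criterion}.
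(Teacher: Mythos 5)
Your proof is correct and takes essentially the same route as the paper: decompose $\mathcal{T}_{u,v}\cap u\Omega_{id}^\circ\cap x\Omega_{id}^\circ$ via the stratification of \Cref{thm:Tunion} and use \Cref{prop:uOmega} to see that only the strata indexed by $[u,x]$ and $[u,v]$ can meet both charts. The paper merely asserts this last fact, while you supply the interval combinatorics (via \Cref{thm:tilted-criterion} and \Cref{cor:xy}) forcing $y=u$ and $z\in\{x,v\}$ — a valid elaboration of the same argument.
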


\begin{proof}
    By Theorem~\ref{prop:uOmega}, the direction $\subseteq$ follows from the fact that there are only two strata in $\mathcal{T}_{u,v} = \bigsqcup_{[x,y]\subseteq [u,v]}\mathcal{T}_{x,y}^\circ$ that intersect both $u\Omega_{id}^\circ$ and $x\Omega_{id}^\circ$: $\mathcal{T}_{u,x}$ and $\mathcal{T}_{u,v}$. The other direction $\supseteq$ follows from $\mathcal{T}_{u,x}^\circ\subseteq u\Omega_{id}^\circ \cap x\Omega_{id}^\circ$ and $\mathcal{T}_{u,v}^\circ\subseteq u\Omega_{id}^\circ$ in Proposition~\ref{prop:uOmega}.
\end{proof}
The rest of this section is mainly devoted to controling the dimensions of this quasi-affine variety in \Cref{lemma:uxOmega} by finding explicit equations in said charts. 

\begin{defin}\label{def:uxplucker=0}
    For every pair of permutations $(u,v)$ and every choice of flat $\mathbf{a} = (a_1,\dots,a_{n-1})$ such that $u\leq_{\mathbf{a}}v$, let $x\in [u,v]$ such that $\ell(u,x) = \ell(u,v)-1$. Assume $x=vt_{pq}$ for $p<q$. Define $\mathcal{I}_{u,v,\mathbf{a},x}$ as the following set of equations:
\begin{enumerate}
    \item for every box $(i,k)\in D_{\mathbf{a}}^{\downarrow}(u)$, set $P_{u[k-1]+i}=0$,
    \item for every box $(i,k)\in D_{\mathbf{a}}^{\uparrow}(v)$ that is also in $D_{\mathbf{a}}^{\uparrow}(x)$, set $P_{x[k-1]+i}=0$,
    \item for every box $(i,k) \in D_{\mathbf{a}}^{\uparrow}(v)$ that is not in $D_{\mathbf{a}}^{\uparrow}(x)$ there are two cases:
    \begin{enumerate}
        \item $i=x_p$ and $k\in(p,q)$, set $P_{x[k-1]+x_q}=0$,
        \item $k=q$ and $i\in(x_p,x_q)_c$, set $P_{x[q-1]+i}P_{x[p-1]+x_q}-P_{x[p-1]+i}P_{x[q-1]+x_q} = 0$.
    \end{enumerate}
\end{enumerate}
\end{defin}

\begin{ex}
    We demonstrate equations of type $(2)$ and $(3)$ in \Cref{def:uxplucker=0} using $u,v,x,\mathbf{a}$ as in \Cref{ex:vx}. Here, $p = 1$ and $q = 5$. On $x\Omega_{id}^\circ$, equations from $(2)$ correspond to having $0$'s in the green shaded boxes in \Cref{fig:xOmega}. Equations from $(3a)$ and $(3b)$ correspond to having $0$ in position $(4,4)$ and vanishing of the $2\times 2$ minor in blue brackets in \Cref{fig:xOmega}.
    \begin{figure}[h!]
    \centering
    \subcaptionbox{$D_{\mathbf{a}}^{\uparrow}(v)$ \label{fig:D_v}}[.4\textwidth]{
        \begin{tikzpicture}[scale = 0.55]
    \fill [green, opacity  = 0.25] (0,2) rectangle (1,0);
    \fill [green, opacity  = 0.25] (0,6) rectangle (3,5);
    \fill [green, opacity  = 0.25] (3,4) rectangle (4,3);
    \draw[pattern = north west lines, pattern color = red, opacity = 0.5] (3,5) rectangle (4,4);
    \draw[pattern = north east lines, pattern color = blue, opacity = 0.5] (4,3) rectangle (5,4);
    \draw (0,0)--(6,0)--(6,6)--(0,6)--(0,0);
    \draw (1,0) -- (1,6);
    \draw (2,0) -- (2,6);
    \draw (3,0) -- (3,6);
    \draw (4,0) -- (4,6);
    \draw (5,0) -- (5,6);
    \draw (0,1) -- (6,1);
    \draw (0,2) -- (6,2);
    \draw (0,3) -- (6,3);
    \draw (0,4) -- (6,4);
    \draw (0,5) -- (6,5);
    \node at (-0.5,0.5) {$6$};
    \node at (-0.5,1.5) {$5$};
    \node at (-0.5,2.5) {$4$};
    \node at (-0.5,3.5) {$3$};
    \node at (-0.5,4.5) {$2$};
    \node at (-0.5,5.5) {$1$};
    \node at (0.5,-0.5) {$1$};
    \node at (1.5,-0.5) {$2$};
    \node at (2.5,-0.5) {$3$};
    \node at (3.5,-0.5) {$4$};
    \node at (4.5,-0.5) {$5$};
    \node at (5.5,-0.5) {$6$};
    \node at (0.5,2.5) {$\bullet$};
    \node at (1.5,0.5) {$\bullet$};
    \node at (2.5,1.5) {$\bullet$};
    \node at (3.5,5.5) {$\bullet$};
    \node at (4.5,4.5) {$\bullet$};
    \node at (5.5,3.5) {$\bullet$};
    \draw[line width = 0.35mm] (0.5,5) -- (0.5,2.5) -- (6,2.5);
    \draw[line width = 0.35mm] (1.5,5) -- (1.5,0.5) -- (6,0.5);
    \draw[line width = 0.35mm] (2.5,5) -- (2.5,1.5) -- (6,1.5);
    \draw[line width = 0.35mm] (3.5,6) -- (3.5,5.5) -- (6,5.5);
    \draw[line width = 0.35mm] (3.5,0) -- (3.5,1);
    \draw[line width = 0.35mm] (4.5,6) -- (4.5,4.5) -- (6,4.5);
    \draw[line width = 0.35mm] (5.5,3.5) -- (6,3.5);
    \draw[line width = 0.35mm] (4.5,0) -- (4.5,1);
    \draw[line width=0.55mm, red] (0,5) -- (3,5);
    \draw[line width=0.55mm, red] (3,1) -- (5,1);
    \end{tikzpicture}}
        \hspace{-2cm}\subcaptionbox{$D_{\mathbf{a}}^{\uparrow}(x)$ \label{fig:D_x}}[.4\textwidth]{
            \begin{tikzpicture}[scale = 0.55]
    \fill [green, opacity  = 0.25] (0,4) rectangle (1,0);
    \fill [green, opacity  = 0.25] (0,6) rectangle (3,5);
    \fill [green, opacity  = 0.25] (3,4) rectangle (4,2);
    \draw (0,0)--(6,0)--(6,6)--(0,6)--(0,0);
    \draw (1,0) -- (1,6);
    \draw (2,0) -- (2,6);
    \draw (3,0) -- (3,6);
    \draw (4,0) -- (4,6);
    \draw (5,0) -- (5,6);
    \draw (0,1) -- (6,1);
    \draw (0,2) -- (6,2);
    \draw (0,3) -- (6,3);
    \draw (0,4) -- (6,4);
    \draw (0,5) -- (6,5);
    \node at (-0.5,0.5) {$6$};
    \node at (-0.5,1.5) {$5$};
    \node at (-0.5,2.5) {$4$};
    \node at (-0.5,3.5) {$3$};
    \node at (-0.5,4.5) {$2$};
    \node at (-0.5,5.5) {$1$};
    \node at (0.5,-0.5) {$1$};
    \node at (1.5,-0.5) {$2$};
    \node at (2.5,-0.5) {$3$};
    \node at (3.5,-0.5) {$4$};
    \node at (4.5,-0.5) {$5$};
    \node at (5.5,-0.5) {$6$};
    \node at (0.5,4.5) {$\bullet$};
    \node at (1.5,0.5) {$\bullet$};
    \node at (2.5,1.5) {$\bullet$};
    \node at (3.5,5.5) {$\bullet$};
    \node at (4.5,2.5) {$\bullet$};
    \node at (5.5,3.5) {$\bullet$};
    \draw[line width = 0.35mm] (0.5,5) -- (0.5,4.5) -- (6,4.5);
    \draw[line width = 0.35mm] (1.5,5) -- (1.5,0.5) -- (6,0.5);
    \draw[line width = 0.35mm] (2.5,5) -- (2.5,1.5) -- (6,1.5);
    \draw[line width = 0.35mm] (3.5,6) -- (3.5,5.5) -- (6,5.5);
    \draw[line width = 0.35mm] (3.5,0) -- (3.5,1);
    \draw[line width = 0.35mm] (4.5,6) -- (4.5,2.5) -- (6,2.5);
    \draw[line width = 0.35mm] (5.5,3.5) -- (6,3.5);
    \draw[line width = 0.35mm] (4.5,0) -- (4.5,1);
    \draw[line width=0.55mm, red] (0,5) -- (3,5);
    \draw[line width=0.55mm, red] (3,1) -- (5,1);
    \end{tikzpicture}}
    \hspace{-2cm}\subcaptionbox{$x\Omega_{id}^\circ$\label{fig:xOmega}}[.4\textwidth]{
        \begin{tikzpicture}[scale = 0.55]
            \draw (0,0)--(6,0)--(6,6)--(0,6)--(0,0);
    \draw (1,0) -- (1,6);
    \draw (2,0) -- (2,6);
    \draw (3,0) -- (3,6);
    \draw (4,0) -- (4,6);
    \draw (5,0) -- (5,6);
    \draw (0,1) -- (6,1);
    \draw (0,2) -- (6,2);
    \draw (0,3) -- (6,3);
    \draw (0,4) -- (6,4);
    \draw (0,5) -- (6,5);
    \node at (-0.5,0.5) {$6$};
    \node at (-0.5,1.5) {$5$};
    \node at (-0.5,2.5) {$4$};
    \node at (-0.5,3.5) {$3$};
    \node at (-0.5,4.5) {$2$};
    \node at (-0.5,5.5) {$1$};
    \node at (0.5,-0.5) {$1$};
    \node at (1.5,-0.5) {$2$};
    \node at (2.5,-0.5) {$3$};
    \node at (3.5,-0.5) {$4$};
    \node at (4.5,-0.5) {$5$};
    \node at (5.5,-0.5) {$6$};
    \node at (0.5,4.5) {$1$};
    \node at (1.5,0.5) {$1$};
    \node at (2.5,1.5) {$1$};
    \node at (3.5,5.5) {$1$};
    \node at (4.5,2.5) {$1$};
    \node at (5.5,3.5) {$1$};
    \node at (1.5,4.5) {$0$};
    \node at (2.5,4.5) {$0$};
    \node at (3.5,4.5) {$0$};
    \node at (4.5,4.5) {$0$};
    \node at (5.5,4.5) {$0$};
    \node at (2.5,0.5) {$0$};
    \node at (3.5,0.5) {$0$};
    \node at (4.5,0.5) {$0$};
    \node at (5.5,0.5) {$0$};
    \node at (3.5,1.5) {$0$};
    \node at (4.5,1.5) {$0$};
    \node at (4.5,5.5) {$0$};
    \node at (5.5,1.5) {$0$};
    \node at (5.5,2.5) {$0$};
    \node at (5.5,5.5) {$0$};

    \node at (0.5,0.5) {$0$};
    \node at (0.5,1.5) {$0$};
    \node at (0.5,2.5) {$a$};
    \node at (0.5,3.5) {$ab$};
    \node at (0.5,5.5) {$0$};
    \node at (1.5,1.5) {$*$};
    \node at (1.5,2.5) {$*$};
    \node at (1.5,3.5) {$*$};
    \node at (1.5,5.5) {$0$};
    \node at (2.5,2.5) {$*$};
    \node at (2.5,3.5) {$*$};
    \node at (2.5,5.5) {$0$};
    \node at (3.5,2.5) {$0$};
    \node at (3.5,3.5) {$0$};
    \node at (4.5,3.5) {$b$};
    \draw[line width=0.55mm, blue] (0,2) -- (1,2) -- (1,4) -- (0,4) -- (0,2);
    \draw[line width=0.55mm, blue] (4,2) -- (5,2) -- (5,4) -- (4,4) -- (4,2);
    \draw[pattern = north west lines, pattern color = red, opacity = 0.5] (3,3) rectangle (4,2);
    \fill [green, opacity  = 0.25] (0,2) rectangle (1,0);
    \fill [green, opacity  = 0.25] (0,6) rectangle (3,5);
    \fill [green, opacity  = 0.25] (3,4) rectangle (4,3);
        \end{tikzpicture}
        }
    \end{figure}
\end{ex}

\vspace{-0.35cm}

We need the following technical lemma and its consequence to prove Proposition~\ref{prop:uxvanishing}.

\begin{lemma}\label{lemma:indplucker}
    For any $I\in {[n]\choose a}, J\in {[n]\choose b}$ with $1\leq b < a < n$, if there exists $k\in (b,a)$ and $K_1,K_2\in {[n]\choose k}$ such that 
    $F_\bullet \in \mathcal{R}_{K_1,K_2,r} = \mathcal{R}_{K_1,K_2,r'}$ for some $r,r'\in [n]$, then
    \begin{equation}\label{eqn:indplucker}
        \sum_{i\in I \cap [r,r')_c} P_{I- i}P_{J+ i}(F_\bullet) = 0. \tag{$*$}
    \end{equation}
\end{lemma}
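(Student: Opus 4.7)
My plan is to leverage the direct-sum decomposition of $F_k$ implied by Lemma~\ref{lemma:rotateRich}. Under the hypothesis, the row-span splitting is equivalent to
\[F_k = V_1 \oplus V_2, \qquad V_\alpha := F_k \cap \mathbb{C}^{S_\alpha} \subset \mathbb{C}^{S_\alpha},\]
where $S_1 := [r,r')_c$ and $S_2 := [r',r)_c$. Setting $k_\alpha := \dim V_\alpha$ (so $k_1+k_2=k$), a block-diagonal determinant calculation shows that for $|K|=k$, the Plücker coordinate $P_K(F_\bullet) = P_K(F_k)$ vanishes whenever $|K\cap S_1| \neq k_1$, and otherwise factors as $\pm P_{K\cap S_1}(V_1)\cdot P_{K\cap S_2}(V_2)$. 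This family of vanishings and factorizations is the main algebraic input.

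First, I would apply the Case~II incidence Plücker relation \eqref{eqn:incidenceplucker2} to $I$ and an auxiliary set $L \in \binom{[n]}{k-1}$ of the form $L = L_1 \sqcup L_2$ with $L_\alpha \subset S_\alpha$ and $|L_1|=k_1-1,\ |L_2|=k_2$. The identity $\sum_{i\in I}P_{I-i}(F_\bullet)\,P_{L+i}(F_\bullet)=0$ then collapses under the $F_k$-vanishings to a sum over only $i \in I\cap S_1$, since for $i \in I\cap S_2$ one has $|(L+i)\cap S_1| = k_1-1 \neq k_1$. Dividing out the nonzero scalar $P_{L_2}(V_2)$ (which can be arranged by choosing $L_2$ to realize any given nonzero Plücker of $V_2$) yields a family of identities
\[\sum_{i \in I\cap S_1} P_{I-i}(F_\bullet)\,P_{L_1+i}(V_1) \;=\; 0 \qquad \text{for every } L_1 \in \binom{S_1}{k_1-1}.\]

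The remaining step is to express $P_{J+i}(F_\bullet)=P_{J+i}(F_{b+1})$ as a linear combination of the $P_{L_1+i}(V_1)$'s modulo the $F_k$-vanishings. When $b+1=k$, this is immediate: the factorization gives $P_{J+i}(F_k) = \pm P_{(J+i)\cap S_1}(V_1)\cdot P_{J\cap S_2}(V_2)$, and the partial sum is trivially zero unless $|J\cap S_1|=k_1-1$, in which case the conclusion follows directly from the identity above with $L_1 = J\cap S_1$. When $b+1<k$, I would chain further incidence Plücker relations through intermediate levels to telescope $P_{J+i}(F_{b+1})$ up to level $k$, an induction on $k-b$ with the $b+1=k$ case as base being the natural framework.

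The main obstacle I anticipate is the bookkeeping in the telescoping step when $b+1<k$: each chained relation generates cross terms that must either vanish individually via the $F_k$-vanishings or cancel pairwise, all while preserving the ``partial sum over $i \in I\cap S_1$'' structure. Sign conventions from the Plücker index reorderings add a further layer of care that must be tracked throughout the argument.
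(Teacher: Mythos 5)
Your setup is sound: the splitting $F_k=V_1\oplus V_2$ with $V_\alpha=F_k\cap\mathbb{C}^{S_\alpha}$ is indeed equivalent to the row-span statement in Lemma~\ref{lemma:rotateRich}, and the resulting vanishing pattern (that $P_{K'}(F_k)\neq 0$ forces $\#(K'\cap[r,r')_c)=k_1$) is exactly the input the paper also uses. With it, your base case $k=b+1$ is correct: the relation \eqref{eqn:incidenceplucker2} for $I,J$ (valid since $a\geq b+2$) collapses to the partial sum, or every term of the partial sum vanishes, depending on $\#(J\cap S_1)$. But the lemma's actual content is the case $b+1<k$, and there your argument stops at a plan. ``Telescoping $P_{J+i}(F_{b+1})$ up to level $k$ through intermediate levels'' would introduce Pl\"ucker coordinates of $F_{b+2},\dots,F_{k-1}$, about which the hypothesis says nothing: the only vanishing available is at level $k$, so the cross terms you hope will ``vanish individually via the $F_k$-vanishings or cancel pairwise'' have no identified reason to do either. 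This transfer from level $b+1$ to level $k$ is the heart of the lemma and is missing. For comparison, the paper does it in one step with no intermediate levels: choose $K\in\binom{[n]}{k}$ with $P_K(F_\bullet)\neq 0$, multiply the partial sum by $P_K$, expand each $P_{J+i}P_K$ via \eqref{eqn:incidenceplucker3} as $-\sum_{j\in K}P_{K-j+i}P_{J+j}$, use the level-$k$ vanishing pattern to force $j\in[r,r')_c$ and then to drop the restriction $i\in[r,r')_c$, and finally kill the full sums $\sum_{i\in I}P_{I-i}P_{(K-j)+i}$ by \eqref{eqn:incidenceplucker2}, using $a-(k-1)\geq 2$. (A completion closer to your decomposition also exists, but at the level of vectors rather than Pl\"ucker coordinates of $V_1$: each $g\in F_{b+1}$ has $V_1$-component $g^{(1)}\in F_k\subseteq F_{a-1}$, whence $\sum_{i\in I\cap S_1}P_{I-i}(F_{a-1})\,g_i=\sum_{i\in I}P_{I-i}(F_{a-1})\,g^{(1)}_i=0$; you would then still have to expand $P_{J+i}(F_{b+1})$ along ``row $i$'' with signs. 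You did not carry out either route.)

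A secondary issue: the sign in your factorization $P_{L+i}(F_k)=\pm P_{L_1+i}(V_1)P_{L_2}(V_2)$ is not independent of $i$ in general. When $[r,r')_c$ wraps around $n$, the parity of $\#\{y\in L_2:\,y>i\}$ changes as $i$ moves between the two linear pieces of $S_1$, so ``dividing out $P_{L_2}(V_2)$'' yields a signed sum, not the clean identity $\sum_{i\in I\cap S_1}P_{I-i}P_{L_1+i}(V_1)=0$ you state. This is repairable (for instance, since the full sum over $i\in I$ vanishes, the claim for $[r,r')_c$ is equivalent to that for $[r',r)_c$, and one of the two intervals does not wrap), but it must be addressed; as written, the ``family of identities'' step is not established, and in the base case the detour through $V_1,V_2$ is in any event unnecessary.
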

\begin{proof}
    Fix any $K\in {[n]\choose k}$ such that $P_K(F_\bullet) \neq 0$. Then Equation~\eqref{eqn:indplucker} is equivalent to 
    \[\sum_{i\in I \cap [r,r')_c} P_{I- i}P_{J+ i}P_K(F_\bullet) = 0.\]
    By expanding $P_{J+ i}P_K$ using \eqref{eqn:incidenceplucker3}, this is equivalent to
    \begin{equation}\label{eqn:Aug26aaa}
        \sum_{i\in I,i\in [r,r')_c, j\in K}P_{I-i}P_{J+j}P_{K-j+i}(F_\bullet) = 0.
    \end{equation}
    Since $K_1\leq_{r} K_2$ and $K_1\leq_{r'} K_2$, $P_{K'}(F_\bullet)\neq 0$ only if $\#(K'\cap [r,r')_c) = \#(K_1\cap [r,r')_c)$ for any $K'\in {[n]\choose k}$. Thus $P_{K-j+i}(F_\bullet)\neq 0$ only if $j\in [r,r')_c$. Equation~\eqref{eqn:Aug26aaa} is then equivalent to 
    \[\sum_{i\in I\cap [r,r')_c, j\in K\cap[r,r')_c}P_{I-i}P_{J+j}P_{K-j+i}(F_\bullet) = 0.\]
    By the same reasoning, this is equivalent to 
    \begin{equation}\label{eqn:Aug26bbb}
        \sum_{i\in I, j\in K\cap[r,r')_c}P_{I-i}P_{J+j}P_{K-j+i}(F_\bullet) = 0.
    \end{equation}
    Since $\sum_{i\in I}P_{I-i}P_{K-j+i} = 0$ for $j\in K\cap[r,r')_c$ by \eqref{eqn:incidenceplucker2}, \Cref{eqn:Aug26bbb} and thus \eqref{eqn:indplucker} holds. 
\end{proof}

\begin{cor}\label{cor:induxplucker}
    Given 
    $u\leq_\mathbf{a} v$, for any $I\in {[n]\choose q}, J\in {[n]\choose p-1}$ with $1\leq p < q < n$, if flag $F_\bullet$ satisfies $F_\bullet \in\pi_{k}^{-1}(\mathcal{R}_{u[k],v[k],a_k})$ for every $p\leq k<q$, then
    \begin{equation}\label{eqn:induxplucker}
        \sum_{\substack{i\in I\cap [a_p,a_q)_c}}P_{I- i}P_{J+ i}(F_\bullet) = 0. \tag{$*$}
    \end{equation}
\end{cor}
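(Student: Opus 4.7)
The plan is to apply Lemma~\ref{lemma:indplucker} in a telescoping manner. The key preliminary observation is that by Lemma~\ref{lemma:rotateRich} (equivalently, Corollary~\ref{cor:independent-a}), the hypothesis $F_\bullet \in \pi_k^{-1}(\mathcal{R}_{u[k], v[k], a_k})$ depends only on the pair $(u[k], v[k])$: for any $a'$ with $u[k] \leq_{a'} v[k]$ we automatically have $F_\bullet \in \pi_k^{-1}(\mathcal{R}_{u[k], v[k], a'})$. Hence we are free to replace $\mathbf{a}$ by any other sequence that satisfies $u \leq_{\mathbf{a}} v$ at those coordinates; in particular, we may assume $\mathbf{a}$ is flat (Lemma~\ref{lemma:flat}), leaving $a_p$ and $a_q$ fixed.

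I would then proceed by induction on $q-p$. For the base case $q = p+1$, I would invoke Lemma~\ref{lemma:indplucker} with $k = p$, $K_1 = u[p]$, $K_2 = v[p]$, $r = a_p$, and $r' = a_{p+1}$. The required equality $\mathcal{R}_{u[p], v[p], a_p} = \mathcal{R}_{u[p], v[p], a_{p+1}}$ follows from $u[p] \leq_{a_p} v[p]$ (given) and $u[p] \leq_{a_{p+1}} v[p]$ (supplied by flatness: flat $\mathbf{a}$ demands $u[p] \leq_{a_{p+1}} v[p]$), combined with Lemma~\ref{lemma:rotateRich}. For the inductive step, I would split the cyclic arc at $a_{q-1}$, writing
\[
\sum_{i \in I \cap [a_p, a_q)_c} P_{I-i} P_{J+i} \;=\; \sum_{i \in I \cap [a_p, a_{q-1})_c} P_{I-i} P_{J+i} \;+\; \sum_{i \in I \cap [a_{q-1}, a_q)_c} P_{I-i} P_{J+i},
\]
where the first sum vanishes by the inductive hypothesis (applied to $p$ and $q-1$, using the size-$(q-1)$ subset $I' \subset I$ supplied by removing a single index) and the second by the base case applied to $(q-1, q)$. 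The identity here must be combined with the relation $P_{I}P_{J} = \sum_{i \in I} P_{I - i} P_{J + i}$ of Equation~\eqref{eqn:incidenceplucker1} to reduce the $q$-subset case to smaller subsets for which the induction already applies.

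The main obstacle is the cyclic structure of the decomposition: the set identity $[a_p, a_q)_c = [a_p, a_{q-1})_c \sqcup [a_{q-1}, a_q)_c$ only holds when $a_{q-1}$ lies on the cyclic arc from $a_p$ to $a_q$. When this cyclic condition fails, the telescoping collects each $i \in I$ with an unwanted multiplicity, and I would remedy this using the closure relation $\sum_{i \in I} P_{I-i}P_{J+i}(F_\bullet) = 0$ from Equation~\eqref{eqn:incidenceplucker2} (valid because $|I| - |J| = q - p + 1 \geq 2$). This ``full-cycle'' identity converts a sum over an arc into the negative of the sum over its complement, allowing the telescoping to close up correctly regardless of how the intermediate $a_k$'s are arranged cyclically with respect to $a_p$ and $a_q$.
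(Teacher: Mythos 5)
Your high-level plan matches the paper's: pass to a flat sequence, apply Lemma~\ref{lemma:indplucker} to consecutive pairs $(a_k, a_{k+1})$, and handle the cyclic wraparound using the full-cycle identity $\sum_{i\in I}P_{I-i}P_{J+i}(F_\bullet)=0$ from \eqref{eqn:incidenceplucker2}. Those are exactly the three ingredients the paper uses. The paper, however, does no induction at all: it fixes $I\in\binom{[n]}{q}$, $J\in\binom{[n]}{p-1}$ once and for all, invokes Lemma~\ref{lemma:indplucker} for each boundary pair $(a_p,a_p')$, $(a_q',a_q)$ and each intermediate flat pair $(a_k',a_{k+1}')$ with $p\le k<q$ --- always with the same $I$ and $J$, only $k$, $K_1=u[k]$, $K_2=v[k]$, and the anchor pair $(r,r')$ change --- and then sums the resulting equations, subtracting $\sum_{i\in I}P_{I-i}P_{J+i}=0$ as many times as the concatenated arcs wind around the circle.

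The inductive reframing you propose has a genuine gap. The corollary statement ties $|I|=q$ and $|J|=p-1$ to the indices $p,q$. In your inductive step you want the first term $\sum_{i\in I\cap[a_p,a_{q-1})_c}P_{I-i}P_{J+i}$ to vanish ``by the inductive hypothesis applied to $p$ and $q-1$.'' But that inductive hypothesis speaks about subsets $I'$ of size $q-1$, producing $\sum_{i\in I'\cap[a_p,a_{q-1})_c}P_{I'-i}P_{J+i}=0$, which is a different sum involving Pl\"ucker coordinates of a different degree. Similarly, the second term would be handled by a ``base case at $(q-1,q)$'' requiring $|J|=q-2$, not $p-1$. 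You notice this and gesture at repairing it via the Pl\"ucker relation $P_IP_J=\sum_{i\in I}P_{I-i}P_{J+i}$, but that relation does not convert a degree-$q$ statement into a degree-$(q-1)$ one in the required way; the fix is left genuinely unresolved. The cleaner formulation, if one wants an induction, is to induct on the stopping index $m\in[p,q]$ for a \emph{fixed} $I,J$, proving $\sum_{i\in I\cap[a_p,a_m)_c}P_{I-i}P_{J+i}(F_\bullet)=0$; at each step you still invoke Lemma~\ref{lemma:indplucker} with the original $I,J$ and only vary $k=m$. Note also that Lemma~\ref{lemma:indplucker} needs $k\in(|J|,|I|)$, and this range equals $\{p,\ldots,q-1\}$ precisely because $|I|=q$, $|J|=p-1$; once you shrink $I$, that range changes and some of the needed $k$ fall out of it.

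One smaller point: ``leaving $a_p$ and $a_q$ fixed'' while replacing the rest of $\mathbf{a}$ by a flat sequence is in tension with the definition of flatness, which constrains $a_p$ (via $u[p-1]\leq_{a_p}v[p-1]$) and $a_q$. The paper avoids this by introducing an auxiliary flat $\mathbf{a}'$ \emph{alongside} the given $\mathbf{a}$ and applying Lemma~\ref{lemma:indplucker} across the boundary pairs $(a_p,a_p')$ and $(a_q',a_q)$; you would want to do the same rather than modify $\mathbf{a}$ in place.
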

\begin{proof}
    By \Cref{lemma:flat}, there is a flat $\mathbf{a'}=(a_1',\dots,a_{n-1}')$ such that $u\leq_{\mathbf{a'}}v$. By \Cref{lemma:indplucker}, 
    \[\sum_{\substack{i\in I\cap [a_p,a_p')_c}}P_{I- i}P_{J+ i}(F_\bullet) = 0\text{ and }\sum_{\substack{i\in I\cap [a_q',a_q)_c}}P_{I- i}P_{J+ i}(F_\bullet) = 0.\]
    Moreover, for every $p\leq k<q$, by flatness of $\mathbf{a}'$:
    \begin{equation*}
        \sum_{\substack{i\in I\cap [a_k',a_{k+1}')_c}}P_{I- i}P_{J+ i}(F_\bullet) = 0;
    \end{equation*}
    By summing the above equations and subtracting $\sum_{i\in I}P_{I-i}P_{J+i}(F_\bullet) = 0$ appropriate times, we obtain \eqref{eqn:induxplucker}.
\end{proof}

\begin{prop}\label{prop:uxvanishing}
    For any flat $\mathbf{a}$, $\mathcal{T}_{u,v}\cap u\Omega_{id}^\circ \cap x\Omega_{id}^\circ=u\Omega_{id}^\circ \cap x\Omega_{id}^\circ\cap V(\mathcal{I}_{u,v,\mathbf{a},x})$. 
\end{prop}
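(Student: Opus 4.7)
The plan is to mirror the proof of Proposition~\ref{prop:vanishing}, now working on the chart $u\Omega_{id}^\circ \cap x\Omega_{id}^\circ$. For the containment $\subseteq$, I will check that each equation in $\mathcal{I}_{u,v,\mathbf{a},x}$ vanishes on $\mathcal{T}_{u,v}$. Type (1) equations are handled exactly as in Proposition~\ref{prop:vanishing}: $(i,k) \in D_{\mathbf{a}}^{\downarrow}(u)$ forces $u[k-1]+i <_{a_k} u[k]$, so $P_{u[k-1]+i}$ vanishes by Proposition~\ref{prop:RIJadef}. For type (2) equations, given $(i,k) \in D_{\mathbf{a}}^{\uparrow}(v) \cap D_{\mathbf{a}}^{\uparrow}(x)$, the aim is to show $x[k-1]+i \notin [u[k],v[k]]_{a_k}$; outside $k \in [p,q]$ this is immediate since $x[k-1] = v[k-1]$ and $(i,k)\in D_{\mathbf{a}}^\uparrow(v)$ already gives $v[k-1]+i >_{a_k} v[k]$, and inside this range it requires comparing the two swapped elements $v_p,v_q$ with $v_k$ and $i$, controlled by the flatness of $\mathbf{a}$ together with the hypotheses $i >_{a_k} v_k$ and $i >_{a_k} x_k$.

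The more interesting verifications are the type (3) equations. For (3a), the identity $x[k-1]+x_q = v[k-1] \cup \{v_q\}$ (using $x_q = v_p$ and $x[k-1] = v[k-1] \setminus \{v_p\} \cup \{v_q\}$) together with $v_q = x_p >_{a_k} v_k$ gives $x[k-1]+x_q >_{a_k} v[k]$, yielding vanishing of $P_{x[k-1]+x_q}$ via Proposition~\ref{prop:RIJadef}. For (3b), I rewrite the relation using $x[q-1]+x_q = x[q]$ and $x[p-1]+x_q = v[p]$ as
\[
P_{x[q-1]+i}\,P_{v[p]} - P_{x[p-1]+i}\,P_{x[q]} = 0,
\]
and derive it by applying Lemma~\ref{lemma:indplucker} (or Corollary~\ref{cor:induxplucker}) with $I = x[q]+i$ of size $q+1$ and $J = x[p-1]$ of size $p-1$, choosing the cyclic interval so that all terms in the resulting sum collapse to the two indexed by $j = i$ and $j = x_q$, the sign convention of Pl\"ucker coordinates producing the required minus. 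The remaining terms in the expansion vanish either by already-established type (1) or type (2) equations, or because the index $j$ falls outside the relevant cyclic interval.

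For the containment $\supseteq$, I will induct on $k$ to show that any $F_\bullet \in u\Omega_{id}^\circ \cap x\Omega_{id}^\circ \cap V(\mathcal{I}_{u,v,\mathbf{a},x})$ satisfies $F_\bullet \in \pi_k^{-1}(\mathcal{R}_{u[k],v[k],a_k})$ for every $k \in [n-1]$. The inductive step follows the template of Proposition~\ref{prop:vanishing}: given $I \notin [u[k],v[k]]_{a_k}$, I expand $P_{u[k-1]}\,P_I$ (or $P_{x[k-1]}\,P_I$, as needed) via the incidence Pl\"ucker relation \eqref{eqn:incidenceplucker1} and use the explicit vanishings prescribed by $\mathcal{I}_{u,v,\mathbf{a},x}$ together with nonvanishing of $P_{u[k-1]}$ and $P_{x[k-1]}$ on the chart. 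The main obstacle will be the middle range $k \in [p,q]$, where the expansion using $x[k-1]$ produces extra terms corresponding to indices in the cyclic interval between $x_p$ and $x_q$; at $k=q$ these extra terms are exactly the ones controlled by the quadratic type (3b) relation, which must be invoked carefully (together with the type (3a) vanishings at columns $k\in(p,q)$) to close the induction.
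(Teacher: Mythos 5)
Your containment $\subseteq$ is essentially the paper's argument: types (1), (2), (3a) follow from Proposition~\ref{prop:RIJadef} exactly as you indicate, and for (3b) your setup with $I=x[q]+i$, $J=x[p-1]$ is the same incidence relation the paper expands. One caution there: the middle terms $P_{x[p-1]+v_m}P_{v[q]-v_m+i}$ with $m\in(p,q)$ and $v_m\in[a_p,a_q)_c$ do \emph{not} vanish individually, so the ``collapse'' only works if the cyclic interval is taken to be the complement of $[a_p,a_q)_c$, and the vanishing of that complementary partial sum is not a single application of Lemma~\ref{lemma:indplucker} but rather \eqref{eqn:incidenceplucker2} combined with Corollary~\ref{cor:induxplucker} over $[a_p,a_q)_c$ (which is legitimate, and is exactly how the paper organizes this step). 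So this half is completable and matches the paper in substance.

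The genuine gap is in the containment $\supseteq$ at column $k=q$, which you defer to ``invoking (3b) carefully.'' The natural way to use (3b) is to multiply the surviving sum $\sum_{i\in K\cap(x_p,x_q]_c}P_{x[q-1]+i}P_{K-i}$ by $P_{x[p-1]+x_q}=\pm P_{v[p]}$ and swap; but $P_{v[p]}$ is not one of the chart coordinates $P_{u[k]},P_{x[k]}$, so it can vanish on $u\Omega_{id}^\circ\cap x\Omega_{id}^\circ\cap V(\mathcal{I}_{u,v,\mathbf{a},x})$ (for instance at points of $\mathcal{T}^\circ_{u,x}$, which this set must contain by Lemma~\ref{lemma:uxOmega}), and then this route yields nothing. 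The paper handles this by a case split you do not anticipate: when $P_{x[p-1]+x_q}(F_\bullet)=0$, the quadratic relations degenerate to $P_{x[p-1]+i}(F_\bullet)=0$ for all $i\in(x_p,x_q)_c$, which are precisely the equations of $\mathcal{I}_{u,x,\mathbf{a}}\setminus\mathcal{I}_{u,v,\mathbf{a},x}$; hence $F_\bullet\in\mathcal{T}^\circ_{u,x}$ by Proposition~\ref{prop:vanishing} applied to the pair $(u,x)$ (with the same $\mathbf{a}$, still flat by Lemma~\ref{lemma:flat}), and membership in $\mathcal{T}_{u,v}$ follows from Lemma~\ref{lemma:uxOmega} --- i.e.\ one abandons the level-by-level induction and proves the containment directly. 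Moreover, even in the nondegenerate case, ``type (3a) vanishings'' are not what closes the argument: after the swap one still needs \eqref{eqn:incidenceplucker2}, a second application of Corollary~\ref{cor:induxplucker} at levels $p\leq k<q$ (available by the induction hypothesis), and the disjointness of $[a_p,a_q)_c$ from $(x_p,x_q]_c$, before dividing by the nonvanishing coordinates $P_{x[q-1]}$ and $P_{x[p-1]+x_q}$. These two ingredients --- the degenerate case via the stratum $\mathcal{T}^\circ_{u,x}$ and the second Corollary~\ref{cor:induxplucker} step --- are the heart of the proof and are missing from your plan.
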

\begin{proof}
    We first show $\subseteq$. This is equivalent to proving that $F_\bullet$ satisfies the equations in $\mathcal{I}_{u,v,\mathbf{a},x}$ if $F_\bullet\in \mathcal{T}_{u,v}\cap u\Omega_{id}^\circ \cap x\Omega_{id}^\circ$. There are four types of equations in $\mathcal{I}_{u,v,\mathbf{a},x}$:

\noindent\textbf{Type (1).} Here $(i,k)\in D^\downarrow_\mathbf{a}(u)$. Since $i<_{a_k}u_k$, we have $u[k-1]\cup\{i\}<_{a_k}u[k]$, by Proposition~\ref{prop:RIJadef}
        \[F_\bullet\in\pi_k^{-1}(\mathcal{R}_{u[k],v[k],a_k})\implies P_{u[k-1]+i}(F_\bullet)=0;\]

\noindent\textbf{Type (2).} Here $(i,k)\in D^\uparrow_\mathbf{a}(v)\cap D^\uparrow_\mathbf{a}(x)$. Since $i>_{a_k}v_k$ and $x_p>_{a_k}v_p$ if $k\in(p,q]$, we have $x[k-1]\cup\{i\}>_{a_k}v[k]$, and
        \[F_\bullet\in\pi_k^{-1}(\mathcal{R}_{u[k],v[k],a_k})\implies P_{x[k-1]+i}(F_\bullet)=0.\]

\noindent\textbf{Type (3a).} Here $(x_p,k)\in D^\uparrow_\mathbf{a}(v)\setminus D^\uparrow_\mathbf{a}(x)$ where $i = x_p$ and $k\in(p,q)$. Since $x_p>_{a_k}v_k$, we have $x[k-1]\cup\{x_q\}>_{a_k}v[k]$, and
        \[F_\bullet\in\pi_k^{-1}(\mathcal{R}_{u[k],v[k],a_k})\implies P_{x[k-1]+x_q}(F_\bullet)=0.\]

\noindent\textbf{Type (3b).} Here $(i,q)\in D^\uparrow_\mathbf{a}(v)\setminus D^\uparrow_\mathbf{a}(x)$ where $k = q$ and $i\in(x_p,x_q)_c$. By \eqref{eqn:incidenceplucker3}, we have
        \begin{equation}\label{eqn:Aug27aaa}
            P_{x[p-1]+i}P_{v[q]}=-\sum_{k\in[p,q]}P_{x[p-1]+v_k}P_{v[q]-v_k+i}.
        \end{equation}
The left hand side of \eqref{eqn:Aug27aaa} is $P_{x[p-1]+i}P_{v[q]} = (-1)^{\#x[q-1]>x_q}P_{x[p-1]+i}P_{x[q-1]+x_q}$.
        
        For $k=p$, the corresponding term in right hand side of \eqref{eqn:Aug27aaa} is $$-P_{x[p-1]+v_p}P_{v[q]-v_p+i} = (-1)^{\#x[q-1]>x_q}P_{x[p-1]+x_q}P_{x[q-1]+i}.$$ 
        
        For $k = q$, since $i\in (x_p,x_q)_c$ and $a_q\notin (x_p,x_q)_c$, we have $i>_{a_q}v_q$ and thus $v[q]-v_q+i>_{a_q}v[q]$. Therefore $F_\bullet\in \pi_q^{-1}(\mathcal{R}_{u[q],v[q],a_q})\implies P_{v[q]-v_q+i}(F_\bullet) = 0$ for all $i\in (x_p,x_q)_c$. 

        For $p<k<q$, if $v_k>_{a_p}v_p$, then $x[p-1]+v_k >_{a_p} v[p]$ and $P_{x[p-1]+v_k}(F_\bullet) = 0$. Similarly, if $i>_{a_q} v_k$, then $v[q]-v_k+i >_{a_q} v[q]$ and $P_{v[q]-v_k+i}(F_\bullet) = 0$. Therefore, for each $i$, 
        \begin{equation}\label{eqn:Aug27zzz}
            \sum_{k\in(p,q)}P_{x[p-1]+v_k}P_{v[q]-v_k+i} = \sum_{k\in (p,q), i\leq_{a_q} v_k \leq_{a_p} v_p} P_{x[p-1]+v_k}P_{v[q]-v_k+i}.
        \end{equation}
        Since $v_k \notin [x_p,x_q]_c$ for $k\in (p,q)$, 
        \[i\leq_{a_q} v_k \iff v_k \in (x_q,a_q)_c \text{ and }v_k \leq_{a_p} v_p \iff v_k\in [a_p,x_q)_c.\] 
        The right hand side of \eqref{eqn:Aug27zzz} equals
        \[\sum_{k:v_k\in [a_p,a_q)_c} P_{x[p-1]+v_k}P_{v[q]-v_k+i}.\]
        Since $i\notin [a_p,a_q)_c$, this is $0$ by \Cref{cor:induxplucker}. 
        Therefore
        $$F_\bullet \in \mathcal{T}_{u,v}\cap u\Omega_{id}^\circ\cap x\Omega_{id}^\circ\implies P_{x[q-1]+i}P_{x[p-1]+x_q}-P_{x[p-1]+i}P_{x[q-1]+x_q}(F_\bullet) = 0.$$
Since we have exhausted all equations in $\mathcal{I}_{u,v,\mathbf{a},x}$, we conclude the proof of $\subseteq$ direction.

We now prove the $\supseteq$ direction. Namely for $F_\bullet \in u\Omega_{id}^\circ \cap x\Omega_{id}^\circ$ and $k\in [n-1]$, if $F_\bullet \in V(\mathcal{I}_{u,v,\mathbf{a},x})$ then $F_\bullet \in \pi_k^{-1}(\mathcal{R}_{u[k],v[k],a_k})$. By the same reasoning in \Cref{prop:vanishing}, if $F_\bullet \in V(\mathcal{I}_{u,v,\mathbf{a},x})\cap u\Omega_{id}^\circ$ then $P_{K}(F_\bullet) = 0$ for all $K\in {[n]\choose k}$ such that $K\ngeq_{a_k} u[k]$. We can then focus on Equations in $\mathcal{I}_{u,v,\mathbf{a},x}$ of type $(2)$ and $(3)$ as in \Cref{def:uxplucker=0}. It is enough to show that if $F_\bullet \in x\Omega_{id}^\circ \cap V(\mathcal{I}_{u,v,\mathbf{a},x})$ then $P_K(F_\bullet) = 0$ for all $K\nleq_{a_k} v[k]$. 

We proceed by induction on $k$. For $k = 1$, $P_{i}\in \mathcal{I}_{u,v,\mathbf{a},x}\iff i>_{a_1}v_1$. Therefore $F_\bullet \in V(\mathcal{I}_{u,v,\mathbf{a},x})\implies F_\bullet \in \pi_1^{-1}(\mathcal{R}_{u_1,v_1,a_1})$. Suppose the statement holds for $k-1$, we fix $K\in {[n]\choose k}$ such that $K\nleq v[k]$ and 
consider the following two cases:

\

\noindent \textbf{Case \RNum{1}} ($k\neq q$): By \eqref{eqn:incidenceplucker1}, we have
\begin{equation}\label{eqn:Sep7aaa}
    P_{x[k-1]}P_{K} = \sum_{i\in K}P_{x[k-1]+i}P_{K-i}.
\end{equation}
If $i>_{a_k}v_k$, then $P_{x[k-1]+i}(F_\bullet) = 0$ by $(2)$ of \Cref{def:uxplucker=0}. If $i\leq_{a_k}v_k$, then $K-i\nleq_{a_k} v[k-1]$. Since $\mathbf{a}$ is flat, $K-i\nleq_{a_{k-1}} v[k-1]$ and thus $P_{K-i}(F_\bullet) = 0$ by inductive hypothesis. The right hand side of \eqref{eqn:Sep7aaa} is then $0$. Since $F_\bullet \in x\Omega_{id}^\circ$, $P_{x[k-1]}(F_\bullet) \neq 0$. Thus $P_K(F_\bullet) = 0$.



\

\noindent \textbf{Case \RNum{2}} ($k=q$):
Again by \eqref{eqn:incidenceplucker1}, 
\begin{equation}\label{eqn:Aug28aaa}
    P_{x[q-1]}P_{K} = \sum_{i\in K} P_{x[q-1]+i}P_{K-i}.
\end{equation}
If $i>_{a_q} x_q$, then either $i\in x[q-1]$ or $(i,q)\in D_{\mathbf{a}}^{\uparrow}(v)\cap D_{\mathbf{a}}^{\uparrow}(x)$. In both cases, we have $P_{x[q-1]+i}(F_\bullet) = 0$. If $i\leq_{a_q} x_p = v_q$, then $K-i \nleq v[q-1]$ and $P_{K-i}(F_\bullet) = 0$ by inductive hypothesis. The right hand side of \eqref{eqn:Aug28aaa} then equals
\begin{equation}\label{eqn:Aug28zzz}
    \sum_{i\in K\cap (x_p,x_q]_c}P_{x[q-1]+i}P_{K-i}.
\end{equation}
In order to show the above equation vanishes, we wish to multiply by $P_{x[p-1]+x_q}$ and apply the equation in $(3b)$ of \Cref{def:uxplucker=0}. However, it is not guaranteed that $P_{x[p-1]+x_q}(F_\bullet)\neq 0$ for $F_\bullet \in x\Omega_{id}^\circ\cap u\Omega_{id}^\circ \cap V(\mathcal{I}_{u,v,\mathbf{a},x})$. So we further divide into two cases.

\noindent \textbf{Case \RNum{2a}} ($P_{x[p-1]+x_q}(F_\bullet)\neq 0$): Using $(3b)$ from \Cref{def:uxplucker=0}, we have
\begin{equation}\label{eqn:Aug28bbb}
    \sum_{i\in K} P_{x[p-1]+x_q}P_{x[q-1]+i}P_{K-i} = \sum_{i\in K} P_{x[q-1]+x_q}P_{x[p-1]+i}P_{K-i}.
\end{equation}
By \eqref{eqn:incidenceplucker2}, \Cref{eqn:Aug28bbb} equals $0$. Since $P_{K-i}(F_\bullet) = 0$ for $i\in [a_q,x_p]_c$ and $P_{x[p-1]+i}(F_\bullet) = 0$ for $i\in (x_q,a_p)_c$, we have
\[\sum_{i\in K\cap([a_p,a_q)_c\cup (x_p,x_q]_c)} P_{x[q-1]+x_q}P_{x[p-1]+i}P_{K-i} = 0.\]
Since $[a_p,a_q)_c$ and $(x_p,x_q]_c$ are disjoint, by \Cref{cor:induxplucker}, we have 
\[\sum_{i\in K\cap (x_p,x_q]_c}P_{x[q-1]+x_q}P_{x[q-1]+i}P_{K-i} = 0.\]
Since $P_{x[p-1]+x_q}(F_\bullet),P_{x[q-1]}(F_\bullet)\neq 0$, by \eqref{eqn:Aug28aaa} and \eqref{eqn:Aug28zzz}, we must have $P_K(F_\bullet) = 0$. 

\noindent \textbf{Case \RNum{2b}} ($P_{x[p-1]+x_q}(F_\bullet)= 0$): For $i\in(x_p,x_q)_c$, since we have $P_{x[q-1]+i}P_{x[p-1]+x_q}=P_{x[p-1]+i}P_{x[q-1]+x_q}$, we have $P_{x[p-1]+i}(F_\bullet) = 0$. Notice by \Cref{def:plucker=0} and \Cref{def:uxplucker=0}, these are the equations in $\mathcal{I}_{u,x,\mathbf{a}}\setminus \mathcal{I}_{u,v,\mathbf{a},x}$. Therefore
$$F_\bullet\in V(\mathcal{I}_{u,x,\mathbf{a}})\cap x\Omega_{id}^\circ\cap u\Omega_{id}^\circ = \mathcal{T}_{u,x}^\circ.$$
By \Cref{lemma:uxOmega}, we have $V(\mathcal{I}_{u,v,\mathbf{a},x})\cap x\Omega_{id}^\circ\cap u\Omega_{id}^\circ\subseteq \mathcal{T}_{u,v}\cap x\Omega_{id}^\circ\cap u\Omega_{id}^\circ$ as desired.
\end{proof}

\begin{prop}\label{prop:ux}
    $\mathcal{T}_{u,v}\cap u\Omega_{id}^\circ \cap x\Omega_{id}^\circ$ is equidimensional of dimension $\ell(u,v)$. 
\end{prop}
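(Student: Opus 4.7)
The plan is to combine Krull's principal ideal theorem for a lower bound on each component's dimension with Lemma~\ref{lemma:uxOmega} and Theorem~\ref{thm:uv} for the matching upper bound. Fix a flat sequence $\mathbf{a}$ with $u\leq_{\mathbf{a}}v$ (guaranteed by Lemma~\ref{lemma:flat}). By Proposition~\ref{prop:uxvanishing}, $\mathcal{T}_{u,v}\cap u\Omega_{id}^\circ \cap x\Omega_{id}^\circ$ is the vanishing locus of the equations $\mathcal{I}_{u,v,\mathbf{a},x}$ inside the chart $u\Omega_{id}^\circ \cap x\Omega_{id}^\circ$, which is a nonempty principal open (namely, the locus $\{P_x\neq 0\}$) inside the affine space $u\Omega_{id}^\circ \cong \mathbb{C}^{\binom{n}{2}}$, hence irreducible of dimension $\binom{n}{2}$.

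The first key step is to verify the count $|\mathcal{I}_{u,v,\mathbf{a},x}| = \binom{n}{2} - \ell(u,v)$. Type~(1) contributes $|D^\downarrow_\mathbf{a}(u)|$ equations and Type~(2) contributes $|D^\uparrow_\mathbf{a}(v)\cap D^\uparrow_\mathbf{a}(x)|$ equations. A column-by-column analysis analogous to the one in the proof of Lemma~\ref{lemma:equationcount}---using $x=vt_{pq}$ and the flatness constraints $x_p\leq_{a_k} x_q$ for $k\in[p,q-1]$---shows that $D^\uparrow_\mathbf{a}(v)\setminus D^\uparrow_\mathbf{a}(x)$ consists precisely of the boxes $(x_p,k)$ for the columns $k\in(p,q)$ with $x_p\in[x_k,a_k-1]_c$ (Type~(3a)) together with the boxes $(i,q)$ for $i\in(x_p,x_q)_c$ with $x^{-1}(i)>q$ (Type~(3b)); in particular column $p$ contributes nothing. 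Hence Type~(3) produces one equation per box of $D^\uparrow_\mathbf{a}(v)\setminus D^\uparrow_\mathbf{a}(x)$, and the total is $|D^\downarrow_\mathbf{a}(u)|+|D^\uparrow_\mathbf{a}(v)|=\binom{n}{2}-\ell(u,v)$ by Lemma~\ref{lemma:equationcount}.

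Then by Krull's principal ideal theorem, every irreducible component of the vanishing locus $\mathcal{T}_{u,v}\cap u\Omega_{id}^\circ \cap x\Omega_{id}^\circ$ has dimension at least $\binom{n}{2}-(\binom{n}{2}-\ell(u,v))=\ell(u,v)$. For the matching upper bound, Lemma~\ref{lemma:uxOmega} gives the disjoint decomposition $\mathcal{T}_{u,v}\cap u\Omega_{id}^\circ \cap x\Omega_{id}^\circ = \mathcal{T}_{u,x}^\circ \sqcup (\mathcal{T}_{u,v}^\circ \cap x\Omega_{id}^\circ)$, where Theorem~\ref{thm:uv} yields $\dim \mathcal{T}_{u,x}^\circ=\ell(u,x)=\ell(u,v)-1$ and $\dim(\mathcal{T}_{u,v}^\circ \cap x\Omega_{id}^\circ)\leq \dim \mathcal{T}_{u,v}^\circ=\ell(u,v)$. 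No component can be contained in the closed piece $\mathcal{T}_{u,x}^\circ$, since that would violate the Krull lower bound; hence every component meets the open piece $\mathcal{T}_{u,v}^\circ \cap x\Omega_{id}^\circ$ and so has dimension at most $\ell(u,v)$. Equidimensionality of dimension $\ell(u,v)$ follows.

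The main obstacle is the combinatorial counting: verifying that Definition~\ref{def:uxplucker=0} yields exactly one equation per box of $D^\downarrow_\mathbf{a}(u)\sqcup D^\uparrow_\mathbf{a}(v)$ requires a careful case analysis near columns $p$ and $q$, exploiting the flatness-forced inequality $x_p\leq_{a_k}x_q$ for $k\in[p,q-1]$ to rule out extraneous boxes in column $p$ and to pin down the unique extra box $(x_p,k)$ in each middle column when it occurs. Once that count is in hand, the rest of the argument is a direct Krull-plus-decomposition pairing.
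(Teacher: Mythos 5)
Your proof is correct and follows essentially the same route as the paper's: establish the lower bound $\dim Z\geq\ell(u,v)$ for every component $Z$ via Krull's principal ideal theorem applied to the $\binom{n}{2}-\ell(u,v)$ equations of $\mathcal{I}_{u,v,\mathbf{a},x}$ inside the irreducible ambient chart $u\Omega_{id}^\circ\cap x\Omega_{id}^\circ$ (using Proposition~\ref{prop:uxvanishing}), and obtain the matching upper bound from the decomposition of Lemma~\ref{lemma:uxOmega} together with the dimension count of Theorem~\ref{thm:uv}. The paper's proof is two sentences; you spell out the one implicit input --- that $|\mathcal{I}_{u,v,\mathbf{a},x}|=\binom{n}{2}-\ell(u,v)$, i.e.\ that cases (3a)/(3b) of Definition~\ref{def:uxplucker=0} account bijectively for the boxes of $D^\uparrow_{\mathbf{a}}(v)\setminus D^\uparrow_{\mathbf{a}}(x)$, so the total is $|D^\downarrow_{\mathbf{a}}(u)|+|D^\uparrow_{\mathbf{a}}(v)|$ and Lemma~\ref{lemma:equationcount} applies --- which is a worthwhile elaboration, and your column-by-column identification of $D^\uparrow_{\mathbf{a}}(v)\setminus D^\uparrow_{\mathbf{a}}(x)$ matches Definition~\ref{def:uxplucker=0} and the usage in Proposition~\ref{prop:uxvanishing}. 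One small remark: for the upper bound you don't actually need the observation that no component sits inside $\mathcal{T}^\circ_{u,x}$; the union of two constructible sets of dimension $\leq\ell(u,v)$ already has dimension $\leq\ell(u,v)$, which bounds every component directly. Your extra step is correct and does no harm, but the shorter route is what the paper's phrasing suggests.
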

\begin{proof}
    Since $\mathcal{T}_{u,v}\cap u\Omega_{id}^\circ \cap x\Omega_{id}^\circ$ is the zero locus of ${n\choose 2}-\ell(u,v)$ equations by Proposition~\ref{prop:uxvanishing}, every irreducible component has dimension $\geq\ell(u,v)$. On the other hand, \Cref{thm:uv} and \Cref{lemma:uxOmega} implies that every irreducible component has dimension at most $\ell(u,v)$.
\end{proof}


\begin{theorem}\label{thm:closure}
    $\mathcal{T}_{u,v} = \overline{\mathcal{T}_{u,v}^\circ}$.
\end{theorem}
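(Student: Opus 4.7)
The inclusion $\overline{\mathcal{T}_{u,v}^{\circ}}\subseteq\mathcal{T}_{u,v}$ is immediate from the closedness of $\mathcal{T}_{u,v}$. For the reverse inclusion my plan is to induct on $\ell(u,v)$; the base case $u=v$ is trivial since $\mathcal{T}_{u,v}=\mathcal{T}_{u,v}^{\circ}=\{e_{u}\}$. For the inductive step I would combine \Cref{thm:Tunion} with the hypothesis for strictly smaller intervals to reduce to the two cover cases:
\begin{itemize}
\item[(I)] $\mathcal{T}_{u,x}^{\circ}\subseteq\overline{\mathcal{T}_{u,v}^{\circ}}$ whenever $x\in[u,v]$ satisfies $\ell(u,x)=\ell(u,v)-1$;
\item[(II)] $\mathcal{T}_{y,v}^{\circ}\subseteq\overline{\mathcal{T}_{u,v}^{\circ}}$ whenever $y\in[u,v]$ satisfies $\ell(y,v)=\ell(u,v)-1$.
\end{itemize}
Indeed, given (I) and (II), any proper subinterval $[x',y']\subsetneq[u,v]$ lies inside some maximal proper subinterval, which is necessarily of the form $[u,x]$ or $[y,v]$ above; the inductive hypothesis then gives $\mathcal{T}_{u,x}=\overline{\mathcal{T}_{u,x}^{\circ}}$ (resp.\ $\mathcal{T}_{y,v}=\overline{\mathcal{T}_{y,v}^{\circ}}$), and the stratification places $\mathcal{T}_{x',y'}^{\circ}$ inside $\mathcal{T}_{u,x}\subseteq\overline{\mathcal{T}_{u,v}^{\circ}}$ (resp.\ $\mathcal{T}_{y,v}\subseteq\overline{\mathcal{T}_{u,v}^{\circ}}$).

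For (I), I would exploit the decomposition from \Cref{lemma:uxOmega},
\[\mathcal{T}_{u,v}\cap u\Omega_{id}^{\circ}\cap x\Omega_{id}^{\circ}=\mathcal{T}_{u,x}^{\circ}\sqcup(\mathcal{T}_{u,v}^{\circ}\cap x\Omega_{id}^{\circ}),\]
in which $\mathcal{T}_{u,v}^{\circ}\cap x\Omega_{id}^{\circ}$ is the open complement (inside the left-hand side) of the closed piece $\mathcal{T}_{u,x}^{\circ}$. By \Cref{prop:ux} the ambient quasi-affine variety is equidimensional of dimension $\ell(u,v)$, while \Cref{thm:uv} gives $\dim\mathcal{T}_{u,x}^{\circ}=\ell(u,v)-1$. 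Hence no irreducible component of the ambient variety can lie inside the strictly lower-dimensional closed piece, forcing $\mathcal{T}_{u,v}^{\circ}\cap x\Omega_{id}^{\circ}$ to be dense in each such component. Taking closures in $\mathrm{Fl}_{n}$ gives $\mathcal{T}_{u,x}^{\circ}\subseteq\overline{\mathcal{T}_{u,v}^{\circ}\cap x\Omega_{id}^{\circ}}\subseteq\overline{\mathcal{T}_{u,v}^{\circ}}$, settling (I).

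The main obstacle will be the dual case (II): it is not literally a formal consequence of (I), and I expect it to require a mirror version of the toolkit used for (I). Concretely, for the factorization $y=ut_{p,q}$ with $\ell(u,y)=1$ I would define a counterpart $\mathcal{I}'_{u,v,\mathbf{a},y}$ of the Pl\"ucker ideal in \Cref{def:uxplucker=0}, prove the analogue of \Cref{lemma:uxOmega} asserting
\[\mathcal{T}_{u,v}\cap y\Omega_{id}^{\circ}\cap v\Omega_{id}^{\circ}=\mathcal{T}_{y,v}^{\circ}\sqcup(\mathcal{T}_{u,v}^{\circ}\cap y\Omega_{id}^{\circ}),\]
and establish the parallel equidimensionality in dimension $\ell(u,v)$ mimicking \Cref{prop:ux}. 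With these dual ingredients in place, the closure argument of (I) would transfer verbatim to deliver (II) and complete the proof.
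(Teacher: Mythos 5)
Your proposal follows the paper's argument essentially step by step: the same induction on $\ell(u,v)$, the same reduction via \Cref{thm:Tunion} to showing the two ``cover'' inclusions for coatoms $x$ and atoms $y$ of $[u,v]$, and the same dimension count for case (I) built from \Cref{lemma:uxOmega}, \Cref{prop:ux}, and \Cref{thm:uv}. The one place you go beyond the paper is in flagging the dual case (II) as a genuine, non-formal second half. The paper's printed proof of \Cref{thm:closure} spells out only case (I), with the preamble to the subsection remarking that (II) is needed ``dually'' and then declaring ``We focus on such an $x$''; the symmetric toolkit (a dual diagram $D^{\downarrow}_{\mathbf{a}}$-style ideal for the atom side, and the analogues of \Cref{lemma:uxOmega} and \Cref{prop:ux}) is left implicit. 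Your explicit observation that (II) requires a mirror version of $\mathcal{I}_{u,v,\mathbf{a},x}$ is correct and is exactly the content the paper compresses into the word ``dually''; in practice one can either redo the chart computations on $v\Omega_{id}^\circ\cap y\Omega_{id}^\circ$ directly or deduce (II) from (I) via the flag-variety automorphism $F_\bullet\mapsto w_0F_\bullet$, which interchanges the roles of $u$ and $v$ in all of these constructions. So there is no gap in your argument; if anything, your treatment is slightly more scrupulous than the published one.
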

\begin{proof}
    We prove by induction on $\ell(u,v)$. For $\ell(u,v)=0$, $u=v$ and $\mathcal{T}_{u,u} = \mathcal{T}_{u,u}^\circ = \{e_u\}$. Now assume $\ell(u,v)>0$. By Theorem~\ref{thm:Tunion}, it is enough to show that $\mathcal{T}_{u,x}^\circ \subseteq \overline{\mathcal{T}_{u,v}^\circ}$ for all $x\in [u,v]$ such that $\ell(u,x) = \ell(u,v)-1$. Pick any such $x$, by \Cref{lemma:uxOmega}, we have
    \[\overline{\mathcal{T}_{u,v}\cap u\Omega_{id}^\circ\cap x\Omega_{id}^\circ} = \overline{\mathcal{T}_{u,x}^\circ} \cup \overline{\mathcal{T}^\circ_{u,v}\cap x\Omega_{id}^\circ}\]
    as quasi-affine varieties in $u\Omega_{id}^\circ\cap x\Omega_{id}^\circ$.
    Let $Z\subseteq \overline{\mathcal{T}_{u,v}\cap u\Omega_{id}^\circ \cap x\Omega_{id}^\circ}$ be any irreducible component. Since $\dim(Z) = \ell(u,v)>\dim(\overline{\mathcal{T}_{u,x}^\circ})$ by Proposition~\ref{prop:ux}, $Z \nsubseteq \overline{\mathcal{T}_{u,x}^\circ}$. Since $Z$ is irreducible, $Z\subseteq \overline{\mathcal{T}^\circ_{u,v}\cap x\Omega_{id}^\circ}$, and since this holds for all irreducible components $Z$,
    $\overline{\mathcal{T}_{u,v}\cap u\Omega_{id}^\circ\cap x\Omega_{id}^\circ} \subseteq \overline{\mathcal{T}^\circ_{u,v}\cap x\Omega_{id}^\circ}$. Thus, $\mathcal{T}^\circ_{u,x}\subseteq \overline{\mathcal{T}^\circ_{u,x}}\subseteq\overline{\mathcal{T}^\circ_{u,v}\cap x\Omega_{id}^\circ}\subseteq\overline{\mathcal{T}_{u,v}^\circ}$ as desired. 
\end{proof}


\section*{Acknowledgements}
We are grateful to Alex Postnikov for introducing us to quantum Bruhat graphs. We thank Anders Buch, Allen Knutson, Thomas Lam, Leonardo Mihalcea, Lauren Williams, Weihong Xu and Alexander Yong for many helpful conversations. SG is partially supported by an NSF Graduate Research Fellowship under grant No. DGE-1746047.

\bibliographystyle{plain}
\bibliography{ref}

\begin{thebibliography}{10}

\bibitem{bjorner1984orderings}
Anders Bj\"{o}rner.
\newblock Orderings of {C}oxeter groups.
\newblock In {\em Combinatorics and algebra ({B}oulder, {C}olo., 1983)},
  volume~34 of {\em Contemp. Math.}, pages 175--195. Amer. Math. Soc.,
  Providence, RI, 1984.

\bibitem{BFP-tilted-Bruhat}
Francesco Brenti, Sergey Fomin, and Alexander Postnikov.
\newblock Mixed {B}ruhat operators and {Y}ang-{B}axter equations for {W}eyl
  groups.
\newblock {\em Internat. Math. Res. Notices}, (8):419--441, 1999.

\bibitem{curveneighborhood}
Anders~S. Buch, Pierre-Emmanuel Chaput, Leonardo~C. Mihalcea, and Nicolas
  Perrin.
\newblock Finiteness of cominuscule quantum {$K$}-theory.
\newblock {\em Ann. Sci. \'{E}c. Norm. Sup\'{e}r. (4)}, 46(3):477--494, 2013.

\bibitem{BCMP}
Anders~S. Buch, Pierre-Emmanuel Chaput, Leonardo~C. Mihalcea, and Nicolas
  Perrin.
\newblock Projected {G}romov-{W}itten varieties in cominuscule spaces.
\newblock {\em Proc. Amer. Math. Soc.}, 146(9):3647--3660, 2018.

\bibitem{bclm2020-quantumK}
Anders~S. Buch, Sjuvon Chung, Changzheng Li, and Leonardo~C. Mihalcea.
\newblock Euler characteristics in the quantum {$K$}-theory of flag varieties.
\newblock {\em Selecta Math. (N.S.)}, 26(2):Paper No. 29, 11, 2020.

\bibitem{BM15}
Anders~S. Buch and Leonardo~C. Mihalcea.
\newblock Curve neighborhoods of {S}chubert varieties.
\newblock {\em J. Differential Geom.}, 99(2):255--283, 2015.

\bibitem{FW}
W.~Fulton and C.~Woodward.
\newblock On the quantum product of {S}chubert classes.
\newblock {\em J. Algebraic Geom.}, 13(4):641--661, 2004.

\bibitem{youngtableaux}
William Fulton.
\newblock {\em Young tableaux}, volume~35 of {\em London Mathematical Society
  Student Texts}.
\newblock Cambridge University Press, Cambridge, 1997.
\newblock With applications to representation theory and geometry.

\bibitem{KLSjuggling}
Allen Knutson, Thomas Lam, and David~E. Speyer.
\newblock Positroid varieties: juggling and geometry.
\newblock {\em Compos. Math.}, 149(10):1710--1752, 2013.

\bibitem{LiMihalcea}
Changzheng Li and Leonardo~C. Mihalcea.
\newblock K-theoretic {G}romov-{W}itten invariants of lines in homogeneous
  spaces.
\newblock {\em Int. Math. Res. Not. IMRN}, (17):4625--4664, 2014.

\bibitem{Postnikov-quantum-Schur}
Alexander Postnikov.
\newblock Affine approach to quantum {S}chubert calculus.
\newblock {\em Duke Math. J.}, 128(3):473--509, 2005.

\bibitem{Postnikov-quantum-Bruhat-graph}
Alexander Postnikov.
\newblock Quantum {B}ruhat graph and {S}chubert polynomials.
\newblock {\em Proc. Amer. Math. Soc.}, 133(3):699--709, 2005.

\bibitem{WooYong}
Alexander Woo and Alexander Yong.
\newblock A {G}r\"{o}bner basis for {K}azhdan-{L}usztig ideals.
\newblock {\em Amer. J. Math.}, 134(4):1089--1137, 2012.

\end{thebibliography}
\end{document}